\newtheorem{theorem}{Theorem}[section]
\newtheorem{conj}[theorem]{Conjecture}
\newtheorem{cor}[theorem]{Corollary}
\newtheorem{lem}[theorem]{Lemma}
\theoremstyle{definition}
\newtheorem{beisp}[theorem]{Example}
\newtheorem{definit}[theorem]{Definition}
\newtheorem*{rem*}{Remark}
\DeclareMathOperator{\out}{Out}
\DeclareMathOperator{\Inn}{Inn}
\DeclareMathOperator{\aut}{Aut}
\DeclareMathOperator{\id}{id}
\DeclareMathOperator{\So}{SO}
\DeclareMathOperator{\Gl}{GL}
\DeclareMathOperator{\Sl}{SL}
\DeclareMathOperator{\oo}{O}
\DeclareMathOperator{\Go}{GO}
\DeclareMathOperator{\Span}{Span}
\DeclareMathOperator{\M}{M}
\DeclareMathOperator{\chr}{Char}
\renewcommand{\epsilon}{\varepsilon}
\renewcommand{\phi}{\varphi}
\renewcommand{\rho}{\varrho}
\renewcommand{\theta}{\vartheta}
\renewcommand{\le}{\leqslant}
\renewcommand{\ge}{\geqslant}
\title[Isomorphy Classes of $k$-involutions of $\So(n, k,\beta)$, $n > 2$]{\boldmath
Isomorphy Classes of $k$-involutions of $\So(n, k,\beta)$, $n > 2$}
\author{Robert W. Benim}
\address {Department of Mathematics and Computer Science\\
Pacific University\\
Forest Grove, OR, 97116}
\email{rbenim@gmail.com}
\author{Christopher E. Dometrius}
\address{Department of Mathematics\\
Lenoir-Rhyne University\\
Hickory, N. C., 28601} 
\email{chris.dometrius@lrc.edu}
\author{Aloysius G. Helminck}
\thanks{Third author is partially supported by N.S.F. Grant DMS-1063010 and NSA grant H98230-10-1-0252}
\address{Department of Mathematics\\
North Carolina State University\\
Raleigh, N. C., 27695} \email{loek@math.ncsu.edu}
\author{Ling Wu}
\address {Department of Mathematics\\
North Carolina State University\\
Raleigh, N. C., 27695}
\email{ling\_wu@hotmail.com}
\subjclass{14M15, 20G05} \keywords{flag variety, symmetric subgroup}
\begin{document}
\maketitle

\begin{abstract}
A first characterization of the isomorphism classes of $k$-involutions for any reductive algebraic group defined over a perfect field was given in \cite{Helm2000} using $3$ invariants.  In \cite{HWD04,Helm-Wu2002} a full classification of all $k$-involutions on $\Sl(n,k)$ for $k$ algebraically closed, the real numbers, the $p$-adic numbers or a finite field was provided. In this paper, we find analogous results to develop a detailed characterization of the $k$-involutions of $\So(n,k,\beta)$, where $\beta$ is any non-degenerate symmetric bilinear form and  $k$ is any field not of characteristic $2$. We use these results to classify the isomorphy classes of $k$-involutions of $\So(n, k,\beta)$ for some bilinear forms and some fields $k$.

\end{abstract}

\section{Introduction}
Let $G$ be a connected reductive algebraic group defined over a
field $k$ of characteristic not $2$, $\theta$ a $k$-involution of $G$, i.e. a 
$k$-automorphism of $G$ of order two, 
$H$ a $k$-open subgroup of the fixed point group
of $\theta$ and $G_k$ (resp. $H_k$) the set of $k$-rational points
of $G$ (resp. $H$). The variety $G/H$ is called a symmetric variety and $G_k/H_k$ is called a symmetric
$k$-variety. 
For $k$ the real numbers or the $p$-adic numbers, the
symmetric $k$-varieties are also called reductive (real or $p$-adic) symmetric spaces or simply (real or $p$-adic)
symmetric spaces. 
Symmetric spaces play an important role in many areas of mathematics and physics, but probably best known are the representations associated with these symmetric spaces which have been studied  by many prominent mathematicians starting with a study of compact groups and their representations by Cartan \cite{Cartan29}, to a study of Riemannian symmetric spaces and real (and $p$-adic) groups by Harish Chandra \cite{Harish84} to a more recent study of the non Riemannian symmetric spaces (see for example \cite{Faraut79,Flensted-Jensen80,Oshima-Sekiguchi80,Ban-Schlichtkrull97,Ban-Schlichtkrull97b}) leading to a Plancherel formula in 1996 by Delorme \cite{Delorme96}. Once this Plancherel formula was obtained the attention shifted to $p$-adic symmetric spaces (see for example \cite{Delorme-Pascale14,Carmona-Delorme14,Delorme13,Helm-Helm02b,Helm-Helm05}).  
In the late 1980's generalizations of these  reductive symmetric spaces to other base fields started to play a role in other areas,  like in the study of arithmetic subgroups  (see \cite{Tong-Wang}),  the study of character sheaves (see for example 
\cite{Lusztig90a,Grojnowski92}), geometry (see
\cite{Procesi-Concini83,Procesi-Concini85} and
\cite{abeasis}), singularity theory (see \cite{Lusztig-Vogan83} and
\cite{Hirzebruch-Slodowy90}),   and the study of Harish Chandra modules (see
\cite{Beilinson-Bernstein81} and \cite{Vogan83,Vogan82}).  This prompted  Helminck and Wang to commence a study of rationality properties of  symmetric $k$-varieties over general base fields, see \cite{Helm-Wang93} for some first results.

To study these symmetric $k$-varieties for any field $k$, one needs  a classification. The isomorphy of these symmetric $k$-varieties can be reduced to the $\Inn (G,G_k)$-isomorphy  of the related  $k$-involutions (see \cite{Helm-Wang93}). Here $\Inn (G,G_k)$ is the set of inner automorphisms of $G$ that leave $G_k$ invariant. 
A characterization of these isomorphism
classes of the $k$-involutions was given in \cite{Helm2000}
essentially using the following 3 invariants:
\begin{enumerate}
\item classification of admissible $(\Gamma, \theta)$-indices.
\item classification of the $G_k$-isomorphism classes of
$k$-involutions of
 the $k$-an\-iso\-tro\-pic kernel of $G$.
\item classification of the $G_k$-isomorphism classes of $k$-inner
elements of $G$.
\end{enumerate}
For more details, see \cite{Helm2000}. The admissible $(\Gamma,
\theta)$-indices determine most of the fine structure of the
symmetric $k$-varieties and a classification of these was included
in \cite{Helm2000} as well. For $k$ algebraically closed or $k$ the
real numbers the full classification can be found in \cite{Helm88}.
For other fields a full classification of the remaining two invariants is
still lacking. In particular the case of symmetric $k$-varieties
over the $p$-adic numbers is of interest. We note that
the above characterization was only proven for $k$ a perfect field.

In \cite{HWD04,Helm-Wu2002} a full characterization of the $\Inn (G,G_k)$-isomorphism
classes of $k$-involutions was given in the case that $G=\Sl(n, k)$  which 
does not depend on any of the results in
\cite{Helm2000}.  It was also shown how one may construct an outer-$k$-involution from a given non-degenerate symmetric or skew-symmetric bilinear form $\beta$ of $k^n$.  Using this characterization the
possible isomorphism classes for $k$ algebraically
closed, the real numbers, the $p$-adic numbers and finite
fields were classified.

We note that much work has been done is characterizing automorphisms of $\oo(n,k,\beta)$, notably the works of Diedonne in \cite{Die51} and \cite{Die63}, Rickart in \cite{Ric50} and \cite{Ric51}, and Wonenburger in \cite{Won62}. These results consider automorphisms of $G_k$ under the action of the full automorphism group of $G_k$, so these results do not directly apply to the problem of isomorphy of $k$-involutions under the  action of the subgroup $\Inn (G,G_k)$ of $\aut (G_k)$ as considered in this paper. 

In this paper we study $k$-involutions of $\So(n, k, \beta)$, the special orthogonal group with respect to a symmetric bilinear form $\beta$ on $k^n$.  We give a characterization 
of the isomorphy classes of $k$-involutions of $\So(n, k, \beta)$, which come from inner automorphisms $\Inn_A$ of the general linear group $\Gl (n,\overline{k})$.  In section 2, we state many of the important definitions and initial observations. 
In section 3 we show that if a $k$-automorphism $\theta= \Inn_A$ where $A \in \Gl (n,\overline{k})$ leaves $\So(n,k,\beta)$ invariant, then 
we can assume $A$ is in $\So(n,k[\sqrt{\alpha}],\beta)$ when $n$ is odd, and  $A$ is in $\oo(n,k[\sqrt{\alpha}],\beta)$ when $n$ is even, where $k[\sqrt{\alpha}]$ is a quadratic extension of $k$. Further, we show that each entry of $A$ must be a $k$-multiple of $\sqrt{\alpha}$. To prove these results, we require that either $\chr(k) \ne 3$, or we require a restriction on the bilinear form $\beta$. In section 4, we have the main results of the paper. We determine which $A \in \So(n,k[\sqrt{\alpha}],\beta)$ if $n$ is odd and which $A \in \oo(n,k[\sqrt{\alpha}],\beta)$ if $n$ is even induce $k$-involutions of $\So(n,k,\beta)$ of the form $\Inn_A$. We will see that when $n$ is odd, there is only one type of these $k$-involutions, and if $n$ is even, there are four types of these $k$-involutions. For each type, we determine nice conditions that are equivalent to isomorphy of these $k$-involutions over $\So(n,k,\beta)$ if $n$ is odd and over $\oo(n,k,\beta)$ if $n$ is even. When $n \ne 3, 4, 6, 8$, the $k$-involutions of the form $\Inn_A$ will be all of the $k$-involutions of $\So(n,k,\beta)$. In section 5, we discuss the maximal number of possible isomorphy classes of $k$-involutions of the form $\Inn_A$. In section 6 we look at some explicit examples of orthogonal groups, most of which are the standard orthogonal group, when $k$ is algebraically closed, the real numbers, a finite field of order odd $q= p^m$ where $p>2$, or the $p$-adic numbers

\section{Preliminaries}
Our basic reference for
reductive groups will be the papers of Borel and Tits
\cite{Borel-Tits65}, \cite{Borel-Tits72} and also the books of
Borel \cite{Borel91},  Humphreys \cite{Humph75} and Springer
\cite{Spring81}. We shall follow their notations and terminology.
All algebraic groups and algebraic varieties are  taken over an
arbitrary field $k$ (of characteristic $\neq 2$) and all algebraic
groups considered are linear algebraic groups.

Our main reference for results regarding $k$-involutions of $\Sl(n,k)$ will be \cite{HWD04}.  Let
$k$ be a field of characteristic not $2$, $\bar k$ the algebraic closure of $k$,
$$\M(n,k)=\{ n\times n \text{-matrices with
entries in $k$} \}, $$
$$\Gl(n,k)= \{ A\in \M(n,k)\mid \det
(A)\neq 0\}$$  and
$$\Sl(n,k)= \{ A\in \M(n,k)\mid \det
(A)=1\}. $$  Let $k^*$ denote the multiplicative group of all the nonzero
elements, $(k^*)^2=\{a^2\mid a\in k^*\}$ and $I_n \in \M(n,k)$
denote the identity matrix. We will sometimes use $I$ instead of $I_n$ when the dimension of the identity matrix is clear.

We recall some important definitions and theorems from \cite{HWD04}.
\begin{definit}
\label{isoinv} 

Let $G$ be an algebraic groups defined over a field $k$. Let $G_k$ be the $k$-rational points of $G$. Let $\aut(G, G_k)$ denote the the set of $k$-automorphisms of $G_k$. That is, $\aut(G, G_k)$ is the set of automorphisms of $G$ which fix $G_k$. We say $\theta \in \aut(G,G_k)$ is a {\it $k$-involution} if $\theta^2 = \id$ but $\theta \ne \id$. That is, a $k$-involution is a $k$-automorphism of order 2.

Choose $A \in G_k$. Then the map $\Inn_A(X) = A^{-1}XA$ is called an {\it inner $k$-automorphism of $G_k$}. We denote the set of such $k$-automorphisms by $\Inn(G_k)$. If $\Inn_A \in \Inn(G_k)$ is a $k$-involution, then we say that $\Inn_A$ is an {\it inner $k$-involution of $G_k$}.

Assume $H$ is an algebraic group defined over $k$ which contains $G$. Let $H_k$ be the $k$-rational points of $H$. Choose $A \in H$. If the map $\Inn_A(X) = A^{-1}XA$ is such that $\Inn_A \in \aut(G,G_k)$, then we say that $\Inn_A$ is an {\it inner $k$-automorphism of $G_k$ over $H$}. We denote the set of such $k$-automorphisms by $\Inn(H,G_k)$. If $\Inn_A \in \Inn(H,G_k)$ is a $k$-involution, then we say that $\Inn_A$ is an {\it inner $k$-involution of $G_k$ over $H$}.

Suppose $\theta, \tau \in \aut((G,G_K)$.We say that $\theta$ is {\it isomorphic} to $\tau$ {\it over $H_k$} if there is $\phi$ in ${\Inn(H_k)}$ such that $\tau=\phi ^{-1}\theta\phi$. Equivalently,  we say that $\tau$ and $\theta$ are in the same \textit{isomorphy class over $H_k$}.

\end{definit}

In \cite{HWD04}, the isomorphy classes of the inner-$k$-involutions of $\Sl(n,k)$ were classified, and they are as follows:

\begin{theorem} \label{sltheorem1}
Suppose the $k$-involution $\theta \in \aut(\Sl(n,k))$ is of inner type. Then
up to isomorphism over $\Gl(n,k)$, $\theta$ is one of the following:
\begin{enumerate}
\item \label{sltheorem1.1} $\Inn_Y|_G$, where $Y = I_{n-i,i} \in
\Gl(n,k)$ where $i \in \left\{1, 2, \dots, \lfloor  \frac
{n}{2}\rfloor \right\}$ where $$I_{n-i,i} =  \left(\begin{array}{cc}I_{n-i} & 0 \\0 & -I_i\end{array}\right)$$. 

\item \label{sltheorem1.2} $\Inn_Y|_G$,
where
$Y =  L_{\frac n 2, x} \in \Gl(n,k)$ where $x$ is a fixed element of a coset of $k^*\slash
k^{*2}$, $x \not\equiv 1\mod k^{*2}$ and
$$L_{\frac{n}{2},x}=\begin{pmatrix} 0 & 1 & \hdots & 0 & 0 \\
x & 0 & \hdots & 0 & 0
\\ \vdots & \vdots & \ddots &
\vdots & \vdots \\ 0 & 0 & \hdots & 0 & 1\\ 0 & 0 & \hdots & x & 0
\end{pmatrix} .$$
 \end{enumerate}
Note that $(ii)$ can only occur when $n$ is even.
\end{theorem}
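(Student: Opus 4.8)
The plan is to recast the problem as a question in linear algebra over $k$, solved by rational canonical form. First I would note that a $k$-involution of inner type is represented as $\Inn_A$ for some $A\in\Gl(n,\bar k)$, and that being defined over $k$ makes the $\bar k^*$-coset $\bar k^*A$ of representatives Galois-stable; a standard descent (Hilbert~90 / Speiser, since $H^1(\gal(\bar k/k),\Gl_n)=0$) then produces a representative $A\in\Gl(n,k)$, unique up to the scalar ambiguity $\Inn_A=\Inn_{\lambda A}$, $\lambda\in k^*$. Next I would record the bookkeeping identity: for $\phi=\Inn_P$ with $P\in\Gl(n,k)$ one has $\phi^{-1}\Inn_A\phi=\Inn_{PAP^{-1}}$, so that isomorphy of inner involutions over $\Gl(n,k)$ corresponds exactly to $\Gl(n,k)$-conjugacy of the representing matrices, taken modulo scalars. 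Finally, since $\Inn_A^2=\Inn_{A^2}=\id$ on $\Sl(n,k)$ forces $A^2$ to be central, the involution condition becomes $A^2=cI_n$ for some $c\in k^*$ with $A$ not itself scalar. The whole classification thus reduces to listing, up to $\Gl(n,k)$-conjugacy and scaling, the matrices $A$ satisfying $A^2=cI_n$, split according to whether $c\in(k^*)^2$.

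If $c\in(k^*)^2$, I would rescale $A$ so that $A^2=I_n$. Since $\cha k\ne 2$, the polynomial $t^2-1=(t-1)(t+1)$ has distinct roots in $k$, so $A$ is diagonalizable over $k$ with eigenvalues $\pm 1$; its conjugacy class is determined by the multiplicity $i$ of the eigenvalue $-1$, giving $A\sim\diag(I_{n-i},-I_i)=I_{n-i,i}$. Because $\Inn_A=\Inn_{-A}$ and $-I_{n-i,i}=I_{i,n-i}$, I may assume $i\le\lfloor n/2\rfloor$, with $i\ge 1$ since $A$ is not scalar; this produces family (i).

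If $c\notin(k^*)^2$, then $t^2-c$ is irreducible over $k$ and must be the minimal polynomial of $A$ (a linear minimal polynomial would make $A$ scalar). Viewing $k^n$ as a module over $k[t]/(t^2-c)=k[\sqrt{c}]$, with $A$ acting as multiplication by $\sqrt{c}$, forces $n$ to be even, and the structure theorem gives $A\sim L_{\frac n2,c}$ as a direct sum of $n/2$ companion blocks of $t^2-c$. Rescaling $A$ by $s$ replaces $c$ by $s^2c$, so the genuine invariant is the class of $c$ in $k^*/k^{*2}$; setting $x\equiv c$ gives family (ii), which occurs only for $n$ even and with $x\not\equiv 1\bmod k^{*2}$, since $x\equiv 1$ returns us to the square case.

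To finish, I would check the list is irredundant. Within family (i) the members are separated by the invariant $i=\dim\ker(A+I_n)$ once $i\le\lfloor n/2\rfloor$ is imposed; within family (ii) they are separated by the class of $x$ in $k^*/k^{*2}$, which is exactly the scaling-invariant recording $A^2=xI_n$; and the two families are disjoint because the eigenvalues of $A$ lie in $k$ in (i) but not in (ii). The step I expect to be the main obstacle is the descent producing a $k$-rational representative $A$, together with the careful tracking, in the non-square case, of how scaling interacts with the $k^*/k^{*2}$ invariant and with the identification of the companion-block normal form $L_{\frac n2,x}$.
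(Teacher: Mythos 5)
A preliminary point: the paper does not prove this theorem at all; it is quoted from \cite{HWD04}, and the nearest thing to a proof inside the paper is the analogous analysis it carries out for $\So(n,k,\beta)$ in Section 3 (the step-by-step computation of Theorem \ref{CharThm2So}, resting on Lemma \ref{IdentityLem}). Measured against that method, your proposal is correct in substance but takes a genuinely different route in its first half. Where \cite{HWD04} (like this paper in the orthogonal case) shows by elementary entry-by-entry computation --- feeding explicit elements of the group into $\Inn_A$ --- that $A$ can be rescaled to lie in $\Gl(n,k)$, you obtain the $k$-rational representative by Galois descent. From that point on the arguments coincide: $A^2=cI_n$ by a centralizer argument, and rational canonical form splits the classification by the square class of $c$, giving $I_{n-i,i}$ with $1\le i\le\lfloor n/2\rfloor$ when $c$ is a square, and the companion-block matrix $L_{\frac n2,x}$ (forcing $n$ even) when it is not. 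Your bookkeeping of the scalar ambiguity $\Inn_A=\Inn_{\lambda A}$ and of the invariants $i$ and $x\in k^*/(k^*)^2$, including the irredundancy check, is correct; this half of your proof needs no repair.

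The descent step, however, has two genuine problems as written, and they matter because the theorem is meant to cover (and the paper later uses it for) finite fields. First, the cohomological input you cite is the wrong one. The coset $\overline{k}^{\,*}A$ is a torsor under the scalars $\Gl_1$, not under $\Gl_n$: one must first know that $\sigma(A)A^{-1}$ is a scalar matrix, which is Schur's lemma applied to the natural representation of $\Sl(n,k)$ on $\overline{k}^n$ (the $\Sl$-analogue of Lemma \ref{IdentityLem}; unlike the orthogonal case it needs no ``friendly'' hypothesis, but it must be stated --- it is also what you are invoking when you declare $A^2$ central and when you identify isomorphy with conjugacy modulo scalars). The vanishing you then need is Hilbert~90 for $\Gl_1$; trivializing the cocycle $\sigma\mapsto\sigma(A)A^{-1}$ via $H^1(\gal,\Gl_n)=0$ (Speiser), as you propose, would modify $A$ by a non-scalar matrix and the modified matrix would no longer induce the automorphism $\theta$. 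Second, your justification that the coset is Galois-stable (``being defined over $k$'') tacitly identifies $\theta$ with a morphism defined over $k$; that deduction uses Zariski density of $\Sl(n,k)$, so it is unavailable when $k$ is finite, and for non-perfect $k$ one must also replace $\overline{k}$ by the separable closure. The repair is to argue on $k$-points directly: since $\theta$ maps $\Sl(n,k)$ onto itself, $\Inn_{\sigma(A)}$ and $\Inn_A$ agree on $\Sl(n,k)$, hence $\sigma(A)A^{-1}$ centralizes $\Sl(n,k)$, and absolute irreducibility of $\Sl(n,k)$ acting on $\overline{k}^n$ (a transvection argument, valid for every $k$) again makes it scalar. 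With these two corrections your argument is complete and covers the full generality of the statement.
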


For the purposes of this paper, we will use matrices of the form 
$\left(\begin{smallmatrix}0 & I_{\frac{n}{2}} \\xI_{\frac{n}{2}} & 0\end{smallmatrix}\right)$ 
(and their multiples) rather than $L_{\frac n 2, x}$. Either of these serves as a member of the isomorphy class listed in the previous theorem. It will become apparent that the isomorphy classes of $k$-involutions $\So(n,k,\beta)$ over $\oo(n,k,\beta)$ are just isomorphy classes of $\Sl(n,k)$ over $\Gl(n,k)$ that have been divided into multiple isomorphy classes.

We now begin to define the notion of a special orthogonal group. Let $M$ be the matrix of a non-degenerate bilinear form $\beta$
over $k^n$ 
with respect to a basis $\{ e_1, \dots e_n \}$ of $k^n$.
We will say that $M$ is the matrix of $\beta$ if 
the basis $\{ e_1, \dots e_n \}$ 
is the standard basis of $k^n$. 

The typical notation for the orthogonal group is $\oo(n,k)$, which is the group $$\oo(n,k)= \{ A\in \M(n,k)\mid (Ax)^T(Ay) = x^Ty\}.$$ This group consists of the matrices which fix the standard dot product. This can be generalized to any non-degenerate symmetric bilinear $\beta$, which will yield the group $$\oo(n,k,\beta)= \{ A\in \M(n,k)\mid \beta(Ax,Ay) = \beta(x,y) \}.$$ If $M$ is the matrix of $\beta$ with respect to the standard basis, then we can equivalently say $$\oo(n,k,\beta)= \{ A\in \M(n,k)\mid A^TMA = M \}.$$ It is clear from this definition that all matrices in $\oo(n,k,\beta)$ have determinant 1 or -1. We define the {\it special orthogonal group of $\beta$} to be the group $$\So(n,k, \beta)= \oo(n,k, \beta) \cap \Sl(n,k),$$ and we define the {\it group of similitudes of $\beta$} to be the group $$\Go(n,k,\beta) = \{ A\in \M(n,k)\mid \beta(Ax,Ay) = \alpha \beta(x,y), \alpha \in k^* \}.$$ We note that $$\Go(n,k,\beta)= \{ A\in \M(n,k)\mid A^TMA = \alpha M, \alpha \in k^* \}.$$

We say two $n \times n$ matrices $A$ and $B$ are considered {\it congruent} over $k$ if there exists $Q \in \Gl(n,k)$ such that $Q^TAQ = B$. We also say that $A$ and $B$ are {\it congruent} via $Q$.

We note a couple of important facts, the first of which will be used repeatedly throughout this paper. 
\begin{enumerate}

\item Symmetric matrices are congruent to diagonal matrices when $\chr(k) \ne 2$, where the entries of the diagonal matrix are are representatives of the cosets of $k^*/(k^*)^2.$

\item If $\beta_1$ and $\beta_2$ correspond to $M_1$ and $M_2$, then $\So(n,k,\beta_1)$ and $\So(n,k,\beta_2)$ are isomorphic via 
$$\Phi:\So(n,k,\beta_1) \rightarrow \So(n,k,\beta_2): X \rightarrow Q^{-1}XQ$$ for some $Q \in \Gl(n,k)$ if and only if $Q^TM_1Q = M_2$ ($M_1$ and $M_2$ are congruent via $Q$).

\end{enumerate}

So, we will assume that $\beta$ is such that $M$ is diagonal. Then, to classify the $k$-involutions of an orthogonal group where $M$ is not diagonal, one can apply the characterization that will follow by simply using the isomorphism $\Phi$ given above.

We say two vectors $x,y\in k^n$ are said to be \textit{orthogonal} with respect to the bilinear form $\beta$ if $\beta(x,y)=0$. We will eventually see that orthogonal vectors play an important role in the structure of $k$-involutions of $\So(n,k,\beta)$. 

Lastly, we will always assume, whether stated or not, that $n > 2$ and that $\chr(k) \ne 2$.

\section{$k$-automorphisms of $\So(n,k,\beta)$}

In this paper, we consider the $k$-involutions that lie in the group $\Inn(\Gl(n,\overline{k}),\So(n,k,\beta))$. When $n$ is odd, this group turns out to be $\Inn(\So(n,\overline{k},\beta),\So(n,k,\beta))$, and when $n$ is even, this group turns out to be $\Inn(\oo(n,\overline{k},\beta),\So(n,k,\beta))$. We will see that when $n$ is odd and $n \ne 3$, that all of the $k$-involutions of $\So(n,k,\beta)$ will be inner $k$-involutions of $\So(n,k,\beta)$ over $\So(n, \overline{k}, \beta)$, and when $n$ is even and $n \ne 4, 6, 8,$, that all of the $k$-involutions of $\So(n,k,\beta)$ will be inner $k$-involutions of $\So(n,k,\beta)$ over $\oo(n, \overline{k}, \beta)$.  We will show isomorphy conditions of these $k$-involutions over $\So(n,k,\beta)$ when $n$ is odd, and over $\oo(n,k,\beta)$ when $n$ is even.

It follows from a proposition on page 191 of \cite{Borel91} that the outer $k$-automorphism group $$\out(\So(n,\overline{k},\beta)) = \aut(\So(n,\overline{k},\beta)) /\Inn(\So(n,\overline{k},\beta))$$ must be a subgroup of the diagram automorphisms of the associated Dynkin diagram. If $n = 2m+1$ and $m \ge 2$, then this Dynkin diagram is $B_m$ which has only the trivial diagram automorphism. Thus, there are no outer $k$-automorphisms of $\So(n,\overline{k},\beta)$ when $n$ is odd and $n \ge 5$. If $n = 2m$ and $m \ge 4$, then this Dynkin diagram is $D_m$. The group of automorphisms of this Dynkin diagram is $\mathbb{Z}_2$ when $m > 4$. So, when $n$ is even and $n \ge 10$, $\out(\So(n,\overline{k},\beta)) = \mathbb{Z}_2$. We will see that the outer $k$-automorphisms are of the form $\Inn_A$ where $A \in \oo(n,\overline{k},\beta)$ and $\det(A) = -1$. When $k$ is not algebraically closed, then all $k$-automorphisms of $\So(n,k,\beta)$ will still be of the form $\Inn_A$ for some $A \in \oo(n,\overline{k},\beta)$ since $\Inn_A$ must also be an $k$-automorphism of $\So(n,\overline{k},\beta).$ Thus, the characterizations that follow in this paper consider all $k$-automorphisms and $k$-involutions of $\So(n,k,\beta)$, assuming that $n \ne 3, 4, 6, 8$. In the cases where $n = 3, 4, 6, 8$, the results that follow only consider the $k$-automorphisms and $k$-involutions that can be written as $\Inn_A$ for some $A \in \Gl(n,\overline{k})$.

We now examine which $k$-automorphisms will act as the identity on $\So(n,k, \beta)$. This will prove to be useful when we classify matrix representatives for $k$-automorphisms. 

\begin{lem}
Assume $n > 2$. Let $A \in \Gl(n,\overline{k})$. If $\Inn_A$ is the identity on $\So(n,k, \beta)$, then $A $ is a diagonal matrix.
\end{lem}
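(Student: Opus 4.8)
The statement $\Inn_A|_{\So(n,k,\beta)} = \id$ unwinds to $A^{-1}XA = X$, i.e. $AX = XA$, for every $X \in \So(n,k,\beta)$; so the plan is to show that any element of $\Gl(n,\overline{k})$ centralizing the subgroup $\So(n,k,\beta)$ must be diagonal. Because we are operating under the standing hypothesis that the matrix $M$ of $\beta$ is diagonal, say $M = \diag(d_1,\dots,d_n)$, there is a convenient supply of test elements inside $\So(n,k,\beta)$: for each pair of distinct indices $i,j$ let $D_{ij}$ be the diagonal matrix whose $(i,i)$ and $(j,j)$ entries equal $-1$ and all of whose remaining diagonal entries equal $1$. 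Since $D_{ij}$ is diagonal with entries $\pm 1$ and $M$ is diagonal, $D_{ij}^{\,T} M D_{ij} = M$, while $\det(D_{ij}) = 1$, so $D_{ij} \in \So(n,k,\beta)$; its entries lie in $k$, so it is genuinely a $k$-rational point.

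First I would record the elementary fact that if $A$ commutes with a diagonal matrix $D = \diag(\delta_1,\dots,\delta_n)$, then comparing the $(\ell,m)$ entries of $AD$ and $DA$ gives $A_{\ell m}(\delta_m - \delta_\ell) = 0$; hence $A_{\ell m} = 0$ whenever $\delta_\ell \ne \delta_m$. Applying this with $D = D_{ij}$, the off-diagonal entries of $A$ forced to vanish are precisely those $A_{\ell m}$ for which exactly one of $\ell, m$ lies in $\{i,j\}$.

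It then remains to see that every off-diagonal position can be reached this way. Given $\ell \ne m$, I would invoke the hypothesis $n > 2$ to select a third index $j \notin \{\ell,m\}$ and apply the previous step with $D = D_{\ell j}$: here $\delta_\ell = -1$ while $\delta_m = 1$ (since $m \notin \{\ell, j\}$), so $A_{\ell m}(\delta_m - \delta_\ell) = 2A_{\ell m} = 0$, and because $\chr(k) \ne 2$ this yields $A_{\ell m} = 0$. Ranging over all $\ell \ne m$ annihilates every off-diagonal entry, so $A$ is diagonal. The argument is short, and the only points requiring care are exactly the three hypotheses in play: diagonality of $M$ is what places the sign matrices $D_{ij}$ inside $\So(n,k,\beta)$, the condition $n > 2$ is precisely what guarantees a spare index $j$, and $\chr(k) \ne 2$ is needed so that $2 A_{\ell m} = 0$ forces $A_{\ell m} = 0$. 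I do not anticipate a genuine obstacle beyond assembling these observations in the right order.
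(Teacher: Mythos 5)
Your proof is correct and follows essentially the same route as the paper: it uses the same sign matrices (the paper's $X_{rs}$, your $D_{ij}$) and the same commutation argument, with $n>2$ supplying the spare index and $\chr(k)\ne 2$ forcing the entries to vanish. The only cosmetic difference is that you kill each off-diagonal entry in one step with a well-chosen $D_{\ell j}$, whereas the paper first clears most entries and then handles the $(r,s)$, $(s,r)$ positions with a second matrix.
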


\begin{proof}
Suppose $A$ is such that $\Inn_A$ is the identity on $\So(n,k, \beta)$. For $1 \le r < s \le n$, let $X_{rs}$ be the diagonal matrix with all 1's, except in the $r$th and $s$th diagonal entries, where instead there are -1's. This matrix always lies in $\So(n,k, \beta)$. So, we must have $A X_{rs} = X_{rs}A.$ On the left side, the matrix is the same as $A$, but with the $r$th and $s$th columns negated. On the right side, the matrix is the same as $A$, but with the $r$th and $s$th rows negated. So, all entries of $A$ on these rows and columns which aren't in the $(r,r)$, $(r,s)$, $(s,r)$ or $(s,s)$ components must be equal to 0, since this is the only number which equals its negative. To see that the $(r,s)$ and $(s,r)$ components of $A$ must also equal 0, we can repeat this process for $X_{rt}$, where $t$ is distinct from both $r$ and $s$. (Note that this is where we use the fact that $n > 2$.) Thus, all off-diagonal elements of $A$ are 0, which means $A$ is diagonal.
\end{proof}


We want to be able to say more about the matrix $A$ when $\Inn_A$ acts as the identity. It turns out that if we make the following assumption on the orthogonal group $\So(n,k,\beta)$, then we can show that $A$ is a multiple of the identity. 

\begin{definit}
Let $k$ be a field and suppose $\beta$ is a bilinear form on $k^n$ such that it has matrix representation $M$, where $M$ is diagonal with diagonal entries $m_1,...,m_n$, which are representatives in $k^*$ of cosets of $k^*/(k^*)^2$. If for each pair $m_s$ and $m_t$, $x^2+\frac{m_s}{m_t}y^2 = 1$ has a solution $(x,y)$ such that $y \ne 0$, then we call $\So(n,k,\beta)$ a {\it friendly orthogonal group}. 
\end{definit}

With this new terminology in mind, we get the following result.

\begin{lem}
\label{IdentityLem}
Assume $n > 2$. Suppose $\So(n,k, \beta)$ is a friendly orthogonal group. Let $A \in \Gl(n,\overline{k})$. Then, $\Inn_A$ is the identity on $\So(n,k, \beta)$ if and only if $A = \alpha I$ for some $\alpha \in \overline{k}^*$.
\end{lem}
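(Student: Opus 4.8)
The converse direction is immediate: if $A=\alpha I$ then $\Inn_A(X)=(\alpha I)^{-1}X(\alpha I)=X$ for every $X$, so $\Inn_A$ is the identity. The plan is therefore to prove the forward direction. Assume $\Inn_A$ is the identity on $\So(n,k,\beta)$. The preceding lemma already forces $A$ to be diagonal, so write $A=\diag(a_1,\dots,a_n)$ with $a_i\in\overline{k}^*$. Saying that $\Inn_A=\id$ on $\So(n,k,\beta)$ is the same as saying that $A$ commutes with every $X\in\So(n,k,\beta)$, and for a diagonal $A$ this is easy to read off entrywise: comparing $(AX)_{st}=a_sX_{st}$ with $(XA)_{st}=X_{st}a_t$ gives $(a_s-a_t)X_{st}=0$ for all indices $s,t$ and all $X\in\So(n,k,\beta)$. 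Hence, to conclude that $a_s=a_t$ for a given pair $s\neq t$, it suffices to produce a single matrix in $\So(n,k,\beta)$ whose $(s,t)$-entry is nonzero.

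This is exactly where the friendly hypothesis is used. Fixing $s\neq t$, I would build the required matrix as a two-dimensional rotation in the coordinate plane spanned by $e_s$ and $e_t$, equal to the identity on all other coordinates. By friendliness there is a solution $(x_0,y_0)\in k\times k$ of $x^2+\frac{m_s}{m_t}y^2=1$ with $y_0\neq0$, and I would insert the block
\[
R'=\begin{pmatrix} x_0 & -\frac{m_s}{m_t}\,y_0 \\ y_0 & x_0 \end{pmatrix}
\]
into the rows and columns indexed by $t$ and $s$ (in that order), filling the rest with the identity; call the resulting $n\times n$ matrix $R$. A direct computation shows that $R'$ preserves the restricted form $\diag(m_t,m_s)$ and has determinant $x_0^2+\frac{m_s}{m_t}y_0^2=1$, so $R\in\So(n,k,\beta)$; its entries lie in $k$ because $m_s,m_t\in k^*$ and $x_0,y_0\in k$. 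Crucially, the $(s,t)$-entry of $R$ equals $y_0$, which is nonzero precisely because the definition of friendliness requires $y\neq0$.

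Feeding $R$ into the relation $(a_s-a_t)R_{st}=0$ with $R_{st}=y_0\neq0$ then yields $a_s=a_t$, and since $s\neq t$ was arbitrary, all diagonal entries of $A$ agree, giving $A=\alpha I$ with $\alpha=a_1$. The conceptual core is thus the reduction, via the preceding lemma and the commutation relation, to finding one element of $\So(n,k,\beta)$ with a prescribed nonzero off-diagonal entry, together with the observation that the clause $y\neq0$ in the definition of a friendly orthogonal group is exactly what makes that entry nonvanishing. I expect the only genuinely computational part to be the routine check that the rotation block preserves the form and has determinant one; everything else is bookkeeping.
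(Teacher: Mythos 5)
Your proof is correct and follows essentially the same route as the paper's: after invoking the preceding lemma to make $A$ diagonal, you build exactly the paper's matrix $Y_{st}$ (a plane rotation preserving $\diag(m_s,m_t)$ with determinant $1$, whose off-diagonal entry is the nonzero $y_0$ guaranteed by friendliness) and compare the $(s,t)$ entries of $AY_{st}=Y_{st}A$ to force $a_s=a_t$. The only difference is notational (your block is written with rows and columns in the order $t,s$ rather than $s,t$), so there is nothing to add.
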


\begin{proof}

We know from the previous lemma that $A$ is diagonal. Let $a_i$ represent the $i$th diagonal entry of $A$. Recall that we are assuming that $M$ is diagonal. Let $m_i$ represent the $i$th diagonal entry of $M$. Then, there exists $a, b \in k$ where $b \ne 0$ such that $a^2+\frac{m_s}{m_t}b^2 = 1.$ For $1 \le i < j \le n$, let $$Y_{st} = \left(\begin{array}{cccccccccccc}1 & 0 & \cdots &   &   &   &   &   &   &   & \cdots & 0 \\0 & 1 &   &   &   &   &   &   &   &   &   & \vdots \\\vdots &   & \ddots &   &   &   &   &   &   &   &   &   \\  &   &   & 1 &   &   &   &   &   &   &   &   \\  &   &   &   & a & 0 & \cdots & 0 & b &   &   &   \\  &   &   &   & 0 & 1 &   &   & 0 &   &   &   \\  &   &   &   & \vdots &   & \ddots &   & \vdots &   &   &   \\  &   &   &   & 0 &   &   & 1 & 0 &   &   &   \\  &   &   &   & -\frac{m_s}{m_t}b & 0 & \cdots & 0 & a &   &   &   \\  &   &   &   &   &   &   &   &   & 1 &   & \vdots \\\vdots &   &   &   &   &   &   &   &   &   & \ddots & 0 \\0 & \cdots &   &   &   &   &   &   &   & \cdots & 0 & 1\end{array}\right),$$ where the noteworthy entries occur in the $s$th and $t$th rows and columns. It is a simple calculation to show that $Y_{st}^TMY_{st} = M$, and that $\det(Y_{st}) =1$. So, $Y_{st} \in \So(n,k, \beta)$. Then, we know that $AY_{st} = Y_{st}A.$ By comparing both sides of this equality and inspecting the $(s,t)$ entry, we see that $ba_t = ba_s$. Since we are assuming that $b \ne 0$, then it follows that $a_t = a_s$. Since we can repeat this for all $s$ and $t$, then it is clear that $A$ is a multiple of the identity.

\end{proof}

This result is only useful if we can show that $\So(n,k,\beta)$ is commonly a friendly orthogonal group. In the following theorem, we see that most $\So(n,k,\beta)$ are friendly.

\begin{lem}
\begin{enumerate}

\item If $\chr(k) \ne 2, 3$, then $\So(n,k,\beta)$ is a friendly orthogonal group. 

\item If $M = \left(\begin{array}{cccc}m_1 &   &   &   \\  & m_2 &   &   \\  &   & \ddots &   \\  &   &   & m_n\end{array}\right)$ is such that $m_s \ne -m_t$ whenever $s \ne t$ and $\chr(k) \ne 2$, then $\So(n,k,\beta)$ is a friendly orthogonal group. 

\end{enumerate}
\end{lem}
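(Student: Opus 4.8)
The plan is to reduce the statement, for each ordered pair $(s,t)$, to producing a $k$-rational point with nonzero second coordinate on the affine conic $x^2 + c\,y^2 = 1$, where $c = m_s/m_t \in k^*$. Since the $m_i$ lie in $k^*$ we have $c \ne 0$, and the conic always carries the trivial point $(1,0)$. My approach is the classical secant-line parametrization: a smooth conic with one rational point has its remaining points swept out by the lines through that point, so I would intersect $x^2 + c y^2 = 1$ with the pencil $y = \lambda(x-1)$ through $(1,0)$ and read off the second intersection as a function of the slope $\lambda \in k$.

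Carrying this out, substituting $y = \lambda(x-1)$ and factoring out the known root $x = 1$ gives the second intersection point
$$x = \frac{c\lambda^2 - 1}{c\lambda^2 + 1}, \qquad y = \frac{-2\lambda}{c\lambda^2 + 1},$$
valid precisely when $c\lambda^2 + 1 \ne 0$, and with $y \ne 0$ exactly when $\lambda \ne 0$ (here I use $\chr(k) \ne 2$ so that $2 \ne 0$). Thus the whole problem becomes the combinatorial task of choosing, for the given $c$, a slope $\lambda \in k^*$ that keeps the denominator $c\lambda^2 + 1$ nonzero.

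For part $(ii)$ this choice is immediate: the hypothesis $m_s \ne -m_t$ says exactly that $c \ne -1$, so $\lambda = 1$ is admissible and yields $y = -2/(c+1) \ne 0$. For part $(i)$ I would split on $c$. When $c \ne -1$ the same choice $\lambda = 1$ works. The only remaining case is $c = -1$, i.e.\ the conic $x^2 - y^2 = 1$; here $\lambda = 1$ is forbidden because the denominator vanishes, and this is the one place where $\chr(k) \ne 3$ is genuinely needed. I would take $\lambda = 2$: then $c\lambda^2 + 1 = -3 \ne 0$ since $\chr(k) \ne 3$, while $\lambda = 2 \ne 0$ since $\chr(k) \ne 2$, giving the point $(5/3,\,4/3)$ with $4/3 \ne 0$. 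Equivalently one can solve $x^2 - y^2 = (x-y)(x+y) = 1$ by setting $x - y = 2$ and $x + y = 1/2$.

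The main obstacle is precisely this degenerate case $c = -1$ in part $(i)$: the secant construction breaks down at the slope $\lambda = 1$, and one must verify that a substitute slope exists. This is exactly what the characteristic hypotheses guarantee, since $\chr(k) \ne 2,3$ forces $2 \notin \{0,1,-1\}$ so that $2$ is always an available slope, whereas in part $(ii)$ the assumption $m_s \ne -m_t$ removes the troublesome value $c=-1$ altogether, which is why only $\chr(k)\ne 2$ is required there.
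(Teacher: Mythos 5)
Your proposal is correct and is essentially the paper's proof: the paper simply exhibits the two witness solutions $\left(\frac{\alpha-1}{\alpha+1},\frac{2}{\alpha+1}\right)$ for $\frac{m_s}{m_t}=\alpha\ne -1$ (needing $\chr(k)\ne 2$) and $\left(\frac{5}{3},\frac{4}{3}\right)$ for $\alpha=-1$ (needing $\chr(k)\ne 3$), which are exactly the points your secant-line parametrization produces at slopes $\lambda=1$ and $\lambda=2$, up to the irrelevant sign of $y$. Your chord-through-$(1,0)$ construction is a nice derivation of where those solutions come from, but the case split and the final witnesses coincide with the paper's.
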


\begin{proof}

When $\chr(k) \ne 2$, then we see that $1 = x^2+\alpha y^2$ has solution $(x,y) = \left(\frac{\alpha-1}{\alpha+1}, \frac{2}{\alpha+1}\right)$ when $\alpha \ne -1$. When $\chr(k) \ne 3$, then we see that $1=x^2-y^2$ has solution $(x,y) = \left( \frac{5}{3}, \frac{4}{3} \right)$. Based on these two solutions, it is clear that $x^2+\frac{m_s}{m_t}y^2 = 1$ will always have a solution in $k$ if 
$\chr(k) \ne 2, 3$, and also when $\frac{m_s}{m_t} \ne -1$ and $\chr(k) \ne 2$.
\end{proof}

To show that this condition on orthogonal groups is not trivial, we note a case where $\So(n,k, \beta)$ is not a friendly orthogonal group.

\begin{beisp}
Suppose $k = \mathbb{F}_3$ and that $\beta$ is such that $M = \left(\begin{array}{cc}I & 0 \\0 & -1\end{array}\right).$ Then, $\So(n,\mathbb{F}_3, \beta)$ is not a friendly orthogonal group, because there is no solution to $x^2-y^2=1$ where $y \ne 0.$
\end{beisp}

For the remainder of the paper, we will assume that all orthogonal groups are friendly.

Fix a bilinear form $\beta$ with matrix $M$. If $A \in \Gl(n,k)$ is a matrix such $M^{-1}A^TMA = \alpha  I_{n}$, then $A \in \Go(n,k,\beta) = \{ A\in \M(n,k)\mid \beta(Ax,Ay) = \alpha \beta(x,y), \alpha \in k \}$, the group of similitudes of $\beta$.

We now have the following preliminary result that characterizes $k$-automorphisms of $\So(n,k, \beta)$.

\begin{lem}

Assume $n >2$. If $A \in \Gl(n,\overline{k})$, then $\Inn_A(\So(n,k,\beta)) \subseteq\So(n,k, \beta)$ if and only if $A \in \Go(n,\overline{k},\beta)$, $A = p \widetilde{A}$ where $p \in \overline{k}$, and 
\begin{enumerate}
\item $\widetilde{A} \in \So(n, \overline{k}, \beta)$ if $n$ odd, or
\item $\widetilde{A} \in \oo(n, \overline{k}, \beta)$ if $n$ is even.
\end{enumerate}

\end{lem}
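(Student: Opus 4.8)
The plan is to convert the membership condition into a statement about invariant bilinear forms and then exploit the abundance of rational elements guaranteed by friendliness. Setting $N = (A^{-1})^T M A^{-1}$, the key observation is that for any $X$ the image $\Inn_A(X)=A^{-1}XA$ satisfies $(A^{-1}XA)^TM(A^{-1}XA) = A^T(X^TNX)A$. Since $A$ is invertible, demanding that every such image obey the defining relation $Y^TMY=M$ of $\So(n,k,\beta)$ is equivalent to requiring $X^TNX = N$ for all $X\in\So(n,k,\beta)$. Thus the heart of the lemma is the claim that a matrix $N$ fixed by the congruence action of every rational element of $\So(n,k,\beta)$ must be a scalar multiple of $M$; granting this, $N=cM$ rearranges to $A^TMA = c^{-1}M$, which is precisely $A\in\Go(n,\overline{k},\beta)$ with similitude factor $\alpha=c^{-1}$.

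For the forward direction I would prove $N=cM$ by hand, in the style of the two preceding lemmas rather than by invoking density of the rational points. First, feeding in the diagonal sign matrices $X_{rs}$ (all $1$'s except $-1$ in positions $r$ and $s$) shows, exactly as in the lemma forcing $A$ to be diagonal, that conjugation by $X_{rs}$ multiplies the entry $N_{ij}$ by $\epsilon_i\epsilon_j$, where $\epsilon_i=-1$ iff $i\in\{r,s\}$; letting $\{r,s\}$ range over all pairs and using $n>2$ annihilates every off-diagonal entry, so $N=\diag(\nu_1,\dots,\nu_n)$. Second, I would substitute the friendly rotations $Y_{st}$ constructed in the proof of Lemma \ref{IdentityLem}, whose active $2\times2$ block is $\left(\begin{smallmatrix} a & b \\ -\frac{m_s}{m_t}b & a\end{smallmatrix}\right)$ with $a^2+\frac{m_s}{m_t}b^2=1$ and $b\neq 0$. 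Comparing the $(s,s)$ entry of $Y_{st}^TNY_{st}=N$ and using $1-a^2=\frac{m_s}{m_t}b^2$ collapses to $\nu_s = \frac{m_s}{m_t}\nu_t$, i.e.\ $\nu_s/m_s=\nu_t/m_t$; ranging over all $s,t$ yields $N=cM$ with $c\neq0$ since $N$ is nondegenerate. I expect this to be the main obstacle: it is really the uniqueness of the invariant nondegenerate form for the natural representation, and the friendliness hypothesis is exactly what supplies enough rational elements to force it elementarily.

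With $A\in\Go(n,\overline{k},\beta)$ in hand, the decomposition is routine. Because $\overline{k}$ is algebraically closed, the factor $\alpha=c^{-1}$ is a square, say $\alpha=p^2$ with $p\in\overline{k}^*$; setting $\widetilde{A}=p^{-1}A$ gives $\widetilde{A}^TM\widetilde{A}=p^{-2}\alpha M = M$, so $\widetilde{A}\in\oo(n,\overline{k},\beta)$ and $A=p\widetilde{A}$, which is case (ii). When $n$ is odd, $\det(-I)=(-1)^n=-1$, so if $\det\widetilde{A}=-1$ I would replace the pair $(p,\widetilde{A})$ by $(-p,-\widetilde{A})$; since $\det(-\widetilde{A})=-\det\widetilde{A}=1$ and $(-\widetilde{A})^TM(-\widetilde{A})=M$, the new factor lies in $\So(n,\overline{k},\beta)$, giving case (i).

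Finally, for the converse I would verify the defining conditions directly: if $A^TMA=\alpha M$ then $(A^T)^{-1}MA^{-1}=\alpha^{-1}M$, so for $X\in\So(n,k,\beta)$ one computes $(A^{-1}XA)^TM(A^{-1}XA)=\alpha^{-1}A^T(X^TMX)A=\alpha^{-1}A^TMA=M$, while $\det(A^{-1}XA)=\det X=1$. Hence the image again satisfies $Y^TMY=M$ with determinant $1$, so $\Inn_A$ preserves the special orthogonal group, as required. Note that the decomposition hypothesis $A=p\widetilde{A}$ with $\widetilde{A}$ orthogonal (resp.\ special orthogonal) is equivalent to $A\in\Go(n,\overline{k},\beta)$ over the algebraically closed base, so the two halves of the stated equivalence match up exactly.
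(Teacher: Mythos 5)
Your proposal is correct and is at bottom the same argument as the paper's: both derive from the invariance hypothesis a statement that a single matrix built out of $A$ and $M$ is forced to be essentially scalar, both obtain this from friendliness via the same rational test matrices $X_{rs}$ and $Y_{st}$, and the decomposition $A = p\widetilde{A}$ together with the sign fix for odd $n$ is identical. The packaging differs, and that is worth spelling out. The paper manipulates the relation $(A^{-1}XA)^TM(A^{-1}XA) = M$ into the commutation statement that $AM^{-1}A^TM$ commutes with every $X \in \So(n,k,\beta)$, and then simply cites Lemma \ref{IdentityLem}; you instead recast the condition as congruence-invariance $X^TNX = N$ of the form $N = (A^{-1})^TMA^{-1}$ and re-prove the resulting claim $N = cM$ from scratch. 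The two formulations are literally equivalent: since $X^T = MX^{-1}M^{-1}$ for $X \in \So(n,k,\beta)$, the invariance $X^TNX = N$ says exactly that $M^{-1}N = (AM^{-1}A^TM)^{-1}$ commutes with all of $\So(n,k,\beta)$, so you could have invoked Lemma \ref{IdentityLem} at that point and skipped your two computational steps; conversely, your inline argument is essentially that lemma's proof transposed to the congruence setting. What your route buys is self-containedness and a cleaner conceptual gloss (uniqueness of the invariant nondegenerate form); what it costs is redoing work the paper already has on hand. One caveat, shared equally by your converse and the paper's (which is dismissed as ``clear''): verifying $Y^TMY = M$ and $\det Y = 1$ only places $\Inn_A(X)$ in $\So(n,\overline{k},\beta)$, whereas membership in $\So(n,k,\beta)$ also requires the entries to lie in $k$; this rationality point is glossed over in the lemma as stated and is genuinely addressed only in Theorem \ref{CharThm2So}.
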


\begin{proof}

Suppose $A \in \Gl(n,\overline{k})$ and $\Inn_A(\So(n,k, \beta)) \subseteq \So(n,k, \beta)$. Choose $X \in \So(n,k, \beta).$ Then, $A^{-1}XA \in \So(n,k, \beta).$ So, 
\begin{align*}
I &= (A^{-1}XA )^{-1}A^{-1}XA \\ & = M^{-1}(A^{-1}XA )^TMA^{-1}XA \\ & =  M^{-1}A^{T}X^T(A^{-1})^T MA^{-1}XA. 
\end{align*}
This implies that $$A^{-1}X^{-1} = M^{-1}A^{T}X^T(A^{-1})^T MA^{-1},$$ which means $$X^{-1} = AM^{-1}A^{T}X^T(A^{-1})^T MA^{-1}.$$ We can rewrite this as $$M^{-1}X^TM = AM^{-1}A^{T}X^T(A^{-1})^T MA^{-1}.$$ If we transpose both sides, then we see that $$MXM^{-1} = (A^{-1})^TMA^{-1}XAM^{-1}A^T.$$ Solving for the $X$ term on the left, we get that 
\begin{align*}
X &= M^{-1}(A^{-1})^TMA^{-1}XAM^{-1}A^TM \\ &= (AM^{-1}A^TM)^{-1}X(AM^{-1}A^TM).\end{align*}
By Lemma \ref{IdentityLem}, it follows that $(AM^{-1}A^TM) = \alpha I$ for some $\alpha \in \overline{k}^*.$ Thus, $A \in \Go(n,\overline{k},\beta)$. Choose $p \in \overline{k}$ such that $p^2 = \alpha$. Then let $ \widetilde{A} = \frac{1}{p}A.$  It follows that $M^{-1}\widetilde{A}^TM \widetilde{A} = \frac{1}{p^2}M^{-1}A^TMA = \frac{1}{\alpha}(\alpha  I) =  I, $ which shows that $\widetilde{A}$ is orthogonal. That is, $\widetilde{A} \in \oo(n, \overline{k}, \beta)$. If $n$ is odd and $\det(\widetilde{A}) = -1$, then we can replace $\widetilde{A}$ with $-\widetilde{A}$, and instead have a matrix inside $\So(n, \overline{k}, \beta)$.

Since the converse  is clear, we have proven the statement.
\end{proof}

In the following theorem which completes the characterization of $k$-automorphisms on $\So(n,k, \beta)$, we see that we can choose the entries of $A$ to be in $k$ or a quadratic extension of $k$. For the remainder of this paper, we will use $\sqrt{\alpha}$ to denote a fixed square root of $\alpha \in k^*$.

\begin{theorem}
\label{CharThm2So}
Assume $n >2$.
\begin{enumerate}
\item If $n$ is odd and $A$ is in $\So(n,\overline{k}, \beta)$, then $\Inn_A$  keeps $\So(n,k, \beta)$ invariant if and only if  we can choose $\tilde{A} \in \So(n,k,\beta)$ such that $\Inn_{\tilde{A}} = \Inn_A$.
\item If $n$ is even and $A$ is in $\oo(n,\overline{k}, \beta)$, then $\Inn_A$  keeps $\So(n,k, \beta)$ invariant if and only if there exists $p \in \overline{k}$ and $B \in \Go(n,k,\beta)$ such that $B = pA$. Further, we can show $A \in \oo(n, k[\sqrt{\alpha}], \beta)$ where each entry of $A$ is a $k$-multiple of $\sqrt{\alpha}.$ 
\end{enumerate}
\end{theorem}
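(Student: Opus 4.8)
The plan is to use Galois descent: since $\Inn_A$ restricts to a $k$-automorphism of $\So(n,k,\beta)$, the group $\gal(\overline{k}/k)$ cannot detect it, and Lemma~\ref{IdentityLem} then controls how $A$ itself transforms under Galois conjugation. Concretely, I would fix $\sigma \in \gal(\overline{k}/k)$ and, for each $X \in \So(n,k,\beta)$, set $Y = A^{-1}XA \in \So(n,k,\beta)$. Since $X$ and $Y$ have entries in $k$, applying $\sigma$ gives $\sigma(A)^{-1}X\sigma(A) = \sigma(Y) = Y = A^{-1}XA$, so $\Inn_{A\sigma(A)^{-1}}$ is the identity on $\So(n,k,\beta)$. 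By Lemma~\ref{IdentityLem} (this is where the standing friendliness assumption enters) there is a scalar $c_\sigma \in \overline{k}^*$ with $\sigma(A) = c_\sigma A$. Applying $\sigma$ to the orthogonality relation $A^T M A = M$, legitimate since $M$ has entries in $k$, yields $c_\sigma^2 = 1$, so $c_\sigma = \pm 1$ for every $\sigma$; note this relation holds in both parity cases because $A \in \So(n,\overline{k},\beta) \subseteq \oo(n,\overline{k},\beta)$ when $n$ is odd.

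For $n$ odd I would additionally apply $\sigma$ to $\det A = 1$ to get $c_\sigma^n = 1$; since $n$ is odd and $c_\sigma = \pm 1$, this forces $c_\sigma = 1$ for all $\sigma$. Hence $A$ is fixed by $\gal(\overline{k}/k)$, so $A \in \M(n,k)$ and therefore $A \in \So(n,k,\beta)$, and $\tilde A = A$ proves the forward direction. The converse is immediate, since conjugation by an element of the group $\So(n,k,\beta)$ preserves $\So(n,k,\beta)$.

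For $n$ even, $\sigma \mapsto c_\sigma$ is a homomorphism $\gal(\overline{k}/k) \to \{\pm 1\}$ (the cocycle relation collapses because $\pm 1$ is $\gal$-fixed). If it is trivial, then $A \in \M(n,k) \cap \oo(n,\overline{k},\beta) = \oo(n,k,\beta) \subseteq \Go(n,k,\beta)$, and I may take $p = 1$, $B = A$. If it is nontrivial, it is the character of a quadratic extension $k[\sqrt{\alpha}]$, separable since $\chr(k) \ne 2$, normalized so that $c_\sigma = \sigma(\sqrt{\alpha})/\sqrt{\alpha}$; then $\sigma(a_{ij}/\sqrt{\alpha}) = a_{ij}/\sqrt{\alpha}$ for each entry, so $a_{ij}/\sqrt{\alpha} \in k$. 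This is exactly the claim that $A = \sqrt{\alpha}\,A_0$ with $A_0 \in \M(n,k)$, i.e. every entry of $A$ is a $k$-multiple of $\sqrt{\alpha}$ and $A \in \oo(n,k[\sqrt{\alpha}],\beta)$. A short computation gives $A_0^T M A_0 = \alpha^{-1}M$, so $A_0 \in \Go(n,k,\beta)$, and $p = 1/\sqrt{\alpha}$, $B = A_0$ finish the forward direction. For the converse, if $B = pA \in \Go(n,k,\beta)$ then $\Inn_A = \Inn_B$, and since $\Go(n,k,\beta)$ normalizes $\So(n,k,\beta)$, the map $\Inn_A$ keeps $\So(n,k,\beta)$ invariant.

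The one genuinely delicate step is the descent itself: deducing membership in $k$ from invariance under every $\sigma$. This is cleanest when the fixed field of the action is $k$, so I would either assume $k$ perfect or run the argument over the separable closure $k_s$, whose fixed field is $k$ and over which the quadratic extension arising in the even case is automatically separable because $\chr(k) \ne 2$. Everything else — pinning $c_\sigma$ to $\pm 1$, the odd/even dichotomy, and identifying the $\pm 1$-valued character with an explicit $\sqrt{\alpha}$ by Kummer theory — is routine once the descent is in place.
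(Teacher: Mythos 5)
Your argument is a genuine departure from the paper's proof, and over a perfect field it is correct and substantially shorter. The paper proceeds computationally: it conjugates explicit elements of $\So(n,k,\beta)$ (the diagonal sign matrices $X_{rs}$ and the rotation-type matrices $Y_{st}$ supplied by friendliness) and grinds out, in Steps One through Five, that every pairwise product $a_{si}a_{tj}$ of entries of $A$ lies in $k$, from which the common quadratic extension $k[\sqrt{\alpha}]$ and the ``$k$-multiple of $\sqrt{\alpha}$'' statement are read off in Step Six, with a determinant argument (Step Seven) handling odd $n$. You replace Steps One through Six by a single descent: $\sigma(A)A^{-1}$ centralizes $\So(n,k,\beta)$, Lemma \ref{IdentityLem} (the same friendliness input the paper uses) makes it scalar, orthogonality pins the scalar to $\pm 1$, and the determinant kills the sign when $n$ is odd, exactly the role of the paper's Step Seven. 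Both proofs rest on the same two pillars, Lemma \ref{IdentityLem} and the relation $A^TMA = M$, and your converse directions are fine.

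The gap is the one you flagged but did not close: the theorem is stated for an arbitrary field of characteristic $\ne 2$ (the paper stresses in its introduction that, unlike \cite{Helm2000}, its results are not restricted to perfect $k$), and your descent only works when $k$ is perfect. If $k$ is not perfect, the fixed field of $\aut(\overline{k}/k)$ acting on $\overline{k}$ is the purely inseparable closure of $k$, not $k$: for $k = \mathbb{F}_p(t)$ the element $t^{1/p}$ is fixed by every $\sigma$ yet lies outside $k$, so ``$\sigma(A)=A$ for all $\sigma$'' does not put the entries of $A$ in $k$. Your proposed repair of running the argument over $k_s$ does not help, because the hypothesis only places the entries of $A$ in $\overline{k}$, and the Galois formalism alone cannot force them into $k_s$; likewise, in the even case a nontrivial $\pm 1$-valued character with kernel $\aut(\overline{k}/k(a_{ij}))$ need not cut out a quadratic extension once inseparability is allowed, since an orbit $\{a,-a\}$ is compatible with $[k(a):k]=2p^m$ (e.g.\ $a^{2p}=t$ over $\mathbb{F}_p(t)$, where $x^{2p}-t$ is irreducible). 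What is missing is precisely what the paper's computation supplies: $a_{si}a_{tj}\in k$ for all entries, in particular $a_{ij}^2 \in k$, so each entry satisfies the separable polynomial $x^2-a_{ij}^2$ (characteristic $\ne 2$) and descent then legitimately lands in $k$. So your proof is complete for perfect $k$, which covers every explicit field treated in the paper, but to obtain the theorem in the generality stated you would need an independent argument that the entries of $A$ are separable over $k$, and the only visible route to that is the paper's element-wise computation.
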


\begin{proof}

Let $n >2$ be arbitrary, and suppose $A$ is in $\oo(n,\overline{k},\beta)$ such that $\Inn_A$  keeps $\So(n,k, \beta)$ invariant. Let $a_{ij}$ be the $(i,j)$ entry of $A$. We break the proof into steps. At the beginning of each step, we state what we shall prove in that step.

{\bf Step One:} $a_{ri}a_{rj}+a_{si}a_{sj} \in k$ for all $i,j, r,s$.

Let $X_{rs}$ be the diagonal matrix with all entries -1, except for the $(r,r)$ and $(s,s)$ entries, which are 1. Since $M$ is diagonal, it is clear that $X_{rs} \in \oo(n,k, \beta)$. If $n$ is even, then $X_{rs} \in \So(n,k,\beta)$. If $n$ is odd, then $-X_{rs} \in \So(n,k,\beta)$. So, we know that $\Inn_A(X_{rs})$ or $-\Inn_A(X_{rs})$ must lie in $\So(n,k,\beta)$. It is also clear that $\Inn_A(I) \in \So(n,k, \beta)$. So, both $\Inn_A(X_{rs})$ and $\Inn_A(I)$ have entries in $k$. Let us examine the entries of $\Inn_A(X_{rs})$:
\begin{align*}
\Inn_A(X_{rs}) &= A^{-1}X_{rs}A = (M^{-1}A^T)(MX_{rs}A)\\
&= \left(\begin{array}{ccc}\frac{a_{11}}{m_1} & \cdots & \frac{a_{n1}}{m_1} \\\vdots & \ddots & \vdots \\\frac{a_{1n}}{m_n} & \cdots & \frac{a_{nn}}{m_n}\end{array}\right)  \left(\begin{array}{ccc}-m_1a_{11} & \cdots & -m_1a_{1n} \\\vdots &   & \vdots \\-m_{r-1}a_{r-1,1} & \cdots & -m_{r-1}a_{r-1,n} \\m_ra_{r1} & \cdots & m_ra_{rn} \\-m_{r+1}a_{r+1,1} & \cdots & -m_{r+1}a_{r+1,n} \\\vdots &   & \vdots \\-m_{s-1}a_{s-1,1} & \cdots & -m_{s-1}a_{s-1,n} \\m_sa_{s1} & \cdots & m_sa_{sn} \\-m_{s+1}a_{s+1,1} & \cdots & -m_{s+1}a_{s+1,n} \\\vdots &   & \vdots \\-m_{n}a_{n1} & \cdots & -m_{n}a_{nn}\end{array}\right)\\
&= \left( a_{ri}a_{rj}+a_{si}a_{sj}-\sum_{l \ne r,s}a_{li}a_{lj} \right)_{(i,j)}.
\end{align*}

Since $\Inn_A(X_{rs})$ and $I= \Inn_A(I)$ have entries in $k$, then so does the matrix $\Inn_A(X_{rs}+I) = I+\Inn_A(X_{rs}).$ Using a similar calculation to the above, we can see that $\Inn_A(I)+\Inn_A(X_{rs})$ has entries of the form $2a_{ri}a_{rj}+2a_{si}a_{sj}.$ It follows that $a_{ri}a_{rj}+a_{si}a_{sj} \in k$ for all $i,j, r,s$.

{\bf Step Two:} $a_{ri}a_{rj} \in k$ for all $i,j,r$.

Choose integer $t$ distinct from $r$ and $s$ such that $1 \le t \le n$. We have that $$a_{ri}a_{rj}-a_{ti}a_{tj}  =(a_{ri}a_{rj}+a_{si}a_{sj})-(a_{si}a_{sj}+a_{ti}a_{tj}) \in k,$$ 
which means that $$a_{ri}a_{rj} = \frac{1}{2}(a_{ri}a_{rj}-a_{ti}a_{tj} )+\frac{1}{2}(a_{ri}a_{rj}+a_{ti}a_{tj} ) \in k,$$ for all $i,j,r$.

{\bf Step Three:} $a_{ir}a_{jr} \in k$ for all $i,j, r$. 

Now, we consider the bilinear form $\beta_1$ which has matrix $M^{-1}$ (using the standard basis for $k^n$). We know that $X \in \So(n,\overline{k},\beta)$ if and only $X^TMX = M$. But, if that is the case for a given $X$, then it follows that $$X^{-1}M^{-1}(X^{-1})^T= M^{-1}.$$ Thus, $(X^{-1})^T \in \So(n,\overline{k},\beta_1).$ Since this is a group, then we also know that $X^T \in \So(n,\overline{k},\beta_1).$ It is then easy to see that $X \in \So(n,k,\beta)$ if and only if $X^T \in \So(n,k,\beta_1)$.

We further claim that $\Inn_{A^T}$ is a $k$-automorphism of $ \So(n,k,\beta_1)$. Suppose $Y \in \So(n,k,\beta_1)$ and consider $\Inn_{A^T}(Y) = (A^T)^{-1}YA^T.$ This matrix lies in $ \So(n,k,\beta_1)$ if and only if its inverse-transpose lies in $ \So(n,k,\beta)$. The inverse-transpose of $\Inn_{A^T}(Y)$ is $A(Y^{-1})^TA^{-1} = \Inn_{A^{-1}}((Y^{-1})^T) = (\Inn_A)^{-1}((Y^{-1})^T).$ This proves our claim.

Since $\Inn_{A^T}$ is a $k$-automorphism of $ \So(n,k,\beta_1)$, then it follows Step Two that $a_{ir}a_{jr} \in k$ for all $i,j, r$. 

{\bf Step Four:} $a_{si}a_{tj}-a_{ti}a_{sj} \in k,$ for all $i,j,s,t$.

We now recall the matrices $Y_{st} \in \So(n,k,\beta)$ from the proof of Lemma \ref{IdentityLem}. So, it must be the case that $\Inn_A(Y_{st}) \in \So(n,k,\beta)$. Let us examine the entries of $\Inn_A(Y_{st})$:
\begin{align*}
\Inn_A(Y_{st}) &= A^{-1}Y_{st}A\\ &= (M^{-1}A^T)(MY_{st}A)\\ 
&= \left(\begin{array}{ccc}\frac{a_{11}}{m_1} & \cdots & \frac{a_{n1}}{m_1} \\\vdots & \ddots & \vdots \\\frac{a_{1n}}{m_n} & \cdots & \frac{a_{nn}}{m_n}\end{array}\right)  \left(\begin{array}{ccc}m_1a_{11} & \cdots & m_1a_{1n} \\\vdots &   & \vdots \\m_{s-1}a_{s-1,1} & \cdots & m_{s-1}a_{s-1,n} \\am_sa_{s1}+bm_sa_{t1} & \cdots & am_sa_{sn}+bm_sa_{tn} \\m_{s+1}a_{s+1,1} & \cdots & m_{s+1}a_{s+1,n} \\\vdots &   & \vdots \\m_{t-1}a_{t-1,1} & \cdots & m_{t-1}a_{t-1,n} \\-bm_ta_{s1}+am_ta_{t1} & \cdots & -bm_ta_{sn}+am_ta_{tn} \\m_{t+1}a_{t+1,1} & \cdots & m_{t+1}a_{t+1,n} \\\vdots &   & \vdots \\m_{n}a_{n1} & \cdots & m_{n}a_{nn}\end{array}\right)\\
&= \left( a_{si}(aa_{sj}+ba_{tj})+a_{ti}(-ba_{sj}+aa_{tj})+\sum_{l \ne s,t} a_{li}a_{lj} \right)_{(i,j)}.
\end{align*}

Since each of the matrix entries of $\Inn_A(Y_{ij})$ must lie in $k$ and we also know that $a_{li}a_{lj} \in k$ for $1 \le l \le n$ from Step Two, then it follows that 

$$a_{si}(aa_{sj}+ba_{tj})+a_{ti}(-ba_{sj}+aa_{tj})$$
$$=a(a_{si}a_{sj})+b(a_{si}a_{tj})-b(a_{ti}a_{sj})+a(a_{ti}a_{tj}) \in k.$$

We know $b \in k$, and that $a_{si}a_{sj}$ and $a_{ti}a_{tj} \in k$ by Step Two. So, we see that $a_{si}a_{tj}-a_{ti}a_{sj} \in k,$ for all $i,j,s,t$.
  
 {\bf Step Five:} $a_{si}a_{tj} \in k$ for all $i,j,s,t$. 
 
 {\bf Substep Five A} : $a_{si}(a_{si}a_{tj}-a_{ti}a_{sj}) = ca_{si}$ for $c \in k$.

If both $a_{si}$ and $a_{tj} \in k$, then $a_{is}a_{jt} \in k$ is clear. So, we will assume $a_{tj} \not \in k$. We will also assume that $a_{si} \ne 0$, since if $a_{si} = 0$, then it is again clear that $a_{si}a_{tj} \in k$.

From Step Four, we know that $$a_{si}a_{tj}-a_{ti}a_{sj} \in k.$$ It follows that $$a_{si}(a_{si}a_{tj}-a_{ti}a_{sj}) \in k[a_{si}].$$ Let $c = a_{si}a_{tj}-a_{ti}a_{sj}$. Then, we have $a_{si}(a_{si}a_{tj}-a_{ti}a_{sj}) = ca_{si}$ for $c \in k$.

 {\bf Substep Five B} : $a_{si}(a_{si}a_{tj}-a_{ti}a_{sj}) = da_{tj}$ for $d \in k$.

We now consider $a_{si}(a_{si}a_{tj}-a_{ti}a_{sj}) = a_{si}^2a_{tj}-a_{si}a_{sj}a_{ti}$ in a different fashion. From Step Two, we know that $a_{ti}a_{tj} \in k$. Also recall that we are assuming $a_{tj} \not \in k$. Since Step Two also tells us that $a_{tj}^2 \in k$, then it follows that $a_{ti}$ and $a_{tj}$ must lie in the same coset of $k^*/ (k^*)^2$. So, there exists $p \in k$ such that $a_{ti} = pa_{tj}$. From these observations, we see that $$a_{si}(a_{si}a_{tj}-a_{ti}a_{sj}) = (a_{si}^2-a_{si}a_{sj}p)a_{tj}.$$ From Step Two, we know that $a^2_{si}, a_{si}a_{sj} \in k$. Let $d = a_{si}a_{tj}-a_{ti}a_{sj}$, and note that $d \in k$. We have just shown that $a_{si}(a_{si}a_{tj}-a_{ti}a_{sj}) = da_{tj}$ for $d \in k$.

 {\bf Substep Five C:} $a_{si}a_{tj} \in k$ for all $i,j,s,t$. 
 
 Combining Substeps Five A and B, we see that $ca_{si} = da_{tj}$ for some $c, d \in k$. We prove this step by considering the two cases where $c \ne 0$ and $c = 0$.
 
 If $c \ne 0$, then $d \ne 0$. Further, we have that $a_{si} = \frac{d}{c}a_{tj}.$ From this, we see that $a_{si}a_{tj} = \frac{d}{c}a_{tj}^2.$ Since $\frac{d}{c} \in k$ by assumption and $a_{tj}^2$ by Step Two, then we have that $a_{si}a_{tj} \in k$.
 
 Alternatively, if $c = 0$, then since $c = a_{si}a_{tj}-a_{ti}a_{s}$, we know that $a_{si}a_{tj}=a_{ti}a_{sj}$. Recall that in Substep Five B that $a_{ti} = pa_{tj}$ for $p \in k$. Combining these facts, we have that 
 
 \begin{align*}
 a_{si}a_{tj} &= a_{ti}a_{sj}\\
 &= pa_{tj}a_{sj}\\
 \end{align*}
 
 By Step Three, we know that $a_{tj}a_{sj} \in k$. So, it follows that $a_{si}a_{tj} \in k$.

{\bf Step Six:} $A \in \oo(n,k[\sqrt{\alpha}],\beta)$ for some $\alpha \in k$.

From Step Five, it is clear that $k[a_{si}] = k[a_{tj}]$ for all $i,j,s,t$ (assuming that $a_{is}$ and $a_{jt}$ are both nonzero). Fix a nonzero entry $a_{si}$ of $A$. Let $\alpha = a_{is}^2$ and denote $a_{si}$ as $\sqrt{\alpha}$. We have shown that all the entries of $A$ are in $k[\sqrt{\alpha}].$ This means that $A \in \oo(n,k[\sqrt{\alpha}],\beta)$, and all of the entries of $A$ are $k$-multiples of $\sqrt{\alpha}$, as desired. 

{\bf Step Seven:} If $n$ is odd, then $A \in \So(n,k,\beta)$.

If $n$ is odd, then we can replace $A$ with $-A$ to get a matrix in $\So(n,k[\sqrt{\alpha}],\beta).$ So, assume that $A \in \So(n,k[\sqrt{\alpha}],\beta).$ We now show that we do not need a quadratic extension of $k$ when $n$ is odd. Proceed by contradiction and assume  $A \in \So(n,k[\sqrt{\alpha}],\beta)$ where $\sqrt{\alpha} \not \in k$. From Step Six we know that $\sqrt{\alpha}A \in \Gl(n,k)$. Then, 
$$\det(\sqrt{\alpha}A ) = [\sqrt{\alpha}]^n\det(A) = \alpha^{\frac{n}{2}} \not \in k,$$
which is a contradiction. So, if $n$ is odd, then $A \in \So(n,k,\beta)$.
\end{proof}

\section{$k$-involutions of $\So(n,k, \beta)$}

We now begin to focus on $k$-involutions and their characterization, as eel as the characterization of their isomorphy classes. We will distinguish different types of $k$-involutions. First, we note that for some $k$-involutions, $\phi$, there exists $A \in \oo(n,k,\beta)$ such that $\phi = \Inn_A$, but not in all cases. Sometimes we must settle for $A \in \oo(n,k[\sqrt{\alpha}], \beta) \setminus \oo(n,k,\beta)$ where each entry of $A$ is a $k$-multiple of $\sqrt{\alpha}$.

This is not the only way in which we can distinguish between different types of $k$-involutions. If $\Inn_A$ is a $k$-involution, then $\Inn_{A^2} = (\Inn_A)^2$ is the identity map. We know from earlier that this means that $A^2 = \gamma I$ for some $\gamma \in \overline{k}.$ But, we know for certain that $A$ is orthogonal. So, $A^2$ is also orthogonal. That means that $(A^2)^TM(A^2) = M$, which implies $(\gamma I)^TM (\gamma I) = M$, which means $\gamma^2 = 1$. So, $\gamma = \pm 1.$ Thus, we can also distinguish between different types of $k$-involutions by seeing if $A^2 = I$ or $A^2 = -I$. This gives the four types of $k$-involutions, which are outlined in Table \ref{InvDefSo}.
\begin{table}[h] 
\centering
\caption {The various possible types of $k$-involutions of $\So(n,k,\beta)$}  \label{InvDefSo}
\begin{tabular}[t]{|c||c|c|}
\hline  & $ A \in \oo(n,k,\beta)$ & $A \in \oo(n,k[\sqrt{\alpha}], \beta) \setminus \oo(n,k,\beta)$ \\
\hline  \hline $A^2 = I$ & Type 1 & Type 2 \\ 
\hline  $A^2 = -I$ & Type 3 & Type 4 \\
\hline
\end{tabular}
\end{table}
It follows from our characterization of $k$-automorphisms that when $n$ is odd, that Type 2 and Type 4 $k$-involutions do not occur. But, we also see that if $n$ is odd and $A$ is orthogonal, then $A^2$ must have determinant 1. So, we see in addition that Type 3 $k$-involutions can also only occur when $n$ is even.

In the following theorem, we show that an isomorphy class of $k$-involutions will lie neatly into exactly one of these types of $k$-involutions. First we need a lemma with a condition equivalent to isomorphism over $\oo(n,k,\beta)$.

\begin{lem}
\label{TidyLem}
Assume $\Inn_A$ and $\Inn_B$ are $k$-involutions of $\So(n,k,\beta)$ where $A$ lies in $\oo(n,k,\beta)$ or in $\oo(n,k[\sqrt{\alpha}],\beta)$ and each entry of $A$ is a $k$-multiple of $\sqrt{\alpha}$, and likewise for $B$. Then, $\Inn_A$ and $\Inn_B$ are isomorphic over $\oo(n,k,\beta)$ (or $\So(n,k,\beta)$) if and only if there exists $Q \in \oo(n,k,\beta)$ (or $\So(n,k,\beta)$) such that $Q^{-1}AQ = B$ or $-B$.
\end{lem}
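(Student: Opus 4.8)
The plan is to unravel what isomorphism over $\oo(n,k,\beta)$ means for inner automorphisms and reduce it to a matrix conjugation condition, tracking the inevitable scalar ambiguity that the $\pm B$ accounts for. By definition, $\Inn_A$ and $\Inn_B$ are isomorphic over $\oo(n,k,\beta)$ precisely when there is $\phi = \Inn_Q$ with $Q \in \oo(n,k,\beta)$ such that $\Inn_B = \phi^{-1}\Inn_A\phi$ as automorphisms of $\So(n,k,\beta)$. First I would compose these inner maps: $\phi^{-1}\Inn_A\phi = \Inn_{QAQ^{-1}}$ when acting by $X \mapsto C^{-1}XC$, so the condition becomes $\Inn_B = \Inn_{QAQ^{-1}}$ as maps on $\So(n,k,\beta)$ (I will fix the exact placement of inverses against the paper's convention $\Inn_A(X)=A^{-1}XA$, which makes $\Inn_A\Inn_C = \Inn_{CA}$, so the conjugate of $\Inn_A$ by $\Inn_Q$ is $\Inn_{Q^{-1}AQ}$). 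Either way the content is that two inner automorphisms agree on $\So(n,k,\beta)$ iff the conjugated matrix $Q^{-1}AQ$ and $B$ differ by an element inducing the identity.

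The heart of the argument is therefore the claim that $\Inn_B = \Inn_C$ on $\So(n,k,\beta)$ forces $C = \pm B$, where here $C = Q^{-1}AQ$. I would derive this from Lemma \ref{IdentityLem}: if $\Inn_B = \Inn_C$ then $\Inn_{CB^{-1}} = \id$ on $\So(n,k,\beta)$, so since we are assuming all orthogonal groups are friendly, $CB^{-1} = \lambda I$ for some $\lambda \in \overline{k}^*$, i.e. $C = \lambda B$. The next step is to pin down $\lambda$ to $\pm 1$. Both $B$ and $C = Q^{-1}AQ$ are orthogonal (conjugation of the orthogonal $A$ by the orthogonal $Q$ stays in $\oo(n,\overline{k},\beta)$, since $(Q^{-1}AQ)^TM(Q^{-1}AQ) = M$ follows from $Q^TMQ=M$ and $A^TMA=M$), so $C = \lambda B$ with both orthogonal gives $\lambda^2 B^TMB = C^TMC = M = B^TMB$, whence $\lambda^2 = 1$ and $\lambda = \pm 1$. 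This is exactly the same scalar-squaring computation used in the discussion preceding Table \ref{InvDefSo}, and it explains the $B$-or-$-B$ disjunction in the statement.

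For the converse direction I would simply observe that if $Q^{-1}AQ = \pm B$ for some $Q \in \oo(n,k,\beta)$, then $\Inn_{Q^{-1}AQ} = \Inn_{\pm B} = \Inn_B$ since $\Inn$ is insensitive to nonzero scalar multiples (the scalar cancels in $C^{-1}XC$), and hence $\Inn_B = \Inn_Q^{-1}\Inn_A\Inn_Q$ realizes the isomorphism over $\oo(n,k,\beta)$. The same chain of reasoning restricts verbatim to the $\So(n,k,\beta)$ case by taking $Q \in \So(n,k,\beta)$ throughout. I expect the main obstacle to be bookkeeping rather than conceptual: I must apply Lemma \ref{IdentityLem} to the matrix $CB^{-1}$ (requiring $B$ invertible, which holds as $B \in \Gl(n,\overline{k})$) and must be careful that the hypothesis on $A$ and $B$ having entries that are $k$-multiples of $\sqrt{\alpha}$ is not actually needed for this lemma — it is carried along only to match the ambient setting — so the only genuinely delicate point is verifying orthogonality of the conjugate $Q^{-1}AQ$ and correctly matching the paper's $\Inn_A(X) = A^{-1}XA$ convention so the conjugation identity comes out right.
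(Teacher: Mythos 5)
Your proof is correct, and its skeleton matches the paper's: the forward direction is the same scalar-cancellation computation, and the converse reduces, exactly as in the paper, to the observation that $Q^{-1}AQB^{-1}$ conjugates every element of $\So(n,k,\beta)$ trivially. Where you genuinely differ is in how that observation is converted into $Q^{-1}AQ = \pm B$. The paper says that $Q^{-1}AQB^{-1}$ commutes with all of $\So(n,k,\beta)$ and then appeals to the \emph{center} of $\So(n,k,\beta)$ being $\{I\}$ ($n$ odd) or $\{\pm I\}$ ($n$ even); you instead apply Lemma \ref{IdentityLem} to get $Q^{-1}AQB^{-1} = \lambda I$ for some $\lambda \in \overline{k}^*$, and then pin down $\lambda = \pm1$ from the orthogonality of both $Q^{-1}AQ$ and $B$ via $\lambda^2 M = (\lambda B)^TM(\lambda B) = M$. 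Your route is the more watertight of the two: the matrix $Q^{-1}AQB^{-1}$ need not lie in $\So(n,k,\beta)$ at all (for Type 2 or Type 4 involutions $A$ and $B$ have entries in $k[\sqrt{\alpha}]\setminus k$, and a priori the product could even involve two different quadratic extensions), so what is really needed is that any element of $\Gl(n,\overline{k})$ centralizing $\So(n,k,\beta)$ is a scalar matrix --- which is precisely Lemma \ref{IdentityLem}, valid under the paper's standing friendliness assumption --- rather than a statement about the center of $\So(n,k,\beta)$ itself; the orthogonality argument then replaces the paper's implicit identification of the scalar with $\pm1$. You are also right that the ``entries are $k$-multiples of $\sqrt{\alpha}$'' hypothesis is never used in this lemma: only invertibility, orthogonality over $\overline{k}$, and friendliness enter. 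In short: same decomposition of the problem, but your choice of key lemma quietly repairs a small imprecision in the paper's own write-up, at no extra cost in hypotheses.
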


\begin{proof}
 First assume there exists $Q \in \oo(n,k,\beta)$ (or $\So(n,k,\beta)$) such that $Q^{-1}AQ = B$ or $-B$. Then, for all $U \in \So(n,k,\beta)$, we have 
\begin{align*}
\Inn_Q\Inn_A \Inn_{Q^{-1}} (U) &= Q^{-1}A^{-1}QUQ^{-1}AQ\\ 
&= (Q^{-1}AQ)^{-1}U(Q^{-1}AQ)\\
&= (\pm B)^{-1}U(\pm B)\\ 
&= B^{-1}UB\\ 
&= \Inn_B(U).
\end{align*}
So, $\Inn_Q\Inn_A \Inn_{Q^{-1}} =\Inn_B$. That is, $\Inn_A$ and $\Inn_B$ are isomorphic over $\oo(n,k,\beta)$ (or $\So(n,k,\beta)$).

To prove the converse, we now assume that $\Inn_A$ and $\Inn_B$ are isomorphic over $\oo(n,k,\beta)$ (or $\So(n,k,\beta)$). So, there exists $Q \in \oo(n,k,\beta)$ (or $\So(n,k,\beta)$) such that $\Inn_Q\Inn_A \Inn_{Q^{-1}} =\Inn_B$. Thus, for all $U \in \So(n,k,\beta)$, we have 

$$Q^{-1}A^{-1}QUQ^{-1}AQ = B^{-1}UB,$$

which implies 

$$BQ^{-1}A^{-1}QUQ^{-1}AQB^{-1} = U.$$

So, $Q^{-1}AQB^{-1} $ commutes with all elements of $\So(n,k,\beta)$. The center of $\So(n,k,\beta)$ is $\{I \}$ if $n$ is odd and  $\{ I, -I \}$ if n is even. So, $Q^{-1}AQ = B$ or $-B$. 

\end{proof}

\begin{theorem}
\label{TypesAreTidy}
Assume $\Inn_A$ and $\Inn_B$ are $k$-involutions of $\So(n,k,\beta)$ where $A$ lies in $\oo(n,k,\beta)$ or in $\oo(n,k[\sqrt{\alpha}],\beta)$ and each entry of $A$ is a $k$-multiple of $\sqrt{\alpha}$, and likewise for $B$. If $\Inn_A$ and $\Inn_B$ are isomorphic over $\oo(n,k,\beta)$, then they must be $k$-involutions of the same type.
\end{theorem}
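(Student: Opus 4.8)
The plan is to reduce isomorphy to a concrete matrix conjugacy via Lemma \ref{TidyLem}, and then verify that each of the two binary invariants defining the four types of Table \ref{InvDefSo} is preserved under that conjugacy. Recall that the type of a $k$-involution $\Inn_A$ is determined by two independent pieces of data: the \emph{row} of the table (whether $A^2 = I$ or $A^2 = -I$) and the \emph{column} (whether $A \in \oo(n,k,\beta)$ or $A \in \oo(n,k[\sqrt{\alpha}],\beta) \setminus \oo(n,k,\beta)$). It therefore suffices to show that $A$ and $B$ agree on both invariants.

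First I would invoke Lemma \ref{TidyLem}: since $\Inn_A$ and $\Inn_B$ are isomorphic over $\oo(n,k,\beta)$, there is some $Q \in \oo(n,k,\beta)$ with $Q^{-1}AQ = B$ or $Q^{-1}AQ = -B$. For the row invariant, note that $B^2 = (\pm B)^2 = (Q^{-1}AQ)^2 = Q^{-1}A^2Q$. Hence $A^2 = I$ forces $B^2 = Q^{-1}IQ = I$, and $A^2 = -I$ forces $B^2 = -I$; so $A$ and $B$ lie in the same row.

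For the column invariant, the key observation is that conjugation by $Q$ preserves the field generated by the matrix entries. Since $\oo(n,k,\beta)$ is a group, both $Q$ and $Q^{-1}$ have all entries in $k$. Thus if $A \in \oo(n,k,\beta)$, then $\pm B = Q^{-1}AQ$ is a product of matrices with entries in $k$, so $B$ has entries in $k$; as $B$ is orthogonal by hypothesis this yields $B \in \oo(n,k,\beta)$. The reverse implication follows identically from $A = \pm QBQ^{-1}$. Hence $A \in \oo(n,k,\beta)$ if and only if $B \in \oo(n,k,\beta)$, placing $A$ and $B$ in the same column. Combining the two conclusions shows they have the same type. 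I would also record that the $\pm$ ambiguity from Lemma \ref{TidyLem} is harmless, since $(-B)^2 = B^2$ and $-B$ has the same field of entries as $B$, so neither invariant can detect the sign.

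The only point needing a little care—and the closest thing to an obstacle—is confirming that the two columns are genuinely disjoint, so that the field-of-entries argument above actually separates them. Here I would use the structural assumption on $A$ and $B$: in the extension case every entry is a $k$-multiple of $\sqrt{\alpha}$ with $\sqrt{\alpha} \notin k$, and since such a matrix is invertible at least one of these entries is a \emph{nonzero} multiple of $\sqrt{\alpha}$, hence genuinely outside $k$. Thus ``$A \in \oo(n,k,\beta)$'' is equivalent to ``all entries of $A$ lie in $k$,'' which is precisely the property shown above to transfer across the conjugacy in both directions. Consequently $A$ and $B$ cannot fall on opposite sides of the column dichotomy, and the proof concludes.
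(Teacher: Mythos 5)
Your proposal is correct and follows essentially the same route as the paper: invoke Lemma \ref{TidyLem} to obtain $Q \in \oo(n,k,\beta)$ with $Q^{-1}AQ = \pm B$, then check that this conjugacy preserves both the condition $A^2 = \pm I$ and membership in $\oo(n,k,\beta)$. The paper merely asserts these two preservation facts, whereas you spell out the computation $B^2 = Q^{-1}A^2Q$ and the field-of-entries argument (including the observation that an invertible matrix whose entries are $k$-multiples of $\sqrt{\alpha}$ must have an entry outside $k$), so your write-up is a fleshed-out version of the paper's proof rather than a different one.
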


\begin{proof}

Assume $\Inn_A$ and $\Inn_B$ are isomorphic over $\oo(n,k,\beta)$. By Lemma \ref{TidyLem} we can assume there exists $Q \in \oo(n,k,\beta)$ such that $Q^{-1}AQ = B$ or $-B$. This implies that 
\begin{enumerate}
\item $A \in \oo(n,k,\beta)$ if and only if $B \in \oo(n,k,\beta)$; and

\item $A^2 = I$ if and only if $B^2 = I$.
\end{enumerate} 

Thus, $\Inn_A$ and $\Inn_B$ must be of the same type.
\end{proof}

We introduce the notation $E(A, \lambda)$ to refer to the eigenspace of a matrix $A$ corresponding to eigenvalue $\lambda$, where we assume the vectors lie in $\overline{k}^n$. In practice, we will be concerned with basis vectors that lie either in $k$, or a particular quadratic extension of $k$.

\subsection{Type 1 $k$-involutions}

We now find a structured form for the matrices of all types of $k$-involutions. We begin with Type 1 $k$-involutions. When $n$ is odd, these are the only $k$-involutions. 

\begin{lem}
\label{Type1ClassSo}
Suppose $\theta$ is a Type 1 $k$-involution of $\So(n,k,\beta)$. Then, there exists $A \in \oo(n,k,\beta)$ such that $A = X \left(\begin{smallmatrix}-I_s & 0 \\0 & I_t\end{smallmatrix}\right) X^{-1}$ where $s+t =n$, $s \le t$, and $$X  = \left(\begin{matrix}x_1 & x_2 & \cdots & x_n \end{matrix}\right)\in \Gl(n,k),$$ where the $x_i$ are orthogonal eigenvectors of $A$, meaning $X^TMX$ is diagonal.
\end{lem}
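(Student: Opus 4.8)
The plan is to exploit the defining data of a Type 1 involution, namely that $\theta = \Inn_A$ with $A \in \oo(n,k,\beta)$ and $A^2 = I$, and to diagonalize $A$ over $k$ before refining the diagonalizing basis to be $\beta$-orthogonal. First I would observe that since $A^2 = I$, the minimal polynomial of $A$ divides $x^2 - 1 = (x-1)(x+1)$, which splits into distinct linear factors over $k$ because $\chr(k) \ne 2$. Hence $A$ is diagonalizable over $k$ with eigenvalues in $\{1,-1\}$, so $k^n = E(A,-1) \oplus E(A,1)$. Set $s = \dim E(A,-1)$ and $t = \dim E(A,1)$, so that $s+t=n$. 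Since $\theta \ne \id$, Lemma \ref{IdentityLem} says $A$ is not scalar, so both summands are nonzero. Because $\Inn_{-A} = \Inn_A$ and $-A$ again lies in $\oo(n,k,\beta)$, replacing $A$ by $-A$ if necessary (which interchanges the two eigenspaces) lets me assume $s \le t$.

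The key step is to show that the two eigenspaces are $\beta$-orthogonal. Combining $A^T M A = M$ with $A^2 = I$ (so $A^{-1} = A$) yields $A^T M = M A$; equivalently $A$ is self-adjoint for $\beta$, i.e. $\beta(Ax,y) = \beta(x,Ay)$ for all $x,y$. Then for $x \in E(A,-1)$ and $y \in E(A,1)$ one computes $-\beta(x,y) = \beta(Ax,y) = \beta(x,Ay) = \beta(x,y)$, so $2\beta(x,y) = 0$ and hence $\beta(x,y) = 0$ since $\chr(k) \ne 2$. Thus the two eigenspaces are orthogonal.

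Next I would argue that $\beta$ restricts to a non-degenerate form on each eigenspace: any vector in $E(A,-1)$ that is orthogonal to all of $E(A,-1)$ is, by the orthogonality just established, also orthogonal to all of $E(A,1)$, hence to all of $k^n$, forcing it to be $0$ by non-degeneracy of $\beta$; the same argument applies to $E(A,1)$. Since a non-degenerate symmetric bilinear form over a field of characteristic $\ne 2$ admits an orthogonal basis (the diagonalization fact recorded in Section~2), I can choose a $\beta$-orthogonal basis $x_1,\dots,x_s$ of $E(A,-1)$ and a $\beta$-orthogonal basis $x_{s+1},\dots,x_n$ of $E(A,1)$. Together with the cross-orthogonality of the two eigenspaces, the set $\{x_1,\dots,x_n\}$ is a $\beta$-orthogonal basis of $k^n$ consisting of eigenvectors of $A$.

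Finally, assembling $X = (x_1 \cdots x_n) \in \Gl(n,k)$, orthogonality gives that $X^T M X$ is diagonal, while $Ax_i = -x_i$ for $i \le s$ and $Ax_i = x_i$ for $i > s$ yields $AX = X\diag(-I_s, I_t)$, that is $A = X\diag(-I_s, I_t)X^{-1}$, as required. I expect the main obstacle to be the self-adjointness observation in the second paragraph, which is precisely what converts the defining relation of the orthogonal group into orthogonality of the eigenspaces; once that is in hand, the remaining steps are standard applications of diagonalizability and of the diagonalization of symmetric bilinear forms.
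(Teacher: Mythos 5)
Your proof is correct and takes essentially the same route as the paper's: diagonalize $A$ over $k$ (possible since $A^2=I$ and $\chr(k)\ne 2$), use $A^TMA=M$ together with $A^{-1}=A$ to conclude the two eigenspaces are $\beta$-orthogonal, and then diagonalize the restriction of $\beta$ on each eigenspace to build $X$. The only difference is presentational: you phrase the key step as self-adjointness of $A$ with respect to $\beta$, while the paper carries out the equivalent matrix computation showing $Y^TMY$ is block diagonal and then applies a block congruence $N$; the mathematical content is identical.
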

\begin{proof}
Since $A^2 = I$, then all eigenvalues of $A$ are $\pm 1$. Since there are no repeated roots in the minimal polynomial of $A$, then we see that $A$ is diagonalizable. We want to construct bases for $E(A,1)$ and $E(A,-1)$ such that all the basis vectors lie in $k^n$. Let $s = \dim(E(A,-1))$ and $t = \dim(E(A,1))$, and observe that $s+t = n$ since $A$ is diagonalizable. If $s >t$, then replace $A$ with $-A$, and use this matrix instead. (It will induce the same $k$-involution.) Let $\{z_1,...,z_n\}$ be a basis for $k^n$. For each $i$, let $u_i = (A-I)z_i.$ Note that $$Au_i = A(A-I)z_i = -(A-I)z_i = -u_i.$$ So, $\{u_1,...,u_n\}$ must span $E(A,-1)$. Thus, we can appropriately choose $s$ of these vectors and form a basis for $E(A,-1)$. Label these basis vectors as $y_1,...,y_s$. We can similarly form a basis for $E(A,1)$. We shall call these vectors $y_{s+1},...,y_n$. Let $Y$ be the matrix with the vectors $y_1,...,y_n$ as its columns. Then, by construction, $$Y^{-1}AY =  \left(\begin{array}{cc}-I_s & 0 \\0 & I_t\end{array}\right).$$ We can rearrange to get $$A = Y \left(\begin{array}{cc}-I_s & 0 \\0 & I_t\end{array}\right) Y^{-1}.$$
Recall that $A^T = MAM^{-1}$, since $A \in \oo(n,k,\beta)$. So, 
$$\left( Y \left(\begin{array}{cc}-I_s & 0 \\0 & I_t\end{array}\right) Y^{-1} \right)^T = M \left( Y \left(\begin{array}{cc}-I_s & 0 \\0 & I_t\end{array}\right) Y^{-1} \right) M^{-1}.$$
This implies 
$$(Y^{-1})^T \left(\begin{array}{cc}-I_s & 0 \\0 & I_t\end{array}\right) Y^T = MY \left(\begin{array}{cc}-I_s & 0 \\0 & I_t\end{array}\right) (MY)^{-1}$$
which means
$$ \left(\begin{array}{cc}-I_s & 0 \\0 & I_t\end{array}\right) Y^TMY = Y^TMY \left(\begin{array}{cc}-I_s & 0 \\0 & I_t\end{array}\right) .$$
So, $Y^TMY = \left(\begin{smallmatrix}Y_1 & 0 \\0 & Y_2\end{smallmatrix}\right)$, where $Y_1$ is $s \times s$, $Y_2$ is $t \times t$, and both are symmetric. It follows that there exists $N = \left(\begin{smallmatrix}N_1 & 0 \\0 & N_2\end{smallmatrix}\right) \in \Gl(n,k)$ such that $N^TY^TMYN$ is diagonal. Let $X = YN$. Then, 
\begin{align*}
 X \left(\begin{array}{cc}-I_s & 0 \\0 & I_t\end{array}\right) X^{-1} &= YN\left(\begin{array}{cc}-I_s & 0 \\0 & I_t\end{array}\right) (YN)^{-1}\\
&= Y \left(\begin{array}{cc}N_1 & 0 \\0 & N_2\end{array}\right) \left(\begin{array}{cc}-I_s & 0 \\0 & I_t\end{array}\right) \left(\begin{array}{cc}N_1^{-1} & 0 \\0 & N_2^{-1}\end{array}\right)Y^{-1}\\
&= Y  \left(\begin{array}{cc}-I_s & 0 \\0 & I_t\end{array}\right) Y^{-1}\\ 
&= A,
\end{align*}
 where $X^TMX$ is diagonal.  It follows from this last observation that the column vectors of $X$ must be orthogonal with respect to $\beta$.
\end{proof}

Now we show conditions equivalent to isomorphy. Note that we say two matrices are $A$ and $B$ are {\it congruent} over a group $G$ if there exists $Q \in G$ such that $Q^{-1}AQ = B$.

\begin{theorem}
\label{type1lemSo}
Suppose $\theta$ and $\phi$ are two Type 1 $k$-involutions of $\So(n,k,\beta)$ where $\theta = \Inn_A$ and $\phi = \Inn_B$. Then, $A = X \left(\begin{smallmatrix}-I_{m_A} & 0 \\0 & I_{n-m_A}\end{smallmatrix}\right) X^{-1}$ and  $B = Y\left(\begin{smallmatrix}-I_{m_B} & 0 \\0 & I_{n-m_B}\end{smallmatrix}\right) Y^{-1}$ where $m_A, m_B \le \frac{n}{2}$, and $$X  = \left(\begin{matrix}x_1 & x_2 & \cdots & x_n \end{matrix}\right), Y  = \left(\begin{matrix}y_1 & y_2 & \cdots & y_n \end{matrix}\right)\in \Gl(n,k)$$ 
have columns that are orthogonal eigenvectors of $A$ and $B$ respectively. We also have the diagonal matrices $$X^TMX = \left(\begin{array}{cc}X_1 & 0 \\0 & X_2\end{array}\right)$$ and $$Y^TMY = \left(\begin{array}{cc}Y_1 & 0 \\0 & Y_2\end{array}\right).$$ 
The following are equivalent:
 \begin{enumerate}
 
 \item $\theta$ is isomorphic to $\phi$ over $\So(n,k,\beta)$.
 
  \item $A$ is conjugate to $B$ or $-B$ over $\So(n,k,\beta)$.
  
  \item $ X_1$ is congruent to $Y_1$ over $\Gl(m,k)$ and $X_2$ is congruent to $Y_2$ over $\Gl(n-m,k)$, or $ X_1$ is congruent to $Y_2$ over $\Gl(\frac{n}{2},k)$ and $X_2$ is congruent to $Y_1$ over $\Gl(\frac{n}{2},k)$.
  
   \item $\theta$ is isomorphic to $\phi$ over $\oo(n,k,\beta)$.
   
     \item $A$ is conjugate to $B$ or $-B$ over $\oo(n,k,\beta)$.

 \end{enumerate}
\end{theorem}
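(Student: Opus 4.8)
The plan is to treat statements $(1)\Leftrightarrow(2)$ and $(4)\Leftrightarrow(5)$ as free consequences of Lemma~\ref{TidyLem}, so that the entire content reduces to proving the two equivalences $(5)\Leftrightarrow(3)$ and $(2)\Leftrightarrow(5)$. Since $\So(n,k,\beta)\subseteq\oo(n,k,\beta)$, the implication $(2)\Rightarrow(5)$ is immediate, and all five statements become linked once I establish $(5)\Rightarrow(3)$, $(3)\Rightarrow(5)$, and $(5)\Rightarrow(2)$.

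To prove $(5)\Rightarrow(3)$, suppose $Q\in\oo(n,k,\beta)$ with $Q^{-1}AQ=B$. From $AQ=QB$ each eigenvector of $B$ is sent by $Q$ into the matching eigenspace of $A$, so $Q$ carries $E(B,-1)$ onto $E(A,-1)$ and $E(B,1)$ onto $E(A,1)$; this forces $m_A=m_B=:m$ and gives $QY=XP$ for a block-diagonal $P=\diag(P_1,P_2)$ with $P_1\in\Gl(m,k)$ and $P_2\in\Gl(n-m,k)$. Feeding this into $Q^TMQ=M$ yields $Y^TMY=(QY)^TM(QY)=P^T(X^TMX)P$, and reading off the two diagonal blocks gives $P_1^TX_1P_1=Y_1$ and $P_2^TX_2P_2=Y_2$, which is exactly the first alternative of $(3)$. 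When instead $Q^{-1}AQ=-B$, I would reduce to the previous case by writing $-B=\tilde Y D\tilde Y^{-1}$, where $D=\diag(-I_{n/2},I_{n/2})$, $\tilde Y=YS$, and $S$ is the block-swap permutation; then $\tilde Y^TM\tilde Y=\diag(Y_2,Y_1)$, the eigenspace dimensions force $m_A=m_B=n/2$, and the same block computation produces $X_1$ congruent to $Y_2$ and $X_2$ congruent to $Y_1$, the second alternative.

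For $(3)\Rightarrow(5)$, given congruences $P_1^TX_1P_1=Y_1$ and $P_2^TX_2P_2=Y_2$ I would simply set $P=\diag(P_1,P_2)$ and $Q=XPY^{-1}$. A direct substitution using $X^TMX=\diag(X_1,X_2)$ and $Y^TMY=\diag(Y_1,Y_2)$ shows $Q^TMQ=M$, so $Q\in\oo(n,k,\beta)$; and because $P$ commutes with $D=\diag(-I_m,I_{n-m})$, one gets $Q^{-1}AQ=YP^{-1}DPY^{-1}=YDY^{-1}=B$. The second alternative is handled identically after replacing $Y$ by $\tilde Y=YS$, which turns the target into $-B$.

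The step I expect to be the crux is $(5)\Rightarrow(2)$: upgrading an orthogonal conjugator to one of determinant $1$. The $Q$ produced above lies in $\oo(n,k,\beta)$ and so has $\det Q=\pm1$, but it need not lie in $\So(n,k,\beta)$. To correct the sign I would multiply by a suitable element of the centralizer of $A$: if $R\in\oo(n,k,\beta)$ satisfies $\det R=-1$ and $RA=AR$, then $RQ\in\oo(n,k,\beta)$ has determinant $1$ and still satisfies $(RQ)^{-1}A(RQ)=Q^{-1}AQ$, equal to $B$ (or $-B$). Such an $R$ exists because centralizing $A=XDX^{-1}$ means preserving each eigenspace, i.e. having the form $X\diag(R_1,R_2)X^{-1}$ with $R_1,R_2$ in the orthogonal groups of the forms $X_1,X_2$; since $\theta\ne\id$ forces $1\le m\le n-1$, the block $X_1$ is a non-degenerate form of dimension $\ge1$, so a reflection in an anisotropic vector gives $R_1$ with $\det R_1=-1$, and taking $R_2=I$ finishes the construction. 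This reflection argument is the only place where non-degeneracy of the restricted forms and the nontriviality of $\theta$ are genuinely used, and it is precisely what makes $\oo$-conjugacy and $\So$-conjugacy coincide for Type~1 involutions; everything else is bookkeeping with the block decomposition inherited from Lemma~\ref{Type1ClassSo}.
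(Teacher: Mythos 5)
Your proposal is correct and follows essentially the same route as the paper's proof: isomorphy is reduced to conjugacy by $\pm$-matrices via Lemma \ref{TidyLem}, the conjugacy-to-congruence direction uses the identical eigenspace-matching argument (the intertwiner $X^{-1}QY$ is block diagonal, then $Q^TMQ=M$ transfers to the congruences $P_j^TX_jP_j=Y_j$), and the converse uses the same construction $Q=XPY^{-1}$. Your determinant correction, multiplying by the reflection $X\diag(-1,1,\dots,1)X^{-1}$ in the centralizer of $A$, is exactly the paper's trick of negating the first column of $X$; the paper merely embeds that fix inside the implication $(iii)\Rightarrow(ii)$ and closes the loop via $(v)\Rightarrow(iii)$, whereas you isolate it as a standalone step $(v)\Rightarrow(ii)$.
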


\begin{proof}
The equivalence of $(i)$ and $(ii)$ follows from Lemma \ref{TidyLem}, as does the equivalence of $(iv)$ and $(v)$.

Next we show that $(ii)$ implies $(iii)$. First suppose that $Q^{-1}AQ = B$ for some $Q \in \So(n,k,\beta)$.  $Q^{-1}AQ = B$ implies $$Q^{-1} X \left(\begin{array}{cc}-I_{m_A} & 0 \\0 & I_{n-m_A}\end{array}\right)  X^{-1}Q = Y\left(\begin{array}{cc}-I_{m_B} & 0 \\0 & I_{n-m_B}\end{array}\right)  Y^{-1}.$$ Since the matrices on both sides of the equality above must have the same eigenvalues with the same multiplicities, then we see that $m_A = m_B$. Let $m = m_A = m_B$, and recall from Theorem \ref{sltheorem1} that $$-I_{m,n-m} = \left(\begin{array}{cc}-I_{m} & 0 \\0 & I_{n-m}\end{array}\right).$$ Rearranging the previous equation, we have $$I_{m,n-m}X^{-1}QY = X^{-1}QYI_{m,n-m},$$ which tells us that $X^{-1}QY = \left(\begin{smallmatrix}R_1 & 0 \\0 & R_2\end{smallmatrix}\right),$ where $R_1 \in \Gl(m,k)$ and $R_2\in \Gl(n-m,k)$. Rearranging, we have that $QY = X\left(\begin{smallmatrix}R_1 & 0 \\0 & R_2\end{smallmatrix}\right).$ Since $Q \in \oo(n,k,\beta)$, then we know that $Q^TMQ=M$.

So, 
\begin{align*}
Y^TMY &= Y^TQ^TMQY\\
&= \left(\begin{array}{cc}R_1 & 0 \\0 & R_2\end{array}\right)^T(X^TMX)\left(\begin{array}{cc}R_1 & 0 \\0 & R_2\end{array}\right).
\end{align*}
From here we see that 
$Y_1= R_1^TX_1R_1$ and 
$Y_2 = R_2^TX_2R_2.$

Now suppose that $Q^{-1}AQ = -B$ for some $Q \in \So(n,k,\beta)$.  This implies $$Q^{-1} X \left(\begin{array}{cc}-I_{m_A} & 0 \\0 & I_{n-m_A}\end{array}\right)  X^{-1}Q = Y\left(\begin{array}{cc}I_{m_B} & 0 \\0 & -I_{n-m_B}\end{array}\right)  Y^{-1}.$$ Since the matrices on both sides of the equality above must have the same eigenvalues with the same multiplicities, then we see that $m_A = n-m_B$. Since $m_A, m_B \le \frac{n}{2}$, then it follows that $m_A = m_B = \frac{n}{2}$. Rearranging the previous equation, we have $$I_{\frac{n}{2},\frac{n}{2}}X^{-1}QY = X^{-1}QYI_{\frac{n}{2},\frac{n}{2}},$$ which tells us that $X^{-1}QY = \left(\begin{smallmatrix} 0 &R_1 \\ R_2 & 0 \end{smallmatrix}\right),$ where $R_1, R_2 \in \Gl(\frac{n}{2},k)$. Rearranging, we have that $QY = X\left(\begin{smallmatrix} 0 &R_1 \\ R_2 & 0 \end{smallmatrix}\right).$ Since $Q \in \oo(n,k,\beta)$, then we know that $Q^TMQ=M$.

So, 
\begin{align*}
Y^TMY &= Y^TQ^TMQY\\ 
&=\left(\begin{array}{cc} 0 &R_1 \\ R_2 & 0 \end{array}\right)^T(X^TMX)\left(\begin{array}{cc} 0 &R_1 \\ R_2 & 0 \end{array}\right).
\end{align*}
From here we see that 
$Y_2= R_1^TX_1R_1$ and 
$Y_1 = R_2^TX_2R_2.$

This shows that $(ii)$ implies $(iii)$.

Now we show that $(iii)$ implies $(ii)$. Assume that $(iii)$ is the case. Specifically, assume that  $R_1 \in \Gl(m,k)$ and $R_2 \in \Gl(n-m,k)$ such that $Y_1= R_1^TX_1R_1$ and 
$Y_2 = R_2^TX_2R_2.$ Let $R = \left(\begin{smallmatrix}R_1 & 0 \\0 & R_2\end{smallmatrix}\right).$ So, we have $Y^TMY = R^T(X^TMX)R$. Let $Q = XRY^{-1}$. We will now show that $Q^{-1}AQ = B$ and that $Q \in \So(n,k,\beta)$.
\begin{align*}
Q^{-1}AQ &= (XRY^{-1})^{-1}A(XRY^{-1})\\
&= YR^{-1}X^{-1}AXRY^{-1}\\
&= YR^{-1}(-I_{m,n-m})RY^{-1}\\
&=  Y(-I_{m,n-m})Y^{-1} = B.
\end{align*}

Next, we must show that $Q \in \So(n,k,\beta)$. We first show that $Q^TMQ = M$. Recall that $Y^TMY = R^T(X^TMX)R$. So, 
\begin{align*}
Q^TMQ &= (XRY^{-1})^{T}M(XRY^{-1})\\ 
&= (Y^{-1})^T(R^TX^TMXR)Y^{-1}\\ 
&= (Y^{-1})^T(Y^TMY)Y^{-1}\\ 
&= M.
\end{align*}
In the event that $\det(Q) = -1$, then we can replace the first column of $X$ with its negative. This will have no effect on $R$ or $Y$, so the new $Q = XRY^{-1}$ have determinant 1, and it will still be the case that $Q^{-1}AQ = B$ and $Q^TMQ = M$. So, $Q \in \So(n,k,\beta)$.

If instead we assume that  $R_1 \in \Gl(m,k)$ and $R_2 \in \Gl(n-m,k)$ such that $Y_2= R_1^TX_1R_1$ and 
$Y_1 = R_2^TX_2R_2,$ then if we let $R = \left(\begin{smallmatrix} 0 &R_1 \\ R_2 & 0 \end{smallmatrix}\right)$, then we can let $Q = XRY^{-1}$ and get that $Q^{-1}AQ = -B$ and $Q \in \So(n,k,\beta)$. This shows that $(iii)$ implies $(ii)$.

We now show that $(iv)$ and $(v)$ are equivalent to the previous three conditions. First, we note that it is clear that $(i)$ implies $(iv)$. So, we need only show that $(iv)$ or $(v)$ implies one of the other three conditions. But, $(v)$ implies $(iii)$ from an argument very similar to the argument where we showed that $(ii)$ implies $(iii)$. Thus, all the conditions are equivalent.
\end{proof}

We note that this Theorem shows that isomorphy over $\So(n,k,\beta)$ and $\oo(n,k,\beta)$ are the same for Type 1 $k$-involutions. We will show in an explicit example that this does not occur in the Type 2 case. For the remaining three types of $k$-involutions, we will only find conditions for isomorphy over $\oo(n,k,\beta)$. Again, recall that these three Types of $k$-involutions only occur when $n$ is even. So, when $n$ is odd, we have isomorphy conditions over $\So(n,k,\beta)$. 

\subsection{Type 2 $k$-involutions}

We have a similar characterization of the matrices and isomorphy classes in the Type 2 case. We first prove a result about that characterizes the eigenvectors in the Type 2 case.

\begin{lem}

Suppose $A \in \oo(n,k[\sqrt{\alpha}],\beta) \setminus \oo(n,k,\beta) $ induces a Type 2 $k$-involution of  $\So(n,k,\beta)$ where $\sqrt{\alpha} \not \in k$. Recall that the entries of $A$ are all $k$-multiples of $\sqrt{\alpha}$. Suppose $x,y \in k^n$ such that $x+\sqrt{\alpha}y \in E(A,-1)$. Then, $x-\sqrt{\alpha}y \in E(A,1)$. Likewise, if $u,v \in k^n$ such that $u+\sqrt{\alpha}v \in E(A,1)$. Then, $u-\sqrt{\alpha}v \in E(A,-1)$. Further, $\dim(E(A,1))= \dim(E(A,-1))$.

\end{lem}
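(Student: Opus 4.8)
The plan is to exploit the Galois action of the quadratic extension $k[\sqrt{\alpha}]/k$. Since every entry of $A$ is a $k$-multiple of $\sqrt{\alpha}$, I would first write $A = \sqrt{\alpha}\,C$ with $C \in \M(n,k)$. Let $\sigma$ denote the nontrivial element of $\gal(k[\sqrt{\alpha}]/k)$, so that $\sigma(\sqrt{\alpha}) = -\sqrt{\alpha}$ and $\sigma$ fixes $k$ pointwise; extend $\sigma$ entrywise to vectors in $k[\sqrt{\alpha}]^n$ and to matrices in $\M(n,k[\sqrt{\alpha}])$. The crucial observation is that $\sigma(A) = -A$, because each entry $c_{ij}\sqrt{\alpha}$ (with $c_{ij} \in k$) is sent to $-c_{ij}\sqrt{\alpha}$.

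For the first assertion, suppose $A(x + \sqrt{\alpha}\,y) = -(x + \sqrt{\alpha}\,y)$ with $x,y \in k^n$. Applying $\sigma$ entrywise and using that $\sigma$ is a ring homomorphism fixing $x$ and $y$ while sending $A$ to $-A$ and $\sqrt{\alpha}$ to $-\sqrt{\alpha}$, I obtain $-A(x - \sqrt{\alpha}\,y) = -(x - \sqrt{\alpha}\,y)$, that is $A(x - \sqrt{\alpha}\,y) = x - \sqrt{\alpha}\,y$, so $x - \sqrt{\alpha}\,y \in E(A,1)$. The second assertion, for $u + \sqrt{\alpha}\,v \in E(A,1)$, follows from the identical computation with the two eigenvalues interchanged.

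For the dimension equality, I note that because $A^2 = I$ (Type 2) and $\chr(k) \ne 2$, $A$ is diagonalizable and both eigenspaces are the kernels of the matrices $A \mp I \in \M(n,k[\sqrt{\alpha}])$; hence they are defined over $k[\sqrt{\alpha}]$, their $\overline{k}$-dimensions agree with their $k[\sqrt{\alpha}]$-dimensions, and every eigenvector may be taken in $k[\sqrt{\alpha}]^n$, i.e. of the form $x \pm \sqrt{\alpha}\,y$ with $x,y \in k^n$. The two assertions already proved show that $w \mapsto \sigma(w)$ carries $E(A,-1)$ into $E(A,1)$ and $E(A,1)$ into $E(A,-1)$; since $\sigma^2 = \id$, these are mutually inverse bijections. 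As $\sigma$ is additive and satisfies $\sigma(cw) = \sigma(c)\sigma(w)$ for $c \in k[\sqrt{\alpha}]$, a bijective $\sigma$ sends a $k[\sqrt{\alpha}]$-basis to a $k[\sqrt{\alpha}]$-basis, so $\dim E(A,-1) = \dim E(A,1)$.

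I expect no serious obstacle here: the heart of the argument is simply the identity $\sigma(A) = -A$, after which the eigenvalue swap is automatic. The only points requiring a little care are the bookkeeping that a semilinear bijection preserves dimension and the verification that the eigenspaces are genuinely defined over $k[\sqrt{\alpha}]$, so that the decomposition into "real and imaginary parts" $x,y \in k^n$ is available; both are routine.
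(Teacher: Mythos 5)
Your proof is correct and takes essentially the same route as the paper's: the paper's ``$\sqrt{\alpha}$-conjugation'' is exactly your Galois automorphism $\sigma$, the key identity $\sigma(A) = -A$ (valid because every entry of $A$ is a $k$-multiple of $\sqrt{\alpha}$) is used identically to swap the eigenvalue equations, and the dimension equality is likewise deduced from the fact that conjugation is an invertible map interchanging the two eigenspaces. If anything, your handling of the dimension count is slightly more careful than the paper's, since you first justify that the eigenspaces are defined over $k[\sqrt{\alpha}]$ and that a semilinear bijection carries bases to bases.
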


\begin{proof}

Suppose $a,b \in k$. Then, we refer to the Galois automorphism of the field $k[\sqrt{\alpha}] $ over $k$ that sends $a+\sqrt{\alpha}b $ to $a - \sqrt{\alpha}b$ as ``$\sqrt{\alpha}$-conjugation." Since it is an automorphism, it must preserve multiplication. Further, for a matrix $X = Y +\sqrt{\alpha}Z$ for $Y, Z$ with entries in $k$, we say that $Y -\sqrt{\alpha}Z$ is the ``$\sqrt{\alpha}$-conjugation" of $X$. We note that this mapping will preserve multiplication of matrices. 

Since $$A(x+\sqrt{\alpha}y) = -x-\sqrt{\alpha}y,$$ then we can take the ``$\sqrt{\alpha}$-conjugation" to see that $$(-A)(x-\sqrt{\alpha}) = -x+\sqrt{\alpha}y.$$ We note that the ``$\sqrt{\alpha}$-conjugation" of $A$ is $-A$ because each entry of $A$ is a $k$-multiple of $\sqrt{\alpha}$. We can multiply both side by -1 to see $$A(x-\sqrt{\alpha}) = x-\sqrt{\alpha}y.$$ That is, $x-\sqrt{\alpha}y \in E(A,1)$. This proves the first statement. An analogous argument proves the second.

To see that $\dim(E(A,1))= \dim(E(A,-1))$ is the case, note that the first statement tells us that $\dim(E(A,1)) \le \dim(E(A,-1))$, and that the second statement tells us that $\dim(E(A,1))\ge \dim(E(A,-1))$, since ``$\sqrt{\alpha}$-conjugation" is an invertible operator on $k[\sqrt{\alpha}]^n$.
\end{proof}

We are now able to characterize the Type 2 $k$-involutions.

\begin{lem}
\label{Type2ClassSo}
Suppose $\theta$ is a Type 2 $k$-involution of $\So(n,k,\beta)$. Let $A$ be the orthogonal matrix in $\oo(n,k[\sqrt{\alpha}],\beta)$ such that $\theta = \Inn_A$. Then, $$A = \frac{-\sqrt{\alpha}}{\alpha} X  \left(\begin{array}{cc}0 & I_{\frac{n}{2}} \\ \alpha I_{\frac{n}{2}} & 0\end{array}\right)  X^{-1}$$ where $$X  = \left(\begin{array}{cccccccc}x_1 & x_2 & \cdots & x_{\frac{n}{2}} &y_1 & y_2 & \cdots & y_{\frac{n}{2}} \end{array}\right)\in \Gl(n,k),$$ where 
 for each $i$, we have orthogonal vectors $x_i+\sqrt{\alpha}y_i \in E(A,-1)$ and orthogonal vectors $x_i-\sqrt{\alpha}y_i \in E(A,1)$. Further, $$X^TMX = \left(\begin{array}{cc}X_1 & X_2    \\X_2 & \frac{1}{\alpha}X_1   \end{array}\right)$$ where $X_1$ and $X_2$ are diagonal matrices.

\end{lem}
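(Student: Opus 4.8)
The plan is to mirror the proof of Lemma \ref{Type1ClassSo}, adding the bookkeeping forced by the quadratic extension. Since $\theta$ is a Type 2 $k$-involution we have $A^2 = I$ with $A \ne \pm I$, so $A$ is diagonalizable over $\overline{k}$ with eigenvalues $\pm 1$; by the preceding lemma $\dim E(A,-1) = \dim E(A,1) = \frac{n}{2}$. First I would choose a basis $w_1,\dots,w_{n/2}$ of $E(A,-1)$ with entries in $k[\sqrt{\alpha}]$ (possible because $A+I$ has entries in $k[\sqrt{\alpha}]$ and its kernel is cut out by a linear system over that field), and write each $w_i = x_i + \sqrt{\alpha}\,y_i$ with $x_i,y_i \in k^n$. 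By the preceding lemma the $\sqrt{\alpha}$-conjugates $\overline{w_i} = x_i - \sqrt{\alpha}\,y_i$ form a basis of $E(A,1)$. Setting $X = \left(\begin{smallmatrix} x_1 \cdots x_{n/2} \mid y_1 \cdots y_{n/2}\end{smallmatrix}\right)$ and letting $W$ denote the eigenbasis matrix whose first block of columns is $w_1,\dots,w_{n/2}$ and whose second block is $\overline{w_1},\dots,\overline{w_{n/2}}$, one checks directly that $W = XC$ where $C = \left(\begin{smallmatrix} I & I \\ \sqrt{\alpha}I & -\sqrt{\alpha}I \end{smallmatrix}\right)$.

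Because $W$ is an eigenbasis it is invertible, and $\det C = (-2\sqrt{\alpha})^{n/2} \ne 0$ since $\chr(k) \ne 2$ and $\alpha \ne 0$; hence $X \in \Gl(n,k)$. The relation $AW = W\left(\begin{smallmatrix}-I & 0\\0 & I\end{smallmatrix}\right)$ then gives $A = XC\left(\begin{smallmatrix}-I & 0\\0 & I\end{smallmatrix}\right)C^{-1}X^{-1}$, and a short computation yields $C\left(\begin{smallmatrix}-I & 0\\0 & I\end{smallmatrix}\right)C^{-1} = \left(\begin{smallmatrix}0 & -\frac{1}{\sqrt{\alpha}}I\\ -\sqrt{\alpha}\,I & 0\end{smallmatrix}\right) = \frac{-\sqrt{\alpha}}{\alpha}\left(\begin{smallmatrix}0 & I\\ \alpha I & 0\end{smallmatrix}\right)$, which is exactly the claimed form of $A$ and holds for any eigenbasis of this conjugate-pair shape.

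For the shape of $X^TMX$, I would exploit that $A$ is orthogonal: since $A^2=I$ we get $A^T = MAM^{-1}$, and substituting $A = WDW^{-1}$ with $D = \diag(-I_{n/2},I_{n/2})$ shows $D(W^TMW) = (W^TMW)D$, which forces the off-diagonal blocks of $W^TMW$ to vanish, so $W^TMW = \left(\begin{smallmatrix} P & 0 \\ 0 & R\end{smallmatrix}\right)$ with $P,R$ symmetric. The decisive observation is that the second block of columns of $W$ is the $\sqrt{\alpha}$-conjugate of the first and $\overline{M}=M$, whence $R = \overline{P}$. Transporting this through $W=XC$ via $X^TMX = (C^{-1})^T(W^TMW)C^{-1}$ produces exactly $X^TMX = \left(\begin{smallmatrix}X_1 & X_2 \\ X_2 & \frac{1}{\alpha}X_1\end{smallmatrix}\right)$ with $X_1 = \tfrac14(P+\overline{P})$ and $X_2 = \tfrac{1}{4\sqrt{\alpha}}(P-\overline{P})$, both of which automatically have entries in $k$ because $X^TMX$ does.

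Finally, to arrange that $X_1$ and $X_2$ are diagonal — equivalently that the eigenvectors are $\beta$-orthogonal — I would diagonalize the single symmetric matrix $P$ over $k[\sqrt{\alpha}]$: by the congruence-to-diagonal fact there is $N \in \Gl(\frac{n}{2},k[\sqrt{\alpha}])$ with $N^TPN$ diagonal. Replacing the eigenbasis $W$ by $W\left(\begin{smallmatrix}N & 0\\0 & \overline{N}\end{smallmatrix}\right)$ preserves the conjugate-pair structure (the new second block stays the conjugate of the new first block, so the new $X$ remains over $k$ and is still an eigenbasis) and replaces $P$ by $N^TPN$. Since $N^TPN$ is diagonal, its $k$-part and its $\sqrt{\alpha}$-part are diagonal, so the new $X_1,X_2$ are diagonal, and $P$ diagonal says precisely that the $w_i$, hence the $\overline{w_i}$, are mutually $\beta$-orthogonal. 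I expect this last step to be the main obstacle: one must notice that the modification preserving ``$X$ over $k$'' is conjugation of the two blocks by $N$ and $\overline{N}$ respectively, which works exactly because $R = \overline{P}$.
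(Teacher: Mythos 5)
Your proof is correct; every step checks out, including the identity $C\left(\begin{smallmatrix}-I & 0\\0 & I\end{smallmatrix}\right)C^{-1} = -\frac{\sqrt{\alpha}}{\alpha}\left(\begin{smallmatrix}0 & I\\ \alpha I & 0\end{smallmatrix}\right)$, the block computation $X^TMX = (C^{-1})^T\left(\begin{smallmatrix}P & 0\\0 & \overline{P}\end{smallmatrix}\right)C^{-1}$, and the final conjugate-compatible orthogonalization. The difference from the paper is one of organization: the paper orthogonalizes first, you orthogonalize last. Specifically, the paper observes that $\Inn_A$ is a Type 1 $k$-involution of $\So(n,k[\sqrt{\alpha}],\beta)$ and invokes Lemma \ref{Type1ClassSo} over that extension field to produce a $\beta$-orthogonal basis $\{x_i+\sqrt{\alpha}y_i\}$ of $E(A,-1)$ at the outset; conjugation then automatically carries this to an orthogonal basis of $E(A,1)$, so the step you flag as ``the main obstacle'' --- replacing the eigenbasis by $W\left(\begin{smallmatrix}N & 0\\0 & \overline{N}\end{smallmatrix}\right)$ while keeping $X$ over $k$ --- never arises. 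With orthogonality in hand, the paper gets the form of $A$ by computing $Ax_i = \frac{1}{2}A\bigl((x_i+\sqrt{\alpha}y_i)+(x_i-\sqrt{\alpha}y_i)\bigr) = -\sqrt{\alpha}y_i$ and $Ay_i = -\frac{\sqrt{\alpha}}{\alpha}x_i$, and it obtains the block structure and diagonality of $X^TMX$ by expanding the relations $\beta(x_i\pm\sqrt{\alpha}y_i, x_j\pm\sqrt{\alpha}y_j)=0$ componentwise rather than through your matrix $C$. What your route buys: it is self-contained (you inline the orthogonalization rather than cite it), the transport through $C$ makes transparent both where the shape $\left(\begin{smallmatrix}X_1 & X_2\\ X_2 & \frac{1}{\alpha}X_1\end{smallmatrix}\right)$ comes from and why $X_1=\frac{1}{4}(P+\overline{P})$ and $X_2=\frac{1}{4\sqrt{\alpha}}(P-\overline{P})$ are automatically over $k$, and you explicitly verify $X\in\Gl(n,k)$ via $X=WC^{-1}$, a point the paper asserts without comment. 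What the paper's route buys: by reusing Lemma \ref{Type1ClassSo} over $k[\sqrt{\alpha}]$, all of the conjugate-pair bookkeeping in your last paragraph simply disappears.
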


\begin{proof}

We begin by constructing bases for $E(A,1)$ and $E(A,-1)$ such that all the basis vectors lie in $k[\sqrt{\alpha}]^n$. From the previous lemma, we know that $\dim(E(A,1)) = \dim(E(A,-1)) =  \frac{n}{2}.$ (Note that this means that $n$ must be even for a Type 2 $k$-involution to occur.) Since $\Inn_A$ is a Type 1 $k$-involution of $\So(n,k[\sqrt{\alpha}], \beta)$, then we can apply Lemma \ref{Type1ClassSo} to find an orthogonal basis $\{ x_1+\sqrt{\alpha}y_1,...,x_{\frac{n}{2}}+\sqrt{\alpha}y_{\frac{n}{2}} \}$ of $E(A,-1)$, where $x_1,...,x_{\frac{n}{2}},y_1,...,y_{\frac{n}{2}} \in k^n$. By the previous lemma, we know that$\{x_1-\sqrt{\alpha}y_1,...,x_{\frac{n}{2}}-\sqrt{\alpha}y_{\frac{n}{2}}\}$ must be a basis for $E(A,1)$. Let $X  = \left(\begin{smallmatrix}x_1 & x_2 & \cdots & x_{\frac{n}{2}} &y_1 & y_2 & \cdots & y_{\frac{n}{2}} \end{smallmatrix}\right)\in \Gl(n,k).$

We now make a couple of observations. Suppose $u = x+\sqrt{\alpha} y$ is a -1-eigenvector of $A$ such that $x,y \in k^n$. Then, we know $v = x-\sqrt{\alpha} y$ is a 1-eigenvector of $A$. Observe that 
\begin{align*}
Ax &= \frac{1}{2}A(u+v)\\ 
&= \frac{1}{2}(-u+v)\\ 
&= -\sqrt{\alpha} y.
\end{align*} 
It follows from this that $$Ay = -\frac{\sqrt{\alpha}}{\alpha}x.$$ 

Since $Ax =  -\sqrt{\alpha} y$ and $Ay = -\frac{\sqrt{\alpha}}{\alpha}x$, then it follows that $$X^{-1}AX = \left(\begin{smallmatrix}0 & -\frac{\sqrt{\alpha}}{\alpha}I_{\frac{n}{2}} \\ -\sqrt{\alpha}I_{\frac{n}{2}} & 0\end{smallmatrix}\right).$$ Rearranging this, we see that $$A = -\frac{\sqrt{\alpha}}{\alpha} X  \left(\begin{array}{cc}0 & I_{\frac{n}{2}} \\ \alpha I_{\frac{n}{2}} & 0\end{array}\right)  X^{-1}.$$

Now, we need only prove the last statement to prove the Lemma. Since $\{ x_1+\sqrt{\alpha}y_1,...,x_{\frac{n}{2}}+\sqrt{\alpha}y_{\frac{n}{2}} \}$  is an orthogonal set of vectors, then we know when $i \ne j$ that 
\begin{align*}
0 &= \beta(x_i+\sqrt{\alpha}y_i, x_j+\sqrt{\alpha}y_j)\\ 
&= (\beta(x_i, x_j)+\alpha\beta(y_i, y_j))+\sqrt{\alpha}(\beta(x_i,y_j)+\beta(x_j,y_i)).
\end{align*}
This tells us that $$\beta(x_i, x_j)= -\alpha\beta(y_i, y_j)$$ and $$\beta(x_i,y_j)= -\beta(x_j,y_i).$$

Since vectors from $E(A,1)$ and $E(A,-1)$ are orthogonal, then we also know that 
\begin{align*}
0 &= \beta(x_i+\sqrt{\alpha}y_i, x_j-\sqrt{\alpha}y_j)\\ 
&= (\beta(x_i, x_j)-\alpha\beta(y_i, y_j))+\sqrt{\alpha}(-\beta(x_i,y_j)+\beta(x_j,y_i)),
\end{align*}
regardless of if $i$ and $j$ are distinct or equal.

This tells us that $$\beta(x_i, x_j)= \alpha\beta(y_i, y_j)$$ and $$\beta(x_i,y_j)= \beta(x_j,y_i).$$

So, when $i \ne j$, then we know that $$ \beta(x_i, y_j) = 0,$$
$$ \beta(x_i, x_j) = 0,$$

and
$$ \beta(y_i, y_j) = 0.$$

When $i = j$, we note that $$\beta(x_i, x_i)= \alpha\beta(y_i, y_i).$$  Then, we have $$X^TMX =  \left(\begin{array}{cc}X_1 & X_2    \\X_2 & \frac{1}{\alpha}X_1   \end{array}\right)$$ where $X_1$ and $X_2$ have been shown to be diagonal.
\end{proof}

We now show an example of a Type 2 $k$-involution, and apply the previous lemma to it.

\begin{beisp}
Assume that $\beta$ is the standard dot product. Then, $\Inn_A$ can be a Type 2 $k$-involution of $\So(4,\mathbb{Q})$ if $A$ is symmetric and orthogonal, since this will imply that $A^2 =I$, and if the entries of $A$ are all $k$-multiples of some $\sqrt{\alpha}$ such that $\sqrt{\alpha} \not \in k$ but $\alpha \in k$. Observe that the matrix $$A = \frac{\sqrt{3}}{3} \left(\begin{array}{cccc}0 & 1 & -1 & 1 \\1 & 0 & 1 & 1 \\-1 & 1 & 1 & 0 \\1 & 1 & 0 & -1\end{array}\right)$$ is both symmetric and orthogonal. Since each entry is the $\mathbb{Q}$-multiple of $\sqrt{3}$, then it is clear that $\Inn_A$ is a Type 2 $k$-involution of $\So(4, \mathbb{Q})$. It can be shown that $E(A,-1)$ has dimension 2. An orthogonal basis for this subspace is formed by the vectors $$v_1 =  \left(\begin{array}{c}\frac{1}{2} \\\frac{1}{2} \\0 \\1\end{array}\right)+ \sqrt{3} \left(\begin{array}{c}-\frac{1}{2} \\-\frac{1}{2} \\0 \\0\end{array}\right)$$ and $$ v_2=\left(\begin{array}{c}\frac{1}{2} \\ -\frac{1}{2} \\1 \\0\end{array}\right)+ \sqrt{3} \left(\begin{array}{c}\frac{1}{2} \\ -\frac{1}{2} \\0 \\0\end{array}\right).$$ 

It can be shown that $$v_3 =  \left(\begin{array}{c}\frac{1}{2} \\\frac{1}{2} \\0 \\1\end{array}\right)- \sqrt{3} \left(\begin{array}{c}-\frac{1}{2} \\-\frac{1}{2} \\0 \\0\end{array}\right)$$ and $$ v_4=\left(\begin{array}{c}\frac{1}{2} \\ -\frac{1}{2} \\1 \\0\end{array}\right)- \sqrt{3} \left(\begin{array}{c}\frac{1}{2} \\ -\frac{1}{2} \\0 \\0\end{array}\right)  $$ are orthogonal $1$-eigenvectors of $A$, where these are the $\sqrt{3}$-conjugates of $v_1$ and $v_2$, respectively.  

Following the notation of the previous lemma, we have $$X = \left(\begin{array}{cccc}\frac{1}{2} & \frac{1}{2} & -\frac{1}{2} & \frac{1}{2} \\\frac{1}{2} & -\frac{1}{2} & -\frac{1}{2} & -\frac{1}{2} \\0 & 1 & 0 & 0 \\1 & 0 & 0 & 0\end{array}\right),$$ where $X^TX = \left(\begin{smallmatrix}\frac{3}{3} & 0 & -\frac{1}{2} & 0 \\0 & \frac{3}{3} & 0 & \frac{1}{2} \\-\frac{1}{2} & 0 & \frac{1}{2} & 0 \\0 & \frac{1}{2} & 0 & \frac{1}{2}\end{smallmatrix}\right)$ and $A = -\frac{\sqrt{3}}{3}X\left(\begin{smallmatrix}0 & I_{\frac{n}{2}} \\ 3I_{\frac{n}{2}} & 0\end{smallmatrix}\right) X^{-1}$ .
\end{beisp}

We now find conditions in the Type 2 case that are equivalent to isomorphy.

\begin{theorem}
\label{type2lemSo}
Suppose $\theta$ and $\phi$ are two Type 2 $k$-involutions of $\So(n,k,\beta)$ where $\theta = \Inn_A$ and $\phi = \Inn_B$. Then,  $$A = -\frac{\sqrt{\alpha}}{\alpha} X  \left(\begin{array}{cc}0 & I_{\frac{n}{2}} \\ \alpha I_{\frac{n}{2}} & 0\end{array}\right)  X^{-1} \in \oo(n,k[\sqrt{\alpha}],\beta)$$ where $$X  = \left(\begin{array}{cccccccc}x_1 & x_2 & \cdots & x_{\frac{n}{2}} & y_1 & y_2 & \cdots  & y_{\frac{n}{2}}  \end{array}\right)\in \Gl(n,k)$$ and the $x_i +\sqrt{\alpha}y_i$ are the orthogonal basis of $E(A,-1)$, and $$X^TMX = \left(\begin{array}{cc}X_1 & X_2    \\X_2 & \frac{1}{\alpha}X_1   \end{array}\right)$$ where $X_1$ and $X_2$ are diagonal matrices,

 and   $$B = -\frac{\sqrt{\gamma}}{\gamma} Y  \left(\begin{array}{cc}0 & I_{\frac{n}{2}} \\ \gamma I_{\frac{n}{2}} & 0\end{array}\right)  Y^{-1}\in \oo(n,k[\sqrt{\gamma}],\beta)$$ where $$Y  = \left(\begin{array}{cccccccc}\tilde{x}_1 & \tilde{x}_2 & \cdots & \tilde{x}_{\frac{n}{2}} & \tilde{y}_1 & \tilde{v}_2 & \cdots  & \tilde{y}_{\frac{n}{2}}  \end{array}\right)\in \Gl(n,k)$$ and the $\tilde{x}_i +\sqrt{\gamma}\tilde{y}_i$ is the orthogonal eigenvectors of $E(B,-1)$, and $$Y^TMY =  \left(\begin{array}{cc}Y_1 & Y_2    \\Y_2 & \frac{1}{\gamma}Y_1   \end{array}\right)$$ where $Y_1$ and $Y_2$ are diagonal matrices,
 and the following are equivalent:
 
 \begin{enumerate}
 \item $\theta$ is isomorphic to $\phi$ over $\oo(n,k,\beta)$.
 
  \item $A$ is conjugate to $B$ or $-B$ over $\oo(n,k,\beta)$.
  
  \item $\alpha = \gamma$ and $Y^TMY = R^TX^TMXR$ where $R = \left(\begin{smallmatrix}R_1 & R_2 \\ \alpha R_2 & R_1\end{smallmatrix}\right) \in \Gl(n,k)$, or $\alpha = \gamma$ and $Y^TMY = R^TX^TMXR$ where $R = \left(\begin{smallmatrix}R_1 & R_2 \\ -\alpha R_2 & -R_1\end{smallmatrix}\right) \in \Gl(n,k)$.
  
  \item We can choose $X$ and $Y$ such that $\alpha = \gamma$, and for $R = \left(\begin{smallmatrix}R_1 & R_2 \\ \alpha R_2 & R_1\end{smallmatrix}\right) \in \Gl(n,k)$ we have 
    $$Y_1 = R_1^TX_1R_1 +\alpha R_2^TX_2R_1+\alpha R_1^TX_2R_2+\alpha R_2^TX_1R_2$$ 
  and
  $$Y_2 = R_2^TX_1R_1+R_1^TX_2R_1+\alpha R_2^TX_2R_2+R_1^TX_1R_2,$$
  
  or for $R = \left(\begin{smallmatrix}R_1 & R_2 \\ -\alpha R_2 & -R_1\end{smallmatrix}\right) \in \Gl(n,k)$ we have
 $$Y_1 = R_1^TX_1R_1 -\alpha R_2^TX_2R_1-\alpha R_1^TX_2R_2+\alpha R_2^TX_1R_2$$ 
  and
  $$Y_2 = R_2^TX_1R_1-R_1^TX_2R_1-\alpha R_2^TX_2R_2+R_1^TX_1R_2.$$
 
 \end{enumerate}
\end{theorem}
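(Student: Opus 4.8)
The plan is to follow the same architecture as the Type 1 argument in Theorem \ref{type1lemSo}. The equivalence of $(i)$ and $(ii)$ is immediate from Lemma \ref{TidyLem}, so the real content is the chain $(ii)\Rightarrow(iii)\Rightarrow(ii)$ together with the purely computational equivalence $(iii)\Leftrightarrow(iv)$. Throughout I would abbreviate $C_\alpha=\left(\begin{smallmatrix}0 & I_{n/2}\\ \alpha I_{n/2} & 0\end{smallmatrix}\right)$, so that $A=-\tfrac{\sqrt\alpha}{\alpha}XC_\alpha X^{-1}$ and $B=-\tfrac{\sqrt\gamma}{\gamma}YC_\gamma Y^{-1}$, and the key algebraic fact I would isolate first is the description of the commutant and anticommutant of $C_\alpha$: a block matrix $P=\left(\begin{smallmatrix}P_{11}&P_{12}\\ P_{21}&P_{22}\end{smallmatrix}\right)$ satisfies $C_\alpha P=PC_\alpha$ iff $P=\left(\begin{smallmatrix}R_1&R_2\\ \alpha R_2 & R_1\end{smallmatrix}\right)$, and $C_\alpha P=-PC_\alpha$ iff $P=\left(\begin{smallmatrix}R_1&R_2\\ -\alpha R_2 & -R_1\end{smallmatrix}\right)$; these are exactly the two shapes of $R$ appearing in $(iii)$.

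For $(ii)\Rightarrow(iii)$, suppose $Q\in\oo(n,k,\beta)$ with $Q^{-1}AQ=\pm B$. Since $Q$ has entries in $k$ and every entry of $A$ is a $k$-multiple of $\sqrt\alpha$, every entry of $Q^{-1}AQ=\pm B$ is again a $k$-multiple of $\sqrt\alpha$; comparing with the fact that the entries of $B$ are $k$-multiples of $\sqrt\gamma$ forces $\alpha$ and $\gamma$ to lie in the same class of $k^*/(k^*)^2$. I would then note that if $\gamma=c^2\alpha$ one may replace $Y$ by $Y\,\diag(I_{n/2},cI_{n/2})$, which rewrites $B$ in the form $-\tfrac{\sqrt\alpha}{\alpha}Y'C_\alpha (Y')^{-1}$ while preserving the block shape $\left(\begin{smallmatrix}Y_1 & Y_2\\ Y_2 & \frac1\alpha Y_1\end{smallmatrix}\right)$ of $Y^TMY$; this is the precise sense in which one ``can choose $X$ and $Y$ so that $\alpha=\gamma$.'' With $\alpha=\gamma$ fixed, setting $P=X^{-1}QY$ the relation $Q^{-1}AQ=\pm B$ becomes $P^{-1}C_\alpha P=\pm C_\alpha$, so $P$ is exactly one of the two block shapes above; writing $R=P$ and using $Q^TMQ=M$ gives $Y^TMY=(QY)^TM(QY)=R^T(X^TMX)R$, which is $(iii)$.

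The converse $(iii)\Rightarrow(ii)$ is constructive: given $R$ of the appropriate shape with $Y^TMY=R^T(X^TMX)R$, put $Q=XRY^{-1}\in\Gl(n,k)$. Then $Q^TMQ=(Y^{-1})^T(R^TX^TMXR)Y^{-1}=(Y^{-1})^T(Y^TMY)Y^{-1}=M$, so $Q\in\oo(n,k,\beta)$, and since $R$ commutes (resp. anticommutes) with $C_\alpha$ one computes $Q^{-1}AQ=-\tfrac{\sqrt\alpha}{\alpha}Y(R^{-1}C_\alpha R)Y^{-1}=B$ (resp. $-B$). Unlike the Type 1 situation there is no determinant correction to make here, because we are only asserting isomorphy over $\oo(n,k,\beta)$. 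Finally $(iii)\Leftrightarrow(iv)$ is obtained by multiplying out $R^T(X^TMX)R$ in $\tfrac n2\times\tfrac n2$ blocks, using that $X^TMX=\left(\begin{smallmatrix}X_1 & X_2\\ X_2 & \frac1\alpha X_1\end{smallmatrix}\right)$: the $(1,1)$ and $(1,2)$ blocks of the product are precisely the stated expressions for $Y_1$ and $Y_2$, and one checks that the $(2,1)$ and $(2,2)$ blocks reproduce $Y_2$ and $\tfrac1\alpha Y_1$ automatically, so that no information beyond the two displayed identities is lost.

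I expect the main obstacle to be the bookkeeping around the scalar $\alpha$: pinning down that isomorphy forces $\alpha$ and $\gamma$ into the same square class, and then verifying that the rescaling of $Y$ simultaneously normalizes $\gamma$ to $\alpha$ and preserves the special block form of $Y^TMY$, is the one genuinely non-mechanical part. Once $\alpha=\gamma$ is in force, everything else reduces to the commutant computation for $C_\alpha$ and to routine block arithmetic.
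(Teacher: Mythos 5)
Your proposal is correct and follows essentially the same route as the paper's proof: $(i)\Leftrightarrow(ii)$ via Lemma \ref{TidyLem}, $(ii)\Leftrightarrow(iii)$ via the substitution $R = X^{-1}QY$ (resp.\ $Q = XRY^{-1}$) combined with the description of matrices commuting or anticommuting with $\left(\begin{smallmatrix}0 & I_{\frac{n}{2}} \\ \alpha I_{\frac{n}{2}} & 0\end{smallmatrix}\right)$, and $(iii)\Leftrightarrow(iv)$ by block multiplication. If anything, you fill in two details the paper leaves implicit: the square-class argument forcing $\alpha$ and $\gamma$ to agree, with the explicit rescaling $Y \mapsto Y\diag(I_{\frac{n}{2}}, cI_{\frac{n}{2}})$ that preserves the block form of $Y^TMY$, and the verification that the $(2,1)$ and $(2,2)$ blocks in the computation for $(iv)$ carry no information beyond the displayed identities for $Y_1$ and $Y_2$.
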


\begin{proof}

The equivalence of $(i)$ and $(ii)$ follows from Lemma \ref{TidyLem}. So, we begin by showing that $(ii)$ implies $(iii)$. First suppose there exists $Q \in \oo(n,k,\beta)$ such that $Q^{-1}AQ = B$. So, we have $$Q^{-1} \frac{\sqrt{\alpha}}{\alpha} X  \left(\begin{array}{cc}0 & I_{\frac{n}{2}} \\ \alpha I_{\frac{n}{2}} & 0\end{array}\right)  X^{-1} Q = \frac{\sqrt{\gamma}}{\gamma} Y  \left(\begin{array}{cc}0 & I_{\frac{n}{2}} \\ \gamma I_{\frac{n}{2}} & 0\end{array}\right)  Y^{-1}.$$ Also, we know that since $A \in \So(n,k[\sqrt{\alpha}],\beta)$ and $B\in \So(n,k[\sqrt{\gamma}],\beta)$ are congruent over $\oo(n,k,\beta)$, then $\gamma$ must be a $k$-multiple of $\alpha$. Without loss of generality, we will assume $\gamma = \alpha$. Thus, $$Q^{-1}  X  \left(\begin{array}{cc}0 & I_{\frac{n}{2}} \\ \alpha I_{\frac{n}{2}} & 0\end{array}\right)  X^{-1} Q = Y  \left(\begin{array}{cc}0 & I_{\frac{n}{2}} \\ \alpha I_{\frac{n}{2}} & 0\end{array}\right)  Y^{-1}.$$ 

Rearranging, we see that $$\left(\begin{array}{cc}0 & I_{\frac{n}{2}} \\ \alpha I_{\frac{n}{2}} & 0\end{array}\right)X^{-1}QY  = X^{-1}QY \left(\begin{array}{cc}0 & I_{\frac{n}{2}} \\ \alpha I_{\frac{n}{2}} & 0\end{array}\right).$$ Let $R = X^{-1}QY $, and note that $R \in \Gl(n,k)$. Since $\left(\begin{smallmatrix}0 & I_{\frac{n}{2}} \\ \alpha I_{\frac{n}{2}} & 0\end{smallmatrix}\right) R = R \left(\begin{smallmatrix}0 & I_{\frac{n}{2}} \\ \alpha I_{\frac{n}{2}} & 0\end{smallmatrix}\right)$, then $R = \left(\begin{smallmatrix}R_1 & R_2 \\ \alpha R_2 & R_1\end{smallmatrix}\right)$. Observe that $XR = QY$. Also, observe that since $Q \in \So(n,k,\beta)$, then we know that $Q^TMQ=M$. It follows from these observations that 
\begin{align*} 
R^T(X^TMX)R &= (XR)^TM(XR)\\ 
&= (QY)^TM(QY)\\
&= Y^T(Q^TMQ)Y\\ 
&= Y^TMY.
\end{align*} 

If instead we assume that there exists $Q \in \oo(n,k,\beta)$ such that $Q^{-1}AQ = -B$, then we can similarly show that  $\alpha = \gamma$ and $Y^TMY = R^TX^TMXR$ where $R = \left(\begin{smallmatrix}R_1 & R_2 \\ -\alpha R_2 & -R_1\end{smallmatrix}\right) \in \Gl(n,k)$ for $R_1, R_2 \in \M(\frac{n}{2},k).$ This proves that $(ii)$ implies $(iii)$. 

We now show that $(iii)$ implies $(ii)$. First assume  $\alpha = \gamma$ and $X^TMX$ is congruent to $Y^TMY$ over $\Gl(n,k)$ where $Y^TMY = R^TX^TMXR$ for $R = \left(\begin{smallmatrix}R_1 & R_2 \\ \alpha R_2 & R_1\end{smallmatrix}\right)$, where $R_1, R_2 \in \Gl(\frac{n}{2},k)$. Let $Q = XRY^{-1}.$ Then, we observe that 
\begin{align*}
Q^{-1}AQ &= (XRY^{-1})^{-1}A(XRY^{-1})\\
&= YR^{-1}(X^{-1}AX)RY^{-1}\\
&= -\frac{-\sqrt{\alpha}}{\alpha} YR^{-1} \left(\begin{array}{cc}0 & I_{\frac{n}{2}} \\ \alpha I_{\frac{n}{2}} & 0\end{array}\right) RY^{-1}\\ 
&=  -\frac{-\sqrt{\alpha}}{\alpha} Y \left(\begin{array}{cc}0 & I_{\frac{n}{2}} \\ \alpha I_{\frac{n}{2}} & 0\end{array}\right) R^{-1}RY^{-1}\\
&= -\frac{-\sqrt{\alpha}}{\alpha} Y\left(\begin{array}{cc}0 & I_{\frac{n}{2}} \\ \alpha I_{\frac{n}{2}} & 0\end{array}\right) Y^{-1} = B.
\end{align*}

To show that $(ii)$ is indeed the case, we need only show that $Q \in \oo(n,k,\beta)$. By construction, we know that $Q \in \Gl(n,k)$.  So, it is suffice to show $Q^TMQ = M$. But, 
\begin{align*}
Q^TMQ &= (XRY^{-1})^TM(XRY^{-1})\\
&= (Y^{-1})^T(R^TX^TMXR)Y^{-1}\\ 
&= (Y^{-1})^T(Y^TMY)Y^{-1} = M.
\end{align*}
If we instead assume that $\alpha = \gamma$ and $X^TMX$ is congruent to $Y^TMY$ over $\Gl(n,k)$ where $Y^TMY = R^TX^TMXR$ for $R = \left(\begin{smallmatrix}R_1 & R_2 \\ -\alpha R_2 & -R_1\end{smallmatrix}\right)$, where $R_1, R_2 \in \Gl(\frac{n}{2},k)$, then if we let $Q = XRY^{-1},$ we can similarly show that $Q^{-1}AQ = -B$ and $Q \in \oo(n,k,\beta)$. This shows that $(iii)$ implies $(ii)$.

Lastly, matrix multiplication shows that $(iii)$ and $(iv)$ are equivalent. 

 \end{proof}
 
 The reader will notice that in the Type 1 case, our conditions gave us isomorphy of $k$-involutions over $\So(n,k,\beta)$, but the Type 2 case gave us isomorphy of $k$-involutions over $\oo(n,k,\beta)$. In the following example, we give an example that shows that isomorphy over $\oo(n,k,\beta)$ is not the same as isomorphy over $\So(n,k,\beta)$ for Type 2 $k$-involutions.
 
 \begin{beisp}
 
 Consider the group $\So(4, \mathbb{F}_3)$. That is, consider the case where $k$ is the group of three elements, and the bilinear form is the standard dot product. Let $i$ denote a fixed square root of $2 =-1$. A Type 2 $k$-involution is induced by the matrix 
 $$A = i \left(\begin{array}{cccc}1 & 1 & 0 & 0 \\1 & 2 & 0 & 0 \\0 & 0 & 1 & 1 \\0 & 0 & 1 & 2\end{array}\right) \in \So(4, \mathbb{F}_3[i]).$$ 
 By analyzing suitable eigenvectors for this matrix, we see that 
 $$X = \left(\begin{array}{cccc}1 & 0 & 2 & 0 \\0 & 0 & 2 & 0 \\0 & 1 & 0 & 2 \\0 & 0 & 0 & 2\end{array}\right)$$ 
 where 
 $$A = i X^{-1}  \left(\begin{array}{cccc}0 & 0 & 1 & 0 \\0 & 0 & 0 & 1 \\2 & 0 & 0 & 0 \\0 & 2 & 0 & 0\end{array}\right)  X.$$ 
We also see that
 $$X^TX = \left(\begin{array}{cc}X_1 & X_2    \\X_2 & 2X_1   \end{array}\right)$$ where $X_1 = \left(\begin{smallmatrix}1 & 0 \\0 & 1\end{smallmatrix}\right)$ and $X_2 = \left(\begin{smallmatrix}2 & 0 \\0 & 2\end{smallmatrix}\right).$

Now, we also consider the Type 2 $k$-involution of $\So(4, \mathbb{F}_3)$ that is induced by the matrix 
$$B = i \left(\begin{array}{cccc}0 & 0 & 2 & 1 \\0 & 0 & 2 & 2 \\2 & 2 & 0 & 0 \\1 & 2 & 0 & 0\end{array}\right) \in \So(4, \mathbb{F}_3[i]).$$ 
 By analyzing suitable eigenvectors for this matrix, we see that for 
 $$Y = \left(\begin{array}{cccc}1 & 0 & 0 & 0 \\0 & 1 & 0 & 0 \\0 & 0 & 1 & 1 \\0 & 0 & 2 & 1\end{array}\right),$$ 
 we have 
 $$B = i Y^{-1}  \left(\begin{array}{cccc}0 & 0 & 1 & 0 \\0 & 0 & 0 & 1 \\2 & 0 & 0 & 0 \\0 & 2 & 0 & 0\end{array}\right)  Y.$$ 
We also see that 
 $$Y^TY = \left(\begin{array}{cc}Y_1 & Y_2    \\Y_2 & 2Y_1   \end{array}\right)$$ where $Y_1 = \left(\begin{smallmatrix}1 & 0 \\0 & 1\end{smallmatrix}\right)$ and $Y_2 = \left(\begin{smallmatrix}0 & 0 \\0 & 0\end{smallmatrix}\right).$

 We have two ways of showing that these Type 2 $k$-involutions are congruent over $\oo(4,\mathbb{F}_3)$. First, we consider the matrix 
 $$Q = \left(\begin{array}{cccc}1 & 2 & 2 & 1 \\1 & 1 & 1 & 1 \\1 & 1 & 2 & 2 \\2 & 1 & 2 & 1\end{array}\right) \in \oo(4, \mathbb{F}_3) \setminus \So(4, \mathbb{F}_3).$$ 
Then, $B = Q^{-1}AQ$. This is condition $(ii)$ of the previous theorem.
 
 Secondly, if we let $R = \left(\begin{smallmatrix}R_1 & R_2 \\ 2R_2 & R_1\end{smallmatrix}\right) \in \Gl(4,\mathbb{F}_3)$ where $R_1 = \left(\begin{smallmatrix}1 & 0 \\0 & 1\end{smallmatrix}\right)$ and $R_2 = \left(\begin{smallmatrix}1 & 1 \\2 & 1\end{smallmatrix}\right),$ then we get that $R^TY^TYR= X^TX.$ This is condition $(iii)$ of the previous theorem. 
 
%
 We now show that there does not exist $W \in \So(4, \mathbb{F}_3)$ such that $B = W^{-1}AW$. We proceed by contradiction and suppose that these does exist such an $W$. It then follows that $A$ and $QW^{-1} \in \oo(4, \mathbb{F}_3)$ are commuting matrices. It is a simple matter to show that matrices that commute with $A$ must be of the form 
 $$\left(\begin{array}{cccc}a & b & c & d \\b & a+b & d & c+d \\e & f & g & h \\f & e+f & h & g+h\end{array}\right).$$
 One such matrix is 
 $$\left(\begin{array}{cccc}1 & 1 & 2 & 2 \\1 & 2 & 2 & 1 \\2 & 2 & 2 & 2 \\2 & 1 & 2 & 1\end{array}\right) \in \So(4, \mathbb{F}_3).$$
 But, all other such orthogonal matrices differ from this matrix only in that an even number of rows and/or columns have been multiplied by $2=-1$ or an even number of rows and/or columns have been swapped. All of these actions create matrices that will also have determinant 1. Thus, all the matrices in $\oo(4, \mathbb{F}_3)$ which commute with $A$ are also members of $\So(4, \mathbb{F}_3),$ which contradicts $QW^{-1} \in \oo(4, \mathbb{F}_3)$ commuting with $A$. So, no such $W \in \So(4,\mathbb{F}_3)$ can exist, which means we cannot strengthen the above theorem by replacing $\oo(n,k,\beta)$ with $\So(n,k,\beta)$ in conditions $(i)$ and $(ii)$.
 \end{beisp}
 
 \begin{theorem}
 Suppose $\Inn_A$ and $\Inn_B$ are both Type 2 $k$-involutions of $\So(n,k,\beta)$. Then, $\Inn_A$ and $\Inn_B$ are isomorphic over $\oo(n,k,\beta)$ if and only if they are isomorphic over \newline $\So(n,k[\sqrt{\alpha}], \beta)$.
 \end{theorem}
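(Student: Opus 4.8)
The plan is to reduce everything to the Type 1 machinery applied over the quadratic extension $K = k[\sqrt{\alpha}]$, and then to match the two congruence conditions by an explicit matrix computation. First I observe that over $K$ the matrices $A$ and $B$ are genuine Type 1 $K$-involutions of $\So(n,K,\beta)$: each lies in $\oo(n,K,\beta)$ and squares to $I$. (If the extension attached to $B$ were a different quadratic extension $k[\sqrt{\gamma}]$, then $B$ would not even lie in $\oo(n,K,\beta)$, so neither isomorphy could hold; thus I may assume $\gamma=\alpha$ throughout.) Consequently Theorem \ref{type1lemSo}, applied with $K$ in place of $k$, tells me that isomorphy of $\Inn_A$ and $\Inn_B$ over $\So(n,K,\beta)$ is equivalent to its condition $(iii)$ over $K$: the block forms on the $\pm 1$-eigenspaces are congruent over $\Gl(\tfrac n2, K)$, in the straight or the swapped pattern.

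Next I would compute those eigenspace forms in terms of the Type 2 data. Using the eigenvector matrix $X$ of Lemma \ref{Type2ClassSo} and writing $X_x=(x_1\cdots x_{n/2})$, $X_y=(y_1\cdots y_{n/2})$, the $-1$-eigenspace basis is $U=X_x+\sqrt{\alpha}X_y$ and the $+1$-eigenspace basis is $V=X_x-\sqrt{\alpha}X_y$. A short expansion using the block shape of $X^TMX$ gives $U^TMU=2(X_1+\sqrt{\alpha}X_2)$ and $V^TMV=2(X_1-\sqrt{\alpha}X_2)$, so the two eigenspace forms are Galois conjugates of one another; write $P_A=2(X_1+\sqrt{\alpha}X_2)$ and likewise $P_B=2(Y_1+\sqrt{\alpha}Y_2)$. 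Because the second form is the $\sqrt{\alpha}$-conjugate of the first, a congruence $S^TP_AS=P_B$ over $K$ automatically yields the conjugate congruence on the second blocks; hence the full condition $(iii)$ of Theorem \ref{type1lemSo} over $K$ collapses to the single statement that $P_A$ is congruent over $\Gl(\tfrac n2, K)$ to $P_B$ or to its $\sqrt{\alpha}$-conjugate.

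Then I match this with condition $(iii)/(iv)$ of Theorem \ref{type2lemSo}, which governs isomorphy over $\oo(n,k,\beta)$. The key device is the ring isomorphism sending $S=R_1+\sqrt{\alpha}R_2\in\M(\tfrac n2,K)$ to $R=\left(\begin{smallmatrix} R_1 & R_2 \\ \alpha R_2 & R_1\end{smallmatrix}\right)$, the matrices commuting with $\left(\begin{smallmatrix}0 & I \\ \alpha I & 0\end{smallmatrix}\right)$, under which I claim $Y^TMY=R^TX^TMXR$ holds if and only if $P_B=S^TP_AS$. This is exactly the content of the explicit formulas for $Y_1,Y_2$ in condition $(iv)$: multiplying out $(R_1+\sqrt{\alpha}R_2)^T(X_1+\sqrt{\alpha}X_2)(R_1+\sqrt{\alpha}R_2)$ reproduces $Y_1+\sqrt{\alpha}Y_2$ exactly. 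Moreover $R\in\Gl(n,k)$ if and only if $S\in\Gl(\tfrac n2, K)$, since $\det R = N_{K/k}(\det S)$. Thus the $+$ case of condition $(iii)$ is equivalent to $P_A$ being congruent to $P_B$ over $K$, and an identical computation shows the $-$ case (with $R=\left(\begin{smallmatrix} R_1 & R_2 \\ -\alpha R_2 & -R_1\end{smallmatrix}\right)$) is equivalent to congruence of $P_A$ to the $\sqrt{\alpha}$-conjugate of $P_B$. Chaining these equivalences with the previous paragraph gives the theorem.

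The main obstacle I anticipate is this third step: verifying that the $R\leftrightarrow S$ correspondence intertwines the $k$-congruence $R^TX^TMXR$ with the $K$-congruence $S^TP_AS$, and in particular pinning down that the two sign-cases of $R$ correspond precisely to congruence to $P_B$ versus its conjugate. The forward direction (isomorphy over $\oo(n,k,\beta)$ implies isomorphy over $\So(n,K,\beta)$) also admits a quick alternative argument, since any conjugator $Q\in\oo(n,k,\beta)$ already lies in $\oo(n,K,\beta)$ and Theorem \ref{type1lemSo} makes $\oo(n,K,\beta)$- and $\So(n,K,\beta)$-isomorphy coincide over $K$; I would use this as a consistency check on the harder backward direction, which is the one genuinely carried by the congruence matching.
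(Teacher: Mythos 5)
Your proposal is correct and takes essentially the same route as the paper's own proof: both view the Type 2 involutions as Type 1 involutions over $k[\sqrt{\alpha}]$, invoke Theorem \ref{type1lemSo} there to turn $\So(n,k[\sqrt{\alpha}],\beta)$-isomorphy into congruence of the $\pm 1$-eigenspace forms, and then recover condition $(iv)$ of Theorem \ref{type2lemSo} by expanding $(R_1+\sqrt{\alpha}R_2)^T(\,\cdot\,)(R_1+\sqrt{\alpha}R_2)$ and separating rational and $\sqrt{\alpha}$-parts. Your explicit treatment of the Galois-conjugate collapse of condition $(iii)$, the norm identity $\det R = N_{k[\sqrt{\alpha}]/k}(\det S)$, and the converse direction only makes precise what the paper leaves implicit or calls clear.
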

 
 \begin{proof}
 
When viewed as Type 2 $k$-involutions of $\So(n,k,\beta)$, we can write  $$A = -\frac{\sqrt{\alpha}}{\alpha} U  \left(\begin{array}{cc}0 & I_{\frac{n}{2}} \\ \alpha I_{\frac{n}{2}} & 0\end{array}\right)  U^{-1} \text{ where } U^TMU = \left(\begin{array}{cc}U_1 & U_2    \\U_2 & \frac{1}{\alpha}U_1   \end{array}\right)$$ and $$B = -\frac{\sqrt{\alpha}}{\alpha} V  \left(\begin{array}{cc}0 & I_{\frac{n}{2}} \\ \alpha I_{\frac{n}{2}} & 0\end{array}\right)  V^{-1} \text{ where } V^TMV =  \left(\begin{array}{cc}V_1 & V_2    \\V_2 & \frac{1}{\alpha}V_1   \end{array}\right)$$ and $U_1$, $U_2$, $V_1$ and $V_2$ are diagonal matrices.

 When $\Inn_A$ and $\Inn_B$ are viewed as $k$-involutions of $\So(n,k[\sqrt{\alpha}], \beta)$, then they are Type 1 $k$-involutions. Further, we can choose $X$ and $Y\in \Gl(n,k[\sqrt{\alpha}])$ such that $A = X \left(\begin{smallmatrix}-I_{\frac{n}{2}} & 0 \\0 & I_{\frac{n}{2}}\end{smallmatrix}\right)X^{-1}$,  $B = Y \left(\begin{smallmatrix}-I_{\frac{n}{2}} & 0 \\0 & I_{\frac{n}{2}}\end{smallmatrix}\right)Y^{-1}$, and 
 
$$X_1 = \frac{1}{2}(U_1 +\sqrt{\alpha}U_2),$$
$$X_2 =  \frac{1}{2}(U_1 -\sqrt{\alpha}U_2),$$
$$Y_1 = \frac{1}{2}(V_1 +\sqrt{\alpha}V_2),$$
 and $$Y_2 = \frac{1}{2}(V_1 -\sqrt{\alpha}V_2).$$
 
 This follows from the way in which $U$ and $V$ are constructed from the eigenvalues of $A$ and $B$. We need to simply have $X$ and $Y$ consist of the appropriate eigenvectors, and mandate that the last $\frac{n}{2}$ columns of $X$ and $Y$ are the $\sqrt{\alpha}$-conjugates of the first $\frac{n}{2}$ columns. (The only exception to this is that we may need to negate the first column of $X$, so that we can preserve isomorphy of $\Inn_A$ and $\Inn_B$ over $\So(n,k[\sqrt{\alpha}],\beta)$, if we are assuming that. But, this will not change the value of $X_1$.)

Now, suppose $\Inn_A$ and $\Inn_B$ are isomorphic over $\So(n,k[\sqrt{\alpha}],\beta)$ as Type 1 $k$-involutions. Then, from Theorem \ref{type1lemSo} we know that $Y_1$ is congruent to either $X_1$ or $X_2$. 

  In the first case, we see that 
\begin{align*}
\frac{1}{2}(V_1 +\sqrt{\alpha}V_2) &= Y_1\\
&=  (R_1+\sqrt{\alpha}R_2)^T X_1(R_1+\sqrt{\alpha}R_2)\\
&= (R_1+\sqrt{\alpha}R_2)^T \frac{1}{2}(U_1 +\sqrt{\alpha}U_2)(R_1+\sqrt{\alpha}R_2),
\end{align*}
  where $R_1$ and $R_2$ are over $k$.

In the second case, we see that 
\begin{align*}
\frac{1}{2}(V_1 +\sqrt{\alpha}V_2) &= Y_1\\
&=  (R_1+\sqrt{\alpha}R_2)^T X_2(R_1+\sqrt{\alpha}R_2)\\
&= (R_1+\sqrt{\alpha}R_2)^T \frac{1}{2}(U_1 -\sqrt{\alpha}U_2)(R_1+\sqrt{\alpha}R_2),
\end{align*}
where $R_1$ and $R_2$ are over $k$.

It follows from this that 

$$V_1 = R_1^TU_1R_1 +\alpha R_2^TU_2R_1+\alpha R_1^TU_2R_2+\alpha R_2^TU_1R_2$$ 
  and
  $$V_2 = R_2^TU_1R_1+R_1^TU_2R_1+\alpha R_2^TU_2R_2+R_1^TU_1R_2,$$
  or
 $$V_1 = R_1^TU_1R_1 -\alpha R_2^TU_2R_1-\alpha R_1^TU_2R_2+\alpha R_2^TU_1R_2,$$ 
  and
  $$V_2 = R_2^TU_1R_1-R_1^TU_2R_1-\alpha R_2^TU_2R_2+R_1^TU_1R_2.$$

The previous theorem tells us that this means that $\Inn_A$ and $\Inn_B$ are isomorphic over $\oo(n,k,\beta)$. Since the converse is clear, then we have shown what was needed.
 
 \end{proof}

\subsection{Type 3 $k$-involutions}

We now examine the Type 3 case. Recall that $\phi$ is a Type 3 $k$-involution if $\phi = \Inn_A$, where $A \in \oo(n,k,\beta)$ and $A^2 = -I$. Such matrices have eigenvalues $\pm i$, where $i$ is a fixed square root of $-1$, and are diagonalizable because the minimal polynomial has no repeated roots. We refer to the Galois automorphism that send $a+bi$ to $a-bi$ for $a, b \in k$ as complex conjugation. 

We begin by proving a couple or results about the eigenvectors of such matrices.

\begin{lem}

Suppose $A \in \oo(n,k,\beta) $ induces a Type 3 $k$-involution of $\So(n,k,\beta)$. Also suppose $x,y \in k^n$ such that $x+iy \in E(A,-i)$. Then, $x-iy \in E(A,i)$. Likewise, if $u,v \in k^n$ such that $u+iv \in E(A,i)$, then $u-iv \in E(A,-i)$. Further, $\dim(E(A,i))= \dim(E(A,-i))$.

\end{lem}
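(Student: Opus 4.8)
The plan is to reproduce the argument used for the Type~2 lemma, with ordinary complex conjugation playing the role of ``$\sqrt{\alpha}$-conjugation''. The decisive structural difference is that here $A \in \oo(n,k,\beta)$ has all of its entries in $k$, so complex conjugation \emph{fixes} $A$ (rather than sending it to $-A$, as happened in the Type~2 case). I would first set up complex conjugation as the Galois automorphism of $k[i]$ over $k$ sending $a+bi \mapsto a-bi$, extended entrywise to vectors and matrices over $k[i]$; being a ring automorphism it commutes with matrix multiplication, and since every entry of $A$ lies in $k$ we have $\overline{A}=A$.

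Starting from $A(x+iy) = -i(x+iy)$ with $x,y \in k^n$, I would apply complex conjugation to both sides. The left side becomes $\overline{A}\,\overline{(x+iy)} = A(x-iy)$, while the right side $\overline{-i(x+iy)}$ simplifies to $i(x-iy)$. Hence $A(x-iy) = i(x-iy)$, that is, $x-iy \in E(A,i)$, which is the first claim. The second claim follows from the identical computation begun from $A(u+iv)=i(u+iv)$. For the dimension statement I would note that complex conjugation is an invertible operator on $k[i]^n$ that carries $E(A,-i)$ into $E(A,i)$ and $E(A,i)$ into $E(A,-i)$; the two resulting inequalities $\dim E(A,-i) \le \dim E(A,i)$ and $\dim E(A,i) \le \dim E(A,-i)$ force equality.

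The only point requiring any care---and it is quite mild---is the observation $\overline{A}=A$, which is precisely where the hypothesis $A \in \oo(n,k,\beta)$ (as opposed to $A$ having entries that are $k$-multiples of $i$, as in Type~2) is used. It is exactly this fixing of $A$ under conjugation, contrasted with the negation seen in the Type~2 proof, that makes the $+i$ and $-i$ eigenspaces swap cleanly and thereby share a common dimension.
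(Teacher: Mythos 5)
Your proposal is correct and follows essentially the same route as the paper: both proofs apply complex conjugation to the eigenvalue equation, use that $A$ is fixed by conjugation because its entries lie in $k$, and deduce the dimension equality from the resulting swap of the two eigenspaces. The only cosmetic difference is that the paper first separates the equation into $Ax+iAy = y-ix$ before conjugating, whereas you conjugate the equation wholesale as a ring automorphism.
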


\begin{proof}
Suppose $x,y \in k^n$ such that $x+iy \in E(A,-i)$. Then, 
$$A(x+iy) = -i(x+iy)$$ implies
$$Ax+iAy = y-ix.$$ If we take the complex conjugate, then we see that 
$$Ax-iAy = y+ix.$$ This implies
$$A(x-iy) = i(x-iy),$$ which shows that $x-iy \in E(A,i)$. A similar proof will show that if $u,v \in k^n$ such that $u+iv \in E(A,i)$, then $u-iv \in E(A,-i)$.

Since  $x+iy \in E(A,-i)$ implies $x-iy \in E(A,i)$ and vice versa, then we see that $\dim(E(A,i))$ $= \dim(E(A,-i))$. 
\end{proof}

\begin{lem}
\label{Type3EigenSo}
Suppose $\theta = \Inn_A$ is a Type 3 $k$-involution of $\So(n,k,\beta)$ where $A \in \oo(n,k,\beta)$. Then, we can find $x_1,...,x_{\frac{n}{2}}, y_1,...,y_{\frac{n}{2}} \in k^n$ such that the $x_j+iy_j$ are a basis for $E(A,-i)$ and the $x_j-iy_j$ are a basis for $E(A,i)$.
\end{lem}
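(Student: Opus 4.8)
The plan is to build the required basis of $E(A,-i)$ by simply reading off the ``real'' and ``imaginary'' parts of an arbitrary basis, after which the companion basis of $E(A,i)$ is produced for free by the preceding lemma. Essentially this lemma is a bookkeeping consequence of the one before it, so the proof should be short.

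First I would record the structural facts about $A$. Since $A\in\oo(n,k,\beta)$ has entries in $k$ and satisfies $A^2=-I$, its minimal polynomial divides $t^2+1=(t-i)(t+i)$, which has distinct roots; hence $A$ is diagonalizable over $k[i]$ and $k[i]^n=E(A,i)\oplus E(A,-i)$. By the previous lemma $\dim E(A,i)=\dim E(A,-i)$, and as these dimensions sum to $n$ we obtain $\dim E(A,-i)=\frac{n}{2}$ (in particular $n$ is even, consistent with the earlier observation that Type 3 involutions occur only for even $n$).

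Next I would choose any $k[i]$-basis $\{w_1,\dots,w_{\frac{n}{2}}\}$ of $E(A,-i)$. Because $k[i]=k\oplus ki$ as a $k$-vector space, each $w_j$ has a unique decomposition $w_j=x_j+iy_j$ with $x_j,y_j\in k^n$. This already furnishes the vectors $x_j,y_j\in k^n$ demanded by the statement, with $\{x_j+iy_j\}$ a basis of $E(A,-i)$ by construction. Applying complex conjugation $\sigma$ (the $k$-automorphism of $k[i]$ sending $a+bi\mapsto a-bi$, extended entrywise to $k[i]^n$), the previous lemma gives $x_j-iy_j=\sigma(w_j)\in E(A,i)$ for every $j$.

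It therefore remains only to verify that $\{\sigma(w_1),\dots,\sigma(w_{\frac{n}{2}})\}$ is a basis of $E(A,i)$, and this is the one step carrying genuine content. Here I would use that $\sigma$ is an involutive, bijective, conjugate-linear map which interchanges $E(A,-i)$ and $E(A,i)$ (again by the previous lemma). Linear independence is preserved because from $\sum_j c_j\sigma(w_j)=0$ with $c_j\in k[i]$ one applies $\sigma$ to get $\sum_j \sigma(c_j)w_j=0$, forcing $\sigma(c_j)=0$ and hence $c_j=0$ for all $j$; since $\dim E(A,i)=\frac{n}{2}$, these $\frac{n}{2}$ independent vectors span. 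The only subtlety to keep in mind is that this argument presupposes $i\notin k$, so that the real/imaginary decomposition $w_j=x_j+iy_j$ is well defined and $\sigma$ is a nontrivial field automorphism; this is exactly the regime in which the Type 3 ``complex conjugation'' language of the preceding lemma was set up.
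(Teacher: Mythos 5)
Your proof is correct and follows essentially the paper's strategy: produce a basis of $E(A,-i)$ whose entries lie in $k[i]$, split each vector into its $k$-parts, and transport the basis to $E(A,i)$ via the preceding conjugation lemma. Two differences are worth recording. First, where you obtain the $k[i]$-basis abstractly from diagonalizability of $A$ over $k[i]$, the paper constructs one explicitly: starting from a $k$-basis $\{z_j\}$ of $k^n$ it forms $u_j = z_j + iAz_j$ (twice the projection $\frac{1}{2}(I+iA)$ onto $E(A,-i)$), so membership in $k[i]^n$ and the decomposition $x_j = z_j$, $y_j = Az_j$ come for free. Second, you supply the one step the paper merely asserts, namely that the conjugated vectors $x_j - iy_j$ are linearly independent; your argument (apply $\sigma$ to a dependence relation and use independence of the $w_j$) is exactly the detail the paper's phrase ``since these vectors will be linearly independent'' omits.

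Concerning your closing caveat that the argument presupposes $i \notin k$: strictly speaking the lemma carries no such hypothesis, and the paper later invokes it in the case $i \in k$ (Lemma \ref{Type3ClassYesSo}), so a fully general proof should address that case. This is not a defect of your write-up relative to the paper, however, since the paper's own proof has the identical restriction: the conjugation lemma it cites is only meaningful (indeed only true) when $i \notin k$ --- if $i \in k$ one may take $y = 0$ and a nonzero $x \in E(A,-i)\cap k^n$, and then $x - iy = x \notin E(A,i)$. Moreover the excluded case is trivial to patch: when $i \in k$, choose $k$-bases $\{a_j\}$ of $E(A,i)\cap k^n$ and $\{b_j\}$ of $E(A,-i)\cap k^n$ and set $x_j = \frac{1}{2}(a_j+b_j)$, $y_j = \frac{i}{2}(a_j-b_j)$; then $x_j + iy_j = b_j$ and $x_j - iy_j = a_j$, as required.
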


\begin{proof}

Since $\Inn_A$ is Type 3, then we are assuming that $A \in \oo(n,k, \beta)$ and $A^2 = -I$. Note that this also means that $n$ is even. It follows that all eigenvalues of $A$ are $\pm i$. Since there are no repeated roots in the minimal polynomial of $A$, then we see that $A$ is diagonalizable. We begin by constructing bases for $E(A,i)$ and $E(A,-i)$ such that all the basis vectors lie in $k[i]^n$.  Let $\{z_1,...,z_n\}$ be a basis for $k^n$. For each $j$, let $u_j = z_j+iAz_j$ Note that 
\begin{align*}
Au_j &= A(z_j+iAz_j)\\ 
&= (A+iA^2)z_j\\ 
&= (A-iI)z_j\\ 
&= -i(z_j+iAz_j)\\ 
&= -iu_j.
\end{align*} 
So, $\{u_1,...,u_n\}$ must span $E(A,-i)$. Thus, we can appropriately choose $\frac{n}{2}$ of these vectors and form a basis for $E(A,-i)$. Note that each of these vectors lies in $k[i]^n$. Label these basis vectors as $v_1,...,v_\frac{n}{2}$. We can write each of these vectors as $v_j = x_j+iy_j$. By the previous lemma, we know that $x_j-iy_j \in E(A,i)$. Since these vectors will be linearly independent, then they form a basis for $E(A,i)$.
\end{proof}

We are now able to prove results that characterize the matrices that induce Type 3 $k$-involutions, and then use these characterizations to find conditions on these $k$-involutions that are equivalent to isomorphy. We will have to prove our result by looking at separate cases, depending on whether or not $i$ lies in $k$. We begin by assuming that $i \in k$.

\begin{lem}
\label{Type3ClassYesSo}
Assume $i \in k$ and suppose $\theta = \Inn_A$ is a Type 3 $k$-involution of $\So(n,k,\beta)$, where $A \in \oo(n,k,\beta)$. Then, $A = X \left(\begin{smallmatrix}-iI_{\frac{n}{2}} &0 \\ 0& iI_{\frac{n}{2}}\end{smallmatrix}\right) X^{-1}$ for some $X \in \Gl(n,k),$ where $X^TMX = \left(\begin{smallmatrix}0 & X_1\\ X_1 & 0\end{smallmatrix}\right)$, where $X_1$ is a diagonal matrix.
\end{lem}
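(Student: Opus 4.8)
The plan is to diagonalize $A$ over $k$ directly, which is possible because $i \in k$ forces both eigenvalues $\pm i$ of $A$ (the roots of its minimal polynomial $t^2+1$) to lie in $k$. First I would invoke Lemma~\ref{Type3EigenSo} to obtain $x_j \pm iy_j$ with $x_j,y_j \in k^n$ forming bases of $E(A,-i)$ and $E(A,i)$; since $i \in k$ these eigenvectors already lie in $k^n$, so $U := E(A,-i)$ and $V := E(A,i)$ are $k$-subspaces of $k^n$, each of dimension $\frac{n}{2}$, with $U \oplus V = k^n$. For any $X \in \Gl(n,k)$ whose first $\frac{n}{2}$ columns form a basis of $U$ and whose last $\frac{n}{2}$ columns form a basis of $V$, we automatically get $X^{-1}AX = \left(\begin{smallmatrix}-iI_{\frac{n}{2}} & 0 \\ 0 & iI_{\frac{n}{2}}\end{smallmatrix}\right)$, hence the asserted factorization of $A$. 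Everything then reduces to choosing these bases so that $X^TMX$ acquires the stated block-anti-diagonal form with diagonal blocks.

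The key structural input is that $A \in \oo(n,k,\beta)$ makes each eigenspace totally isotropic. Applying $\beta(Au,Av) = \beta(u,v)$ to $u,v \in U$ gives $\beta(u,v) = (-i)(-i)\beta(u,v) = -\beta(u,v)$, so $2\beta(u,v)=0$ and hence $\beta(u,v)=0$ since $\chr(k)\ne 2$; the identical computation shows $\beta \equiv 0$ on $V$. Consequently both diagonal blocks of $X^TMX = (\beta(c_j,c_k))_{j,k}$, the $c_j$ being the columns of $X$, vanish. Moreover, because $\beta$ is non-degenerate on $k^n = U \oplus V$ while $U$ and $V$ are complementary totally isotropic subspaces, the restricted pairing $U \times V \to k$, $(u,v)\mapsto \beta(u,v)$, must be non-degenerate: a vector of $U$ pairing to $0$ with all of $V$ also pairs to $0$ with all of $U$, hence lies in the radical of $\beta$, which is trivial, and symmetrically for $V$.

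It then remains to diagonalize this non-degenerate pairing over $k$. I would fix any basis $e_1,\dots,e_{\frac{n}{2}}$ of $U$ and use the isomorphism $V \to U^{*}$, $v \mapsto \beta(\cdot,v)$, to select the dual basis $f_1,\dots,f_{\frac{n}{2}}$ of $V$ characterized by $\beta(e_j,f_k)=\delta_{jk}$; alternatively one peels off a pair $(e_1,f_1)$ with $\beta(e_1,f_1)\ne 0$, passes to $\{u \in U : \beta(u,f_1)=0\}$ and $\{v \in V : \beta(e_1,v)=0\}$, and inducts, yielding $\beta(e_j,f_k)=d_j\delta_{jk}$ with $d_j \in k^{*}$. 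Taking $X = (e_1 \mid \cdots \mid e_{\frac{n}{2}} \mid f_1 \mid \cdots \mid f_{\frac{n}{2}})$ gives $X \in \Gl(n,k)$ whose columns are still $(-i)$- and $i$-eigenvectors in the required order, and the symmetry of $M$ identifies the lower-left block of $X^TMX$ with the transpose of the upper-right one, so $X^TMX = \left(\begin{smallmatrix}0 & X_1 \\ X_1 & 0\end{smallmatrix}\right)$ with $X_1 = \diag(d_1,\dots,d_{\frac{n}{2}})$ diagonal, as required.

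I expect the only genuine subtlety to be arranging the off-diagonal Gram block to be diagonal while keeping all vectors in $k^n$; this is handled cleanly by the dual-basis (or inductive hyperbolic-splitting) construction, which is available precisely because the $U$–$V$ pairing is non-degenerate over $k$. The isotropy computation and the non-degeneracy of the induced pairing carry the real content, while the remaining verifications are routine linear algebra.
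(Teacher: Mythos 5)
Your proof is correct, and its primary route is genuinely cleaner than the paper's, though it rests on the same two structural facts. The paper builds the basis recursively inside $k^n$: pick $a_1 \in E(A,-i)\cap k^n$, obtain $b_1$ by splitting a vector non-orthogonal to $a_1$ into its two eigencomponents, pass to the orthogonal complement $F_1$ of $\Span_k(a_1,b_1)$, manufacture the next $(-i)$-eigenvector inside $F_1$ as $a_2 = x+iAx$, and iterate, invoking nondegeneracy of $\beta|_{F_j}$ at each stage. Your argument instead isolates the two facts that silently drive that recursion --- total isotropy of each eigenspace (the paper uses this without proof when it asserts $\beta(a_1,t_{-i})=0$) and nondegeneracy of the induced pairing $U\times V\to k$ --- and then finishes in one stroke with the dual basis of $U^{*}$, with no induction, no orthogonal complements in $k^n$, and no need to check the small points the paper glosses over (that $F_1$ is $A$-invariant, and that $x+iAx\neq 0$ for a suitable choice of $x\in F_1$). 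You even get the sharper normalization $X_1=I_{\frac{n}{2}}$, i.e.\ $X^TMX$ in standard hyperbolic form, where the lemma only asks for $X_1$ diagonal; your fallback ``peel off a pair $(e_1,f_1)$ and induct'' variant is essentially the paper's construction transplanted into $U\times V$. What the paper's recursive template buys in exchange is uniformity: the same peeling construction is recycled almost verbatim in Lemma \ref{Type3ClassNoSo} and Lemmas \ref{Type4ClassYesSo}--\ref{Type4ClassNoSo}, where the eigenvectors do not lie in $k^n$ and one cannot simply choose arbitrary $k$-bases of the eigenspaces, whereas your one-shot dual-basis argument exploits the split situation $i\in k$ and does not transfer directly to those cases.
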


\begin{proof}

We know from Lemma \ref{Type3EigenSo} that we have bases for $E(A,-i)$ and $E(A,I)$ that lie in $k^n$. We will show that we can in fact choose bases $a_1,...,a_{\frac{n}{2}}$ for $E(A,-i) \cap k^n$ and $b_1,...,b_{\frac{n}{2}}$ for $E(A,i) \cap k^n$ such that $\beta(a_j,a_l) = 0 = \beta(b_j,b_l)$ and $\beta(a_j, b_l)$ is nonzero if and only if $j=l$. We will build these bases recursively.

First, we know that we can choose some nonzero $a_1 \in E(A,-i) \cap k^n$. Then, since $\beta$ is non degenerate, we can choose a vector $t$ such that $\beta(a_1, t) \ne 0$. We note that $E(A,-i) \oplus E(A,i) = k^n$, so we can choose $t_{-i} \in E(A,-i)\cap k^n$ and $t_i \in E(A,i)\cap k^n$ such that $t = t_{-i}+t_i$. Since $\beta(a_1, t_{-i}) = 0$, then it follows that $\beta(a_1, t_i) \in k$ is nonzero. Let $b_1 = t_i$.

Let $E_1 = \Span_k(a_1,b_1)$ and let $F_1$ be the orthogonal complement of $E_1$ in $k^n$. Since the system of linear equations $$\beta(a_1,x) = 0$$ $$\beta(b_1,x) =0$$ has $n-2$ free variables, then we see that $F_1$ has dimension $n-2$. 

We now need to find $a_2 \in F_1 \cap E(A,-i)$. Similar to the construction in the previous lemma, we can choose $x \in F_1$, and let $a_2 = x+iAx$. It follows that $a_2 \in F_1 \cap E(A,-i)$. Now we want $b_2 \in F_2 \cap E(A,i)$ such that $\beta(a_2, b_2) =1$. Since $\beta|_{F_1}$ is non degenerate, then there exists some $y \in F_2$ such that $\beta(a_2,y) \ne 0$. Similar to the construction of $b_1$, we see that this implies the existence a vector $b_2$ that fits our criteria. 

Now, we let $E_2 = \Span_k(a_1,a_2,b_1,b_2)$ and let $F_2$ be the orthogonal complement of $E_2$ in $k^n$. We continue this same argument $\frac{n}{2}$ times, until we have the bases that we wanted to find. Let $$X= (a_1,...,a_{\frac{n}{2}}, b_1,...,b_{\frac{n}{2}}).$$ Then, the result follows.
\end{proof}

\begin{theorem}
\label{type3lemYesSo}
Assume that $i \in k$. Then, if $\Inn_A$ and $\Inn_B$ are both Type 3 $k$-involutions of $\So(n,k,\beta)$, then $\Inn_A$ and $\Inn_B$ are isomorphic over $\oo(n,k,\beta)$.
\end{theorem}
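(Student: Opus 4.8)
The plan is to reduce the claim, via Lemma \ref{TidyLem}, to producing a single matrix $Q \in \oo(n,k,\beta)$ with $Q^{-1}AQ = B$; since we only need isomorphy over $\oo(n,k,\beta)$ rather than $\So(n,k,\beta)$, there is no determinant constraint to contend with, so exhibiting one such $Q$ suffices. First I would invoke Lemma \ref{Type3ClassYesSo} to put both involutions in canonical form: write $A = XDX^{-1}$ and $B = YDY^{-1}$ with $D = \left(\begin{smallmatrix} -iI_{\frac{n}{2}} & 0 \\ 0 & iI_{\frac{n}{2}}\end{smallmatrix}\right)$, where $X, Y \in \Gl(n,k)$ satisfy $X^TMX = \left(\begin{smallmatrix} 0 & X_1 \\ X_1 & 0 \end{smallmatrix}\right)$ and $Y^TMY = \left(\begin{smallmatrix} 0 & Y_1 \\ Y_1 & 0\end{smallmatrix}\right)$ for diagonal matrices $X_1, Y_1$.

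Next I would look for $Q$ of the form $Q = XRY^{-1}$, mirroring the constructions in the proofs of Theorems \ref{type1lemSo} and \ref{type2lemSo}. A direct computation gives $Q^{-1}AQ = YR^{-1}DRY^{-1}$, so $Q^{-1}AQ = B$ precisely when $R$ commutes with $D$. Because $D$ consists of the two distinct scalar blocks $-iI_{\frac{n}{2}}$ and $iI_{\frac{n}{2}}$, this forces $R = \left(\begin{smallmatrix} R_1 & 0 \\ 0 & R_2\end{smallmatrix}\right)$ to be block diagonal. The remaining requirement $Q \in \oo(n,k,\beta)$, i.e.\ $Q^TMQ = M$, is equivalent to $R^T(X^TMX)R = Y^TMY$; expanding with the anti-diagonal forms, this collapses to the single matrix equation $R_1^T X_1 R_2 = Y_1$, its transpose giving the other block.

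The key observation --- and the reason all Type 3 $k$-involutions collapse into one isomorphy class --- is that the congruence $R_1^T X_1 R_2 = Y_1$ is always solvable. Since $M$ is non-degenerate and $X$ is invertible, $\det\left(\begin{smallmatrix} 0 & X_1 \\ X_1 & 0\end{smallmatrix}\right) = (-1)^{n/2}(\det X_1)^2 \ne 0$, so $X_1$ is an invertible diagonal matrix, and likewise $Y_1$. I would then simply set $R_1 = I_{\frac{n}{2}}$ and $R_2 = X_1^{-1}Y_1$, both lying in $\Gl(\frac{n}{2},k)$, which plainly satisfies $R_1^T X_1 R_2 = Y_1$. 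With this $R$, the matrix $Q = XRY^{-1} \in \Gl(n,k)$ satisfies both $Q^{-1}AQ = B$ and $Q^TMQ = M$, hence $Q \in \oo(n,k,\beta)$, and Lemma \ref{TidyLem} then completes the proof.

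I expect no serious obstacle: the argument rests on recognizing that Lemma \ref{Type3ClassYesSo} forces the associated form $X^TMX$ into the hyperbolic, anti-diagonal shape, and that any two such forms of the same size are congruent through a block-diagonal change of basis --- precisely the freedom compatible with preserving $D$. The only points needing care are the invertibility of $X_1$ and $Y_1$ and the verification that the block-diagonal $R$ genuinely lies in $\Gl(n,k)$ rather than merely in $\M(\frac{n}{2},k)$, both of which are immediate.
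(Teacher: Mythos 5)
Your proposal is correct and follows essentially the same route as the paper's own proof: both reduce via Lemma \ref{TidyLem}, put $A$ and $B$ in the canonical form of Lemma \ref{Type3ClassYesSo}, and take $Q = XRY^{-1}$ with a block-diagonal $R = \left(\begin{smallmatrix} R_1 & 0 \\ 0 & R_2 \end{smallmatrix}\right)$ satisfying $R_1^T X_1 R_2 = Y_1$. Your explicit choice $R_1 = I_{\frac{n}{2}}$, $R_2 = X_1^{-1}Y_1$ simply instantiates the $R_1, R_2$ whose existence the paper asserts from the invertibility of $X_1$ and $Y_1$.
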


\begin{proof}

Suppose we have two such $k$-involutions of $\So(n,k,\beta)$. Let them be represented by matrices $A,B \in \oo(n,k,\beta)$. By the previous Lemma, we can choose diagonal $X, Y \in \Gl(n,k)$ such that $$X^{-1}AX = \left(\begin{array}{cc}-iI & 0 \\0 & iI\end{array}\right) = Y^{-1}BY,$$ $$X^TMX = \left(\begin{array}{cc}0 & X_1\\ X_1 & 0\end{array}\right),$$ and  $$Y^TMY = \left(\begin{array}{cc}0 & Y_1\\ Y_1 & 0\end{array}\right).$$

Since $X_1$ and $Y_1$ are both invertible diagonal matrices, then we can choose $R_1$ and $R_2 \in \Gl(\frac{n}{2},k)$ such that $Y_1 = R_1^TX_1R_2$. Let $R = \left(\begin{smallmatrix}R_1 & 0 \\0 & R_2\end{smallmatrix}\right)$ and $Q = XRY^{-1}$. It follows from this that $R^TX^TMXR = Y^TMY$. We will show that $Q \in \oo(n,k,\beta)$ and $Q^{-1}AQ = B$. This will then prove that $\Inn_A$ and $\Inn_B$ lie in the same isomorphy class by Lemma \ref{TidyLem}. 

First we show that $Q \in \oo(n,k,\beta)$. Note that 
\begin{align*}
Q^TMQ &= (XRY^{-1})^TM(XRY^{-1})\\ 
&= (Y^{-1})^TR^T(X^TMX)RY^{-1}\\ 
&= (Y^{-1})^T(Y^TMY)Y^{-1}\\
&= M,
\end{align*} 
which proves this claim. 

Lastly, we show that $Q^{-1}AQ = B$. We first note that $R$ and $\left(\begin{smallmatrix}-iI & 0 \\0 & iI\end{smallmatrix}\right)$ commute. Then, we see that
\begin{align*}
Q^{-1}AQ &= (XRY^{-1})^{-1}A (XRY^{-1})\\
&= YR^{-1}(X^{-1}AX)RY^{-1}\\
&= Y R^{-1}\left(\begin{array}{cc}-iI & 0 \\0 & iI\end{array}\right)R Y^{-1}\\ 
&= Y R^{-1}R\left(\begin{array}{cc}-iI & 0 \\0 & iI\end{array}\right) Y^{-1}\\
&= Y \left(\begin{array}{cc}-iI & 0 \\0 & iI\end{array}\right) Y^{-1}\\
&= B.
\end{align*}
 
\end{proof}

We now begin examining the case where $i \not \in k$.

\begin{lem}
\label{Type3ClassNoSo}
Assume $i \not \in k$ and suppose $\theta=\Inn_A$ is a Type 3 $k$-involution of $\So(n,k,\beta)$. Then, $A = U \left(\begin{smallmatrix}0 & -I_{\frac{n}{2}} \\ I_{\frac{n}{2}} & 0\end{smallmatrix}\right) U^{-1}$ for $$U  = \left(\begin{array}{cccccccccc}a_1 & a_2 & \cdots & a_\frac{n}{2} &b_1 & b_2 & \cdots & b_\frac{n}{2}  \end{array}\right)\in \Gl(n,k),$$ where the $a_j+ib_j$ are a basis for $E(A,-i)$, the $a_j-ib_j$ are a basis for $E(A,i)$, and $U^TMU = \left(\begin{smallmatrix}U_1 & 0 \\ 0 & U_1\end{smallmatrix}\right)$ is a diagonal matrix.
\end{lem}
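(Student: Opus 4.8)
The plan is to realify the eigenspace decomposition $k[i]^n = E(A,-i)\oplus E(A,i)$ and to encode the relevant part of $\beta$ as a Hermitian form over $k[i]/k$, whose diagonalization will produce the required frame $U$. I would start from Lemma~\ref{Type3EigenSo}, which provides a $k[i]$-basis of $E(A,-i)$ built from vectors in $k^n$. For any $a,b\in k^n$ with $a+ib\in E(A,-i)$, the identity $A(a+ib)=-i(a+ib)=b-ia$ separates into its $k^n$-components as $Aa=b$ and $Ab=-a$. Hence, if $U$ is the matrix with columns $a_1,\dots,a_{n/2},b_1,\dots,b_{n/2}$ coming from any basis $a_j+ib_j$ of $E(A,-i)$, then $AU=U\left(\begin{smallmatrix}0&-I_{n/2}\\ I_{n/2}&0\end{smallmatrix}\right)$, which is the asserted block form. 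Since $\{a_j+ib_j\}\cup\{a_j-ib_j\}$ is a $k[i]$-basis of $k[i]^n$ (the second set spans $E(A,i)$ by the preceding lemma), the $n$ vectors $a_j,b_j$ span $k[i]^n$ and are therefore $k$-independent, so $U\in\Gl(n,k)$.

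All the work is in arranging $U^TMU$ to have the stated shape by a good choice of basis. First I would record the orthogonality forced by $A\in\oo(n,k,\beta)$: for $Au=\lambda u$ and $Aw=\mu w$ one has $\beta(u,w)=\beta(Au,Aw)=\lambda\mu\,\beta(u,w)$, so $\beta(u,w)=0$ unless $\lambda\mu=1$. As the eigenvalues are $\pm i$ and $(\pm i)^2=-1\neq1$, each of $E(A,\pm i)$ is totally isotropic and $\beta$ induces a perfect pairing between them. Expanding $0=\beta(a_j+ib_j,\,a_l+ib_l)$ and separating the $k$-part from the $i$-part gives $\beta(a_j,a_l)=\beta(b_j,b_l)$ and $\beta(a_j,b_l)=-\beta(b_j,a_l)$; with $j=l$ the latter yields $\beta(a_j,b_j)=0$. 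Thus $U^TMU=\left(\begin{smallmatrix}P&S\\-S&P\end{smallmatrix}\right)$ with $P=[\beta(a_j,a_l)]$ symmetric and $S=[\beta(a_j,b_l)]$ skew-symmetric.

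Next I would introduce the Hermitian form $h(v,w)=\beta(\bar v,w)$ on $E(A,-i)$, where $\bar{\phantom v}$ is complex conjugation, which carries $E(A,-i)$ onto $E(A,i)$ by the preceding lemma. Since $\beta$ is symmetric with entries in $k$, one checks $h(w,v)=\overline{h(v,w)}$, so $h$ is Hermitian, and it is non-degenerate because $\beta$ pairs the two eigenspaces perfectly. A direct expansion gives $h(a_j+ib_j,\,a_l+ib_l)=2\big(\beta(a_j,a_l)+i\,\beta(a_j,b_l)\big)$, so the Gram matrix of $h$ is $2(P+iS)$. I would then diagonalize $h$: choose a basis $w_j=a_j+ib_j$ of $E(A,-i)$ with $h(w_j,w_l)=\delta_{jl}d_j$ and $d_j\in k^*$. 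Because each $d_j$ lies in $k$ while $i\notin k$, the relation $h(w_j,w_l)=2(\beta(a_j,a_l)+i\beta(a_j,b_l))$ forces $\beta(a_j,b_l)=0$ for all $j,l$ and $\beta(a_j,a_l)=\tfrac12\delta_{jl}d_j$; combined with $\beta(b_j,b_l)=\beta(a_j,a_l)$ this gives $U^TMU=\left(\begin{smallmatrix}U_1&0\\0&U_1\end{smallmatrix}\right)$ with $U_1=\diag(\tfrac12 d_1,\dots,\tfrac12 d_{n/2})$, as required. Passing to the $h$-orthogonal basis replaces $U$ by $UR$ with $R=\left(\begin{smallmatrix}C_1&C_2\\-C_2&C_1\end{smallmatrix}\right)$ arising from the $k[i]$-change of basis $C=C_1+iC_2$, which preserves the real/imaginary block structure and hence the relation $AU=U\left(\begin{smallmatrix}0&-I\\ I&0\end{smallmatrix}\right)$.

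The main obstacle is the diagonalizability of $h$, i.e.\ the existence of a vector with $h(v,v)\neq0$ whenever $h\neq0$; granting this, an orthogonalization argument in the style of Lemma~\ref{Type3ClassYesSo} finishes the induction. I would establish it by contradiction: if $h(v,v)=0$ for every $v$, then polarizing $h(u+v,u+v)$ and $h(u+iv,u+iv)$ and using the field trace $z\mapsto z+\bar z$ of $k[i]/k$ shows that both the $k$-part and the $i$-part of $h(u,v)$ vanish for all $u,v$, so $h\equiv0$, contradicting non-degeneracy. This is precisely where $\chr(k)\neq2$ enters; everything else is bookkeeping with the block matrices.
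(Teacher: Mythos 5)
Your proposal is correct, and it is a genuinely different route from the paper's. The paper proves the lemma by a hands-on recursion inside $k[i]^n$: it picks a $\beta$-anisotropic vector $x\in k^n$ (found by an explicit argument on the entries of $M$, taking $x=e_j$ or $x=e_j+e_l$), sets $a_1=x$, $b_1=Ax$, verifies $\beta(a_1+ib_1,a_1-ib_1)=2\beta(x,x)\neq 0$, passes to the orthogonal complement of $\Span_{k[i]}(a_1,b_1)$, and repeats $\frac n2$ times; the block-diagonal shape of $U^TMU$ is then extracted from the same isotropy relations you derive. You instead package the pairing between $E(A,-i)$ and $E(A,i)$ as the Hermitian form $h(v,w)=\beta(\bar v,w)$ on $E(A,-i)$, compute its Gram matrix $2(P+iS)$, and reduce the whole lemma to diagonalizability of a non-degenerate Hermitian form over $k[i]/k$, proved via the polarization/trace argument; realifying the change of basis $C=C_1+iC_2$ to $R=\left(\begin{smallmatrix}C_1&C_2\\-C_2&C_1\end{smallmatrix}\right)$, which commutes with $\left(\begin{smallmatrix}0&-I\\ I&0\end{smallmatrix}\right)$, finishes the translation. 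Your approach buys two things: it explains conceptually why the two diagonal blocks of $U^TMU$ coincide and why the off-diagonal block vanishes (a diagonal Hermitian Gram matrix has entries in $k$, so its $i$-part $S$ dies), and it makes the inductive step airtight --- the existence of anisotropic vectors at every stage follows from non-degeneracy by polarization, whereas the paper's corresponding claim for the later stages (``we can find a nonzero vector $x \in F_1 \cap k^n$ such that $\beta|_{F_1}(x,x) = 0$,'' where the ``$=0$'' is evidently a typo for ``$\neq 0$'') is asserted rather than argued. What the paper's version buys is concreteness: it produces the vectors $a_j, b_j=Aa_j$ explicitly without invoking any theory of Hermitian forms, which fits the computational style of the rest of the section. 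Note also that the two existence statements are equivalent, since every $v\in E(A,-i)$ has the form $a+iAa$ with $a\in k^n$ and then $h(v,v)=2\beta(a,a)$, so your polarization argument can be read as a cleaner proof of exactly the step the paper leaves vague.
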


\begin{proof}

We know from Lemma \ref{Type3EigenSo} that we have bases for $E(A,-i)$ and $E(A,I)$ that lie in $k[i]^n$. We will show that we can in fact choose bases $a_1+ib_1,...,a_{\frac{n}{2}}+ib_{\frac{n}{2}}$ for $E(A,-i) \cap k[i]^n$ and $a_1-ib_1,...,a_{\frac{n}{2}}-ib_{\frac{n}{2}}$ for $E(A,i) \cap k[i]^n$ such that $\beta(a_j+ib_j, a_l-ib_l)$ is nonzero if and only if $j=l$. From this, we will be able to show that $\beta(a_j,a_l) = 0 = \beta(b_j,b_l)$ when $j \ne l$ and $\beta(a_j, b_l) = 0$ for all $j$ and $l$. We will build these bases recursively.

Recall that given any vector $x \in k^n$, we know that $x+iAx \in E(A,-i)$. We want to choose $x\in k^n$ such that $\beta(x,x) \ne 0$. (The reasons for this will become apparent.) $M$ is an invertible matrix, so there are at least $n$ instances of $e_j^TMe_l \ne 0$. If there is an instance where $j=l$, let $x=e_j$. If not, then instead we have  $e_j^TMe_l = 0 = e_l^TMe_j$, and we let $x = e_j+e_l$. Then, 
\begin{align*}
\beta(x,x) &= \beta(e_j+e_l, e_j+e_l)\\ 
&= 2 \beta(e_j,e_l)\\ 
&\ne 0.
\end{align*} 

So, we have $x \in k^n$ such that $\beta(x,x) \ne 0$, and we have $x+iAx \in E(A,-i)$. Let $a_1 = x$ and $b_1 = Ax$. So, $a_1+ib_1 \in E(A,-i)$ and $a_1-ib_1 \in E(A,i)$.  From this, it follows that 
\begin{align*}
\beta(a_1+ib_1, a_1-ib_1) &= (\beta(a_1,a_1) +\beta(b_1,b_1))+i(-\beta(a_1,b_1)+\beta(a_1,b_1)\\ 
&= 2\beta(a_1,a_1) = 2\beta(x,x)\\ 
&\ne 0.
\end{align*}

Let $E_1 = \Span_{k[i]}(a_1+ib_1, a_1-ib_1) = \Span_{k[i]}(a_1,b_1)$, and let $F_1$ be the orthogonal complement of $E_1$ over $k[i]$. $F_1$ has dimension $n-2$, and $\beta|_{F_1}$ is nondegenerate. So, we can find a nonzero vector $x \in F_1 \cap k^n$ such that $\beta|_{F_1}(x,x) = 0$. So, as in the last case, let $a_2 = x$ and $b_2 = Ax$. As before, we have $\beta(a_1+ib_1, a_1-ib_1) \ne 0$.

Let $E_2 = \Span_{k[i]}(a_1,a_2,b_1,b_2)$, and let $F_2$ be the orthogonal complement of $E_2$ over $k[i]$. In this manner, we can create the bases that we noted in the opening paragraph of this proof. 

Note that we always have $$0 = \beta(a_j+ib_j, a_l+ib_l) = (\beta(a_j,a_l)-\beta(b_j,b_l))+i(\beta(a_j,b_l)+\beta(b_j,a_l)),$$ and when $j \ne l$ we have $$0 = \beta(a_j+ib_j, a_l-ib_l) = (\beta(a_j,a_l)+\beta(b_j,b_l))+i(-\beta(a_j,b_l)+\beta(b_j,a_l)).$$

This tells us that when $j \ne l$ that $$\beta(a_j,b_l) = \beta(a_j,a_l) = \beta(b_j,b_l) = 0.$$ When $j = l$, we see that $\beta(b_j,b_j) = \beta(a_j,a_j)$ and that $\beta(a_j, b_j)= -\beta(b_j,a_j)$. The last of these shows that $\beta(a_j,b_l) = 0$, regardless of the values of $j$ and $l$.

Let $$U = (a_1,...,a_{\frac{n}{2}},b_1,...,b_{\frac{n}{2}}).$$ Then, it follows that $U^TMU = \left(\begin{smallmatrix}U_1 & 0 \\0 & U_1\end{smallmatrix}\right)$ where $U_1$ is a diagonal $\frac{n}{2} \times \frac{n}{2}$ matrix.

Lastly, since $b_j = Aa_j$, then it follows that $Ab_j = -a_j$. So, we have that  $$A = U\left(\begin{array}{cc}0 & -I_{\frac{n}{2}} \\ I_{\frac{n}{2}} & 0\end{array}\right) U^{-1}.$$
\end{proof}

We now look at an example that highlights some of these results that we have just proven in the Type 3 case.

\begin{beisp}
Assume that $\beta$ is the standard dot product. Then, $\theta$ can be a Type 3 $k$-involution of $\So(4,\mathbb{R})$ only if we can choose $A \in \oo(4,\mathbb{R})$ such that $A^2 =-I$. This means the matrix must satisfy $A^T = -A$. That is, the matrix must be skew-symmetric. Observe that the matrix $$A = \left(\begin{array}{cccc}0 & 1 & 0 & 0 \\-1 & 0 & 0 & 0 \\0 & 0 & 0 & 1 \\0 & 0 & -1 & 0\end{array}\right) \in \oo(4, \mathbb{R})$$ is skew-symmetric, so it induces a Type 3 $k$-involution of $\So(4, \mathbb{R})$. It can be shown that $E(A,-i)$ has dimension 2. A basis for this subspace is formed by the vectors $$v_1 =  \left(\begin{array}{c}0\\ 0 \\0 \\1\end{array}\right)+ i \left(\begin{array}{c}0 \\ 0 \\1 \\0\end{array}\right)$$ and $$ v_2=\left(\begin{array}{c} 0 \\ 1 \\0 \\0\end{array}\right)+ i\left(\begin{array}{c}1 \\ 0 \\ 0 \\0\end{array}\right).$$ 

It can be shown that $$v_3 =  \left(\begin{array}{c}0\\ 0 \\0 \\1\end{array}\right)- i \left(\begin{array}{c}0 \\ 0 \\1 \\0\end{array}\right)$$ and $$ v_4=\left(\begin{array}{c} 0 \\ 1 \\0 \\0\end{array}\right)- i\left(\begin{array}{c}1 \\ 0 \\0 \\0\end{array}\right)$$  are $i$-eigenvectors of $A$, where these are the conjugates of $v_1$ and $v_2$, respectively.  

Following the notation of the previous lemma, we have $$U = \left(\begin{array}{cccc}0 & 0 & 0 & 1 \\0 & 1 & 0 & 0 \\0 & 0 & 1 & 0 \\1 & 0 & 0 & 0\end{array}\right),$$ where $U^TU = I_4$ and $A = U \left(\begin{array}{cc}0 & -I_{\frac{n}{2}} \\ I_{\frac{n}{2}} & 0\end{array}\right) U^{-1}$ . Using the notation of Lemma \ref{Type3ClassNoSo}, we note that $U_1 = I_2$.
\end{beisp}

We now find conditions on Type 3 $k$-involutions that are equivalent to isomorphy, in the case that $i \not \in k$.

\begin{theorem}
\label{type3lemNoSo}
Assume $i \not \in k$. Then, if $\Inn_A$ and $\Inn_B$ are both Type 3 $k$-involutions of $\So(n,k,\beta)$, then $\Inn_A$ and $\Inn_B$ are isomorphic over $\oo(n,k,\beta)$.
\end{theorem}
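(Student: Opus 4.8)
The plan is to mirror the proof of Theorem \ref{type3lemYesSo}, but the shortcut available there is gone: when $i \in k$ the form became $\left(\begin{smallmatrix}0 & X_1 \\ X_1 & 0\end{smallmatrix}\right)$, for which any two invertible diagonal blocks are trivially congruent, whereas here the form is genuinely definite-like and the argument will reduce to a statement about Hermitian forms over the honest quadratic extension $k[i]/k$. First I would apply Lemma \ref{Type3ClassNoSo} to both involutions, writing $A = UJU^{-1}$ and $B = VJV^{-1}$ with $J = \left(\begin{smallmatrix}0 & -I_{\frac n2} \\ I_{\frac n2} & 0\end{smallmatrix}\right)$, $U,V \in \Gl(n,k)$, and $U^TMU = \left(\begin{smallmatrix}U_1 & 0 \\ 0 & U_1\end{smallmatrix}\right)$, $V^TMV = \left(\begin{smallmatrix}V_1 & 0 \\ 0 & V_1\end{smallmatrix}\right)$, where $U_1,V_1$ are diagonal over $k$.

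By Lemma \ref{TidyLem} it suffices to produce $Q \in \oo(n,k,\beta)$ with $Q^{-1}AQ = B$. Setting $Q = URV^{-1}$ for some $R \in \Gl(n,k)$, the identity $Q^{-1}AQ = VR^{-1}JRV^{-1}$ shows $Q^{-1}AQ = B$ precisely when $R$ commutes with $J$, while $Q^TMQ = (V^{-1})^T R^T \left(\begin{smallmatrix}U_1 & 0 \\ 0 & U_1\end{smallmatrix}\right) R\, V^{-1}$ equals $M$ precisely when $R^T \left(\begin{smallmatrix}U_1 & 0 \\ 0 & U_1\end{smallmatrix}\right) R = \left(\begin{smallmatrix}V_1 & 0 \\ 0 & V_1\end{smallmatrix}\right)$. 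So everything reduces to finding an $R$ that both commutes with $J$ and carries the doubled form of $U_1$ to that of $V_1$.

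The matrices commuting with $J$ are exactly those of block form $R = \left(\begin{smallmatrix}R_1 & R_2 \\ -R_2 & R_1\end{smallmatrix}\right)$ with $R_1,R_2 \in \M(\frac n2, k)$; under the complex structure given by $J$ (which acts as multiplication by $i$) these are precisely the realifications of the $k[i]$-linear maps $C = R_1 - iR_2$. A direct computation identifies $\left(\begin{smallmatrix}U_1 & 0 \\ 0 & U_1\end{smallmatrix}\right)$ with the trace (transfer) form of the diagonal Hermitian form $U_1$ on $k[i]^{\frac n2}$, and shows that the congruence condition above for such an $R$ is equivalent to the Hermitian congruence $\overline{C}^{\,T} U_1 C = V_1$ over $k[i]$. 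Hence a suitable $R$ exists if and only if the Hermitian forms $U_1$ and $V_1$ are isometric over $k[i]$.

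The key input is that this isometry always exists. Since $U^TMU$ and $V^TMV$ are both $k$-congruent to the same $M$, their trace forms $\left(\begin{smallmatrix}U_1 & 0 \\ 0 & U_1\end{smallmatrix}\right)$ and $\left(\begin{smallmatrix}V_1 & 0 \\ 0 & V_1\end{smallmatrix}\right)$ are $k$-isometric to one another. Invoking Jacobson's theorem --- that over a quadratic extension with $\chr(k) \ne 2$ a Hermitian form is determined up to isometry by its trace form --- this forces $U_1 \cong V_1$ over $k[i]$, yielding $C$, hence $R$, hence $Q$. The main obstacle is exactly this upgrade: an arbitrary $k$-congruence between the doubled forms need not respect the complex structure $J$, so it cannot be used directly, and one must pass from equality of trace forms to a genuine Hermitian isometry (either via Jacobson, or, exploiting that $U_1,V_1$ are diagonal, by an explicit diagonalization-and-cancellation argument over $k[i]$). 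I would close by checking that $R$, and therefore $Q = URV^{-1}$, has entries in $k$ and is invertible, its determinant being the norm $N_{k[i]/k}(\det C) \in k^*$, so that $Q \in \oo(n,k,\beta)$ and Lemma \ref{TidyLem} applies.
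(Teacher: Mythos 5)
Your argument is correct, but it follows a genuinely different route from the paper's. The paper applies Lemma \ref{Type3ClassNoSo} only to $A$, then passes to the extension field: viewed over $\So(n,k[i],\beta)$ both involutions fall under the case ``$i \in k$'', so Theorem \ref{type3lemYesSo} produces $Q_i \in \oo(n,k[i],\beta)$ with $Q_i^{-1}AQ_i = B$; it then descends, extracting from $Y = Q_i^{-1}X$ a real matrix $V \in \Gl(n,k)$ of eigenvector data for $B$ satisfying $V^TMV = U^TMU$, and finishes with $Q = UV^{-1}$ and Lemma \ref{TidyLem}. You instead apply Lemma \ref{Type3ClassNoSo} to both $A$ and $B$, identify the commutant of $J$ with the realification of $\Gl(\frac{n}{2},k[i])$, and reduce the existence of $Q$ to a Hermitian isometry $\overline{C}^{\,T}U_1C = V_1$ over $k[i]$, which Jacobson's trace-form theorem supplies because $\diag(U_1,U_1)$ and $\diag(V_1,V_1)$ are both $k$-congruent to $M$. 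Your computational claims all check out: the commutant of $J$ does consist of the blocks $\left(\begin{smallmatrix}R_1 & R_2 \\ -R_2 & R_1\end{smallmatrix}\right)$, a sesquilinear form is determined by its real part (so the $k$-congruence condition for such an $R$ is exactly the Hermitian congruence for $C$), and since only isomorphy over $\oo(n,k,\beta)$ is claimed, no determinant correction of $Q$ is needed. As for what each approach buys: the paper's proof is self-contained, recycling its own theorem for $i \in k$ and requiring no outside classification result, whereas yours imports Jacobson's theorem (or the explicit cancellation argument you sketch as an alternative). In exchange, your reduction names the actual invariant, the Hermitian isometry class of $U_1$, which makes the ``always isomorphic'' conclusion transparent; it also sidesteps the one delicate point in the paper's descent, namely the assertion that the columns of $Y = Q_i^{-1}X$ remain conjugate in pairs, which is immediate only when $Q_i$ is defined over $k$ (entrywise conjugation gives $\overline{Q_i^{-1}x_j} = \overline{Q_i^{-1}}\,\overline{x_j}$, not $Q_i^{-1}\overline{x_j}$), so your version is arguably the more airtight of the two.
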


\begin{proof}

By the Lemma \ref{Type3ClassNoSo}, we can choose a matrix $U \in \Gl(n,k)$ such that $$A = U \left(\begin{array}{cc}0 & -I_{\frac{n}{2}} \\ I_{\frac{n}{2}} & 0\end{array}\right) U^{-1}
\text{ for }
U  = \left(\begin{array}{cccccccccc}a_1 & a_2 & \cdots & a_\frac{n}{2} &b_1 & b_2 & \cdots & b_\frac{n}{2}  \end{array}\right)\in \Gl(n,k),$$ where the $a_j+ib_j$ are a basis for $E(A,-i)$, the $a_j-ib_j$ are a basis for $E(A,i)$, and $U^TMU = \left(\begin{smallmatrix}U_1 & 0 \\ 0 & U_1\end{smallmatrix}\right)$ is a diagonal matrix. 

Let $$X = (a_1+ib_1,...,a_{\frac{n}{2}}+ib_{\frac{n}{2}}, a_1-ib_1,...,a_{\frac{n}{2}}-ib_{\frac{n}{2}}),$$ and consider $\Inn_A$ and $\Inn_B$ as $k$-involutions of $\So(n,k[i],\beta)$. By construction, we see that $X$ is a matrix that satisfies the conditions of Lemma \ref{Type3ClassYesSo} for the group $\So(n,k[i],\beta)$. We note that $X_1 = 2U_1$. We also know by Theorem \ref{type3lemYesSo} that $\Inn_A$ and $\Inn_B$ are isomorphic (when viewed as $k$-involutions of $\So(n,k[i],\beta)$) over $\oo(n,k[i],\beta)$. So, we can choose $Q_i \in \oo(n,k[i],\beta)$ such that $Q_i^{-1}AQ_i = B$. Let $Y = Q_i^{-1}X$. We now show a couple of facts about $Y$.

First, we note that since $Y$ was obtained from $X$ via row operations, then for $1 \le j \le \frac{n}{2}$, the $j$th and $\frac{n}{2}+j$th columns are $i$-conjugates of one another.

Also, note that 
\begin{align*}
Y^{-1}BY &= (Q_i^{-1}X)^{-1}B (Q_i^{-1}X)\\ 
&= X^{-1}Q_iBQ_i^{-1}X\\
&= X^{-1}AX\\ 
&= \left(\begin{array}{cc} -iI_{\frac{n}{2}} & 0 \\0&  iI_{\frac{n}{2}} \end{array}\right).
\end{align*}

Lastly, we see that 
\begin{align*}
Y^TMY &= (Q_i^{-1}X)^TM(Q_i^{-1}X)\\ 
& = X^T((Q_i^{-1})^TMQ_i)X\\
&= X^TMX\\
&= \left(\begin{array}{cc} 0 & X_1 \\ X_1 & 0 \end{array}\right)\\ 
&= \left(\begin{array}{cc} 0 & 2U_1 \\ 2U_1 & 0 \end{array}\right).
\end{align*}

We can write $$Y = (c_1+id_1,...,c_{\frac{n}{2}}+id_{\frac{n}{2}}, c_1-id_1,...,c_{\frac{n}{2}}-id_{\frac{n}{2}})$$ where $c_j, d_j \in k^n$. So, let $$V = (c_1,...,c_{\frac{n}{2}},d_1,...,d_{\frac{n}{2}})\in \Gl(n,k).$$ It follows from what we have shown that $B = V \left(\begin{smallmatrix}0 & -I_{\frac{n}{2}} \\ I_{\frac{n}{2}} & 0\end{smallmatrix}\right) V^{-1}$ where $$V^TMV = \left(\begin{array}{cc}U_1 & 0 \\ 0 & U_1\end{array}\right) = U^TMU.$$

Now, let $Q = UV^{-1}$. We will show that $Q^{-1}AQ = B$ and $Q \in \oo(n,k,\beta)$. This will prove that $\Inn_A$ and $\Inn_B$ are isomorphic over $\oo(n,k,\beta)$ by Lemma \ref{TidyLem}. 

We first show that $Q \in \oo(n,k,\beta)$. 
\begin{align*}
Q^TMQ &= (UV^{-1})^TMUV^{-1}\\ 
&= (V^{-1})^T(U^TMU)V^{-1}\\ 
&= (V^{-1})^T(V^TMV)V^{-1}\\ 
&= M.
\end{align*}

Lastly, we show that $Q^{-1}AQ = B$. 
\begin{align*}
Q^{-1}AQ &= (UV^{-1})^{-1}A(UV^{-1})\\ 
&= VU^{-1}AUV^{-1}\\
&= V\left(\begin{array}{cc}0 & -I_{\frac{n}{2}} \\ I_{\frac{n}{2}} & 0\end{array}\right) V^{-1}\\ 
&= B.
\end{align*}

\end{proof}

Combining the results from this section, we get the following corollary. 

\begin{cor}
\label{CorType3So}
If $\Inn_A$ and $\Inn_B$ are both Type 3 $k$-involutions of $\So(n,k,\beta)$ where $A, B \in \oo(n,k \beta)$, then $\Inn_A$ and $\Inn_B$ are isomorphic over $\oo(n,k,\beta)$. That is, $\So(n,k,\beta)$ has at most one isomorphy class of Type 3 $k$-involutions.
\end{cor}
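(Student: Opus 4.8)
The plan is to observe that this corollary is an immediate dichotomy argument combining the two preceding theorems, so no new computation is required. Since $k$ is a field containing a square root of $-1$ or not, the two conditions $i \in k$ and $i \notin k$ are mutually exclusive and jointly exhaustive, and each is handled by exactly one of the two theorems already proven in this section.

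First I would fix two arbitrary Type 3 $k$-involutions $\Inn_A$ and $\Inn_B$ of $\So(n,k,\beta)$ with $A, B \in \oo(n,k,\beta)$, and note that by definition of Type 3 both $A$ and $B$ satisfy $A^2 = B^2 = -I$, so that $\pm i$ are the only eigenvalues in either case. Then I would split into the two cases. If $i \in k$, Theorem \ref{type3lemYesSo} applies directly and asserts that $\Inn_A$ and $\Inn_B$ are isomorphic over $\oo(n,k,\beta)$. If instead $i \notin k$, then Theorem \ref{type3lemNoSo} applies and yields the identical conclusion. In either case the two $k$-involutions lie in the same isomorphy class over $\oo(n,k,\beta)$.

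Finally I would remark that since every pair of Type 3 $k$-involutions is isomorphic over $\oo(n,k,\beta)$, all Type 3 $k$-involutions constitute a single isomorphy class, giving the bound of \emph{at most one} such class (the vacuous case where no Type 3 $k$-involution exists is automatically consistent with this bound). There is essentially no obstacle here: the entire content was already carried out in Theorems \ref{type3lemYesSo} and \ref{type3lemNoSo}, and the only thing to verify is that the hypotheses of this corollary (namely that $\Inn_A,\Inn_B$ are Type 3 with representatives in $\oo(n,k,\beta)$) match the hypotheses of those two theorems, which they do verbatim. The corollary is therefore a clean case-split with no remaining computational burden.
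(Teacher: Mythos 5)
Your proposal is correct and is exactly the paper's argument: the paper derives this corollary by ``combining the results from this section,'' i.e., the same dichotomy on whether $i \in k$ (Theorem \ref{type3lemYesSo}) or $i \notin k$ (Theorem \ref{type3lemNoSo}), each of which already gives isomorphy over $\oo(n,k,\beta)$. Your observation that the ``at most one'' phrasing also covers the vacuous case is a harmless and accurate addition.
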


If $\Inn_A$ and $\Inn_B$ are both Type 3 $k$-involutions of $\So(n,k,\beta)$ where $A, B \in \oo(n,k \beta)$, then $\Inn_A$ and $\Inn_B$ are isomorphic over $\oo(n,k,\beta)$. That is, $\So(n,k,\beta)$ has at most one isomorphy class of Type 3 $k$-involutions.

\subsection{Type 4 $k$-involutions}

We now move on to a similar characterization in the Type 4 case. First, we characterize the eigenvectors of the matrices that induce these $k$-involutions. Recall that we can choose $A \in \oo(n,k[\sqrt{\alpha}],\beta)$ such that each entry of $A$ is a $k$-multiple of $\sqrt{\alpha}$, and that we know $A^2 = -I$. We begin by proving a couple of lemmas about the eigenspaces of these matrices.

\begin{lem}

Suppose $A \in \oo(n,k[\sqrt{\alpha}],\beta) $ induces a Type 4 $k$-involution of $\So(n,k,\beta)$. Also suppose $x,y \in k^n$ such that $x+\sqrt{-\alpha}y \in E(A,-i)$. Then, $x-\sqrt{-\alpha}y \in E(A,i)$. Likewise, if $u,v \in k^n$ such that $u+\sqrt{-\alpha}v \in E(A,i)$. Then, $u-\sqrt{-\alpha}v \in E(A,-i)$. Further, $\dim(E(A,i))= \dim(E(A,-i))$.

\end{lem}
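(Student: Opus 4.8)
The plan is to mirror the arguments used for the Type 2 and Type 3 eigenvector lemmas, but with the field automorphism appropriate to this case. Here the matrix $A$ has entries that are $k$-multiples of $\sqrt{\alpha}$ while the eigenvalues are $\pm i$, so the eigenvectors should naturally live in $k[\sqrt{-\alpha}]^n$, where $\sqrt{-\alpha}=i\sqrt{\alpha}$. Accordingly, I would introduce the Galois automorphism $\tau$ of $k[\sqrt{-\alpha}]$ over $k$ sending $a+\sqrt{-\alpha}\,b\mapsto a-\sqrt{-\alpha}\,b$ for $a,b\in k$, extended entrywise to vectors and matrices; call this ``$\sqrt{-\alpha}$-conjugation,'' exactly as the paper did for $\sqrt{\alpha}$ and for $i$.

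The key observation, and the one that makes this conjugation usable, is that although $\tau$ cannot be applied directly to $A$ (whose entries are multiples of $\sqrt{\alpha}$, which need not lie in $k[\sqrt{-\alpha}]$), it can be applied to $iA$: since each entry of $A$ has the form $c\sqrt{\alpha}$ with $c\in k$, each entry of $iA$ is $c\,i\sqrt{\alpha}=c\sqrt{-\alpha}$, a $k$-multiple of $\sqrt{-\alpha}$. Hence $\tau(iA)=-iA$, which is the precise analogue of the fact ``the $\sqrt{\alpha}$-conjugation of $A$ is $-A$'' exploited in the Type 2 lemma.

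With this in hand the first statement becomes a short computation. Starting from $A(x+\sqrt{-\alpha}\,y)=-i(x+\sqrt{-\alpha}\,y)$ and multiplying by $i$ (using $i(-i)=1$) gives $iA(x+\sqrt{-\alpha}\,y)=x+\sqrt{-\alpha}\,y$, so $x+\sqrt{-\alpha}\,y\in E(iA,1)$. Applying $\tau$ and using $\tau(iA)=-iA$ together with $\tau(x+\sqrt{-\alpha}\,y)=x-\sqrt{-\alpha}\,y$ turns this into $iA(x-\sqrt{-\alpha}\,y)=-(x-\sqrt{-\alpha}\,y)$; multiplying by $-i$ recovers $A(x-\sqrt{-\alpha}\,y)=i(x-\sqrt{-\alpha}\,y)$, i.e. $x-\sqrt{-\alpha}\,y\in E(A,i)$. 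The second statement follows by the symmetric computation, and both hold uniformly whether or not $i\in k$. For the dimension equality I would note that $\tau$ is a semilinear involution of $k[\sqrt{-\alpha}]^n$, hence an additive bijection carrying $E(A,-i)$ onto $E(A,i)$; since a semilinear bijection sends a basis to a basis, it preserves dimension, and the two eigenspaces are therefore equidimensional.

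I expect the only real obstacle to be conceptual rather than computational, namely choosing the correct conjugation. The naive temptation is to reuse either the $\sqrt{\alpha}$-conjugation of the Type 2 case or ordinary complex conjugation of the Type 3 case, but neither fixes or negates $A$ in a way that interacts correctly with the eigenvalues $\pm i$. Recognizing that one must work inside $k[\sqrt{-\alpha}]$ and apply the conjugation to $iA$ rather than to $A$ is the crux; once that setup is in place, the remaining manipulations are routine and essentially identical to the earlier cases.
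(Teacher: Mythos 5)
Your proof is correct and rests on the same underlying idea as the paper's --- apply a Galois conjugation over $k$ to the eigen-equation so that the eigenvalue flips sign --- but the mechanism is genuinely different, and in fact more robust. The paper works in $k[\sqrt{\alpha},i]$ and applies ``complex conjugation,'' the automorphism fixing $\sqrt{\alpha}$ and sending $i\mapsto-i$: since this fixes $A$ (entries in $k\sqrt{\alpha}$) and negates both $i$ and $\sqrt{-\alpha}=i\sqrt{\alpha}$, it turns $A(x+\sqrt{-\alpha}y)=-i(x+\sqrt{-\alpha}y)$ directly into $A(x-\sqrt{-\alpha}y)=i(x-\sqrt{-\alpha}y)$. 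You instead replace $A$ by $iA$, whose entries are $k$-multiples of $\sqrt{-\alpha}$, so that the whole computation lives in the single quadratic extension $k[\sqrt{-\alpha}]$, and then apply $\sqrt{-\alpha}$-conjugation; your identity $\tau(iA)=-iA$ plays exactly the role that $\overline{A}=A$, $\overline{i}=-i$ plays in the paper. What your route buys is uniformity: when $i\in k$ (a genuine Type 4 situation --- e.g.\ the paper's own $\mathbb{F}_5$ example in its Type 2/Type 4 table), the paper's complex conjugation is the identity and its argument, read literally, says nothing, whereas your $\tau$ is still the nontrivial automorphism of $k[\sqrt{-\alpha}]=k[\sqrt{\alpha}]$ over $k$ and the computation goes through unchanged; moreover, viewing $E(A,\mp i)$ as $\ker(iA\mp I)$, the kernel of a matrix defined over $k[\sqrt{-\alpha}]$, makes your semilinear-bijection dimension count cleaner than the paper's. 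One caveat is shared by both proofs: the argument needs the conjugation to be nontrivial, i.e.\ $\sqrt{-\alpha}\notin k$. If $\sqrt{-\alpha}\in k$ then $\tau$ is the identity and $\tau(iA)=iA$, not $-iA$, so your claim of uniformity ``whether or not $i\in k$'' silently excludes this subcase; but there the decomposition $x+\sqrt{-\alpha}y$ with $x,y\in k^n$ is not even unique and the lemma as stated degenerates, so this is a defect of the statement (handled separately by the paper in its later $\sqrt{-\alpha}\in k$ analysis), not a gap in your proof relative to the paper's.
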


\begin{proof}

Suppose $x,y \in k^n$ such that $x+\sqrt{-\alpha}y \in E(A,-i)$. Then,
$$A(x+\sqrt{-\alpha}y) = -i(x+\sqrt{-\alpha}y)$$ which implies
$$Ax+\sqrt{-\alpha}Ay = \sqrt{\alpha}y-ix.$$
Then, complex conjugation tells us that 
$$Ax-\sqrt{-\alpha}Ay = \sqrt{\alpha}y+ix,$$ which tells us that
$$A(x-\sqrt{-\alpha}y) = i(x-\sqrt{-\alpha}y).$$ A similar argument shows that if $u,v \in k^n$ such that $u+\sqrt{-\alpha}v \in E(A,i)$. Then, $u-\sqrt{-\alpha}v \in E(A,-i)$.

Since  $x+\sqrt{-\alpha}y \in E(A,-i)$ implies $x-\sqrt{-\alpha}y \in E(A,i)$ and vice versa, then we see that $\dim(E(A,i))= \dim(E(A,-i))$. 
\end{proof}

\begin{lem}
\label{Type4EigenSo}
Suppose $\theta = \Inn_A$ is a Type 4 $k$-involution of $\So(n,k,\beta)$ where $A \in $ \newline$\oo(n,k[\sqrt{\alpha}],\beta)$. Then, we can find $x_1,...,x_{\frac{n}{2}}, y_1,...,y_{\frac{n}{2}} \in k^n$ such that the $x+\sqrt{-\alpha}y$ are a basis for $E(A,-i)$ and the $x-\sqrt{-\alpha}y$ are a basis for $E(A,i)$.
\end{lem}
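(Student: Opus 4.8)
The plan is to imitate the Type 3 eigenvector construction in Lemma \ref{Type3EigenSo}, the only new ingredient being that the entries of $A$ now live in $k[\sqrt{\alpha}]$ rather than in $k$; I would exploit this through the identity $\sqrt{-\alpha} = i\sqrt{\alpha}$, which converts the manifestly $k[i]$-valued vector $iAz$ into $\sqrt{-\alpha}$ times a vector over $k$.

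First I would record the spectral facts. Since $\Inn_A$ is Type 4 we have $A^2 = -I$, so the minimal polynomial of $A$ divides $t^2+1=(t-i)(t+i)$, which has distinct roots; hence $A$ is diagonalizable with every eigenvalue in $\{i,-i\}$, and so $\overline{k}^n = E(A,i)\oplus E(A,-i)$. By the preceding lemma $\dim E(A,i) = \dim E(A,-i)$, and since these sum to $n$, each equals $\frac{n}{2}$ (in particular $n$ is even).

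Next I would use the characterization of Type 4 matrices: every entry of $A$ is a $k$-multiple of $\sqrt{\alpha}$, so $A = \sqrt{\alpha}\,\tilde{A}$ for a matrix $\tilde{A}$ with entries in $k$. Fixing the square root so that $\sqrt{-\alpha} = i\sqrt{\alpha}$, I would take any basis $\{z_1,\dots,z_n\}$ of $k^n$ and set $u_j = z_j + iAz_j$. Exactly as in Lemma \ref{Type3EigenSo}, $Au_j = (A+iA^2)z_j = (A-iI)z_j = -i\,u_j$, so $u_j \in E(A,-i)$; and because $\frac{1}{2}(I+iA)$ is the projection onto $E(A,-i)$ while $u_j = (I+iA)z_j$, the vectors $u_1,\dots,u_n$ span $E(A,-i)$. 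The crucial rewriting is $u_j = z_j + i\sqrt{\alpha}\,\tilde{A}z_j = z_j + \sqrt{-\alpha}\,(\tilde{A}z_j)$, in which both $z_j$ and $\tilde{A}z_j$ lie in $k^n$; thus each spanning vector $u_j$ already has the desired shape $x + \sqrt{-\alpha}\,y$ with $x,y \in k^n$.

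Finally I would select $\frac{n}{2}$ of the $u_j$ forming a basis of $E(A,-i)$ and relabel them as $x_1+\sqrt{-\alpha}\,y_1,\dots,x_{\frac{n}{2}}+\sqrt{-\alpha}\,y_{\frac{n}{2}}$ with all $x_j,y_j\in k^n$. Applying the preceding lemma termwise gives $x_j-\sqrt{-\alpha}\,y_j \in E(A,i)$; these are the images of a basis under $\sqrt{-\alpha}$-conjugation, which is invertible, so they are linearly independent and hence a basis of the $\frac{n}{2}$-dimensional space $E(A,i)$. I do not expect a genuine obstacle here: the argument is routine once the factorization $A = \sqrt{\alpha}\,\tilde{A}$ is recognized, and the only point requiring care is the bookkeeping identity $\sqrt{-\alpha} = i\sqrt{\alpha}$ that keeps the real and imaginary parts over $k$.
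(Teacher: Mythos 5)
Your proposal is correct and follows essentially the same route as the paper's proof: both apply a scalar multiple of the projection onto $E(A,-i)$ (your $I+iA$ versus the paper's $\sqrt{\alpha}A-\sqrt{-\alpha}I$, which differ only by the factor $-i\sqrt{\alpha}$) to a $k$-basis $\{z_j\}$, observe via $A=\sqrt{\alpha}\,\tilde{A}$ that the resulting spanning vectors have the form $x+\sqrt{-\alpha}\,y$ with $x,y\in k^n$, extract a basis, and invoke the preceding conjugation lemma to obtain the basis of $E(A,i)$. The only cosmetic difference is your explicit identification of $\frac{1}{2}(I+iA)$ as the projection, which the paper leaves implicit in its spanning claim.
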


\begin{proof}

Since $\Inn_A$ is Type 4, then we are assuming that $A \in \oo(n,k[\sqrt{\alpha}], \beta)$ and $A^2 = -I$. Note that this also means that $n$ is even. It follows that all eigenvalues of $A$ are $\pm i$. Since there are no repeated roots in the minimal polynomial of $A$, then we see that $A$ is diagonalizable. We begin by constructing bases for $E(A,i)$ and $E(A,-i)$ such that all the basis vectors lie in $k[i]^n$.  Let $\{z_1,...,z_n\}$ be a basis for $k^n$. For each $j$, let $u_j = (\sqrt{\alpha}A-\sqrt{-\alpha}I)z_j.$ Note that 
\begin{align*}
Au_j &= A(\sqrt{\alpha}A-\sqrt{-\alpha}I)z_j\\
&= (\sqrt{\alpha}A^2-\sqrt{-\alpha}A)z_j\\ 
&= -i(\sqrt{\alpha}A-\sqrt{-\alpha}I)z_j\\
&= -iu_j.
\end{align*}
 So, $\{u_1,...,u_n\}$ must span $E(A,-i)$. Thus, we can appropriately choose $\frac{n}{2}$ of these vectors and form a basis for $E(A,-i)$. Note that each of these vectors lies in $k[i]^n$. Label these basis vectors as $v_1,...,v_\frac{n}{2}$. We can write each of these vectors as $v_j = x_j+\sqrt{-\alpha}y_j$. By the previous lemma, we know that $x_j-\sqrt{-\alpha}y_j \in E(A,i)$, and it follows that these will be linearly independent. Since there are $\frac{n}{2}$ of them, then they form a basis for $E(A,i)$.
\end{proof}

We are now able to prove results that characterize the matrices that induce Type 4 $k$-involutions, and then use these characterizations to find conditions on these $k$-involutions that are equivalent to isomorphy. We will have separate cases, depending on whether or not $\sqrt{-\alpha}$ lies in $k$. We begin by assuming that $\sqrt{-\alpha} \in k$. Since we are also assuming that $\sqrt{\alpha} \not \in k$, then it follows from these two assumptions that $\alpha$ and $-1$ lie in the same square class of $k$. Thus, we can assume in this case that $\alpha = -1$, which means $\sqrt{-\alpha} = 1$.

\begin{lem}
\label{Type4ClassYesSo}
Assume $\sqrt{-\alpha} \in k$ and suppose $\theta=\Inn_A$ is a Type 4 $k$-involution of $\So(n,k,\beta)$. Then, $A = X \left(\begin{smallmatrix}-iI_{\frac{n}{2}} &0 \\ 0& iI_{\frac{n}{2}}\end{smallmatrix}\right) X^{-1}$ for some $X \in \Gl(n,k),$ where $X^TMX = \left(\begin{smallmatrix}0 & X_1 \\ X_1 & 0\end{smallmatrix}\right)$, and $X_1$ is diagonal.
\end{lem}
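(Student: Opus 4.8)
The plan is to descend to a genuine involution over $k$ and then build a hyperbolic basis adapted to its two eigenspaces, mirroring the construction of Lemma~\ref{Type3ClassYesSo}. With the normalization $\alpha=-1$ already in force we have $\sqrt{\alpha}=i\notin k$, and since every entry of $A$ is a $k$-multiple of $\sqrt{\alpha}$ we may write $A=iB$ with $B\in\M(n,k)$. From $A^2=-I$ we obtain $B^2=I$, and from $A^TMA=M$ together with $A=iB$ we obtain $B^TMB=-M$; thus $B\in\Gl(n,k)$ is an involution with eigenvalues $\pm1$ in $k$, and $B$ is diagonalizable since $\chr(k)\ne2$. The key point is that a vector $v\in k^n$ satisfies $Av=-iv$ exactly when $Bv=-v$, so that $E(A,-i)$ and $E(A,i)$ possess bases lying in $k^n$, namely bases of $E(B,-1)$ and $E(B,1)$ respectively (compare Lemma~\ref{Type4EigenSo}).

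First I would record the geometry of $\beta$ on these eigenspaces. Because $\chr(k)\ne2$ and $B^TMB=-M$, any $u,v\in E(B,1)$ satisfy $\beta(u,v)=\beta(Bu,Bv)=-\beta(u,v)$, forcing $\beta(u,v)=0$; hence $E(B,1)$ is totally isotropic, and the same argument applies to $E(B,-1)$. Since $k^n=E(B,1)\oplus E(B,-1)$, nondegeneracy of $\beta$ then forces the induced pairing $E(B,-1)\times E(B,1)\to k$ to be perfect, so $\dim E(B,-1)=\dim E(B,1)=\frac{n}{2}$.

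Next I would run the recursion of Lemma~\ref{Type3ClassYesSo} to produce bases $a_1,\dots,a_{n/2}$ of $E(B,-1)$ and $b_1,\dots,b_{n/2}$ of $E(B,1)$ with $\beta(a_j,a_l)=\beta(b_j,b_l)=0$ and $\beta(a_j,b_l)\ne0$ if and only if $j=l$. Having chosen the first $j$ pairs, set $E_j=\Span_k(a_1,b_1,\dots,a_j,b_j)$ and $F_j=E_j^{\perp}$; one selects the next vector in $E(B,-1)$ by projecting a suitable element of $F_j$ via $\frac{1}{2}(I-B)$, and finds its partner in $E(B,1)$ using that $\beta|_{F_j}$ is nondegenerate. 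The step I expect to be the main obstacle to verify cleanly is that each $F_j$ is $B$-invariant, which is what keeps these projections inside the eigenspaces: from $B^{-1}=B$ and $B^TMB=-M$ one gets $\beta(Bv,w)=-\beta(v,Bw)$, so $B$-invariance of $E_j$ passes to its $\beta$-orthogonal complement $F_j$.

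Finally I would set $X=\left(\begin{smallmatrix}a_1&\cdots&a_{n/2}&b_1&\cdots&b_{n/2}\end{smallmatrix}\right)\in\Gl(n,k)$. Its first $\frac{n}{2}$ columns lie in $E(B,-1)=E(A,-i)$ and its last $\frac{n}{2}$ columns in $E(B,1)=E(A,i)$, so $X^{-1}AX=\left(\begin{smallmatrix}-iI_{\frac{n}{2}}&0\\0&iI_{\frac{n}{2}}\end{smallmatrix}\right)$ and hence $A=X\left(\begin{smallmatrix}-iI_{\frac{n}{2}}&0\\0&iI_{\frac{n}{2}}\end{smallmatrix}\right)X^{-1}$. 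The isotropy relations kill the diagonal blocks of $X^TMX$, while $\beta(a_j,b_l)=0$ for $j\ne l$ makes the off-diagonal blocks diagonal; symmetry of $M$ identifies the two off-diagonal blocks, giving $X^TMX=\left(\begin{smallmatrix}0&X_1\\X_1&0\end{smallmatrix}\right)$ with $X_1$ diagonal, as claimed.
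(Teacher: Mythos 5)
Your proof is correct, and its core --- the recursive construction of paired bases $a_1,\dots,a_{\frac{n}{2}}$ of $E(A,-i)\cap k^n$ and $b_1,\dots,b_{\frac{n}{2}}$ of $E(A,i)\cap k^n$ with $\beta(a_j,a_l)=\beta(b_j,b_l)=0$ and $\beta(a_j,b_l)\ne 0$ exactly when $j=l$, followed by assembling $X$ from these columns --- is exactly the paper's proof, which likewise mirrors Lemma~\ref{Type3ClassYesSo}. Where you genuinely depart from the paper is in the preliminaries: the paper invokes Lemma~\ref{Type4EigenSo} (and the $\sqrt{-\alpha}$-conjugation lemma) to obtain $k$-rational eigenbases and equal dimensions, whereas you descend to the $k$-rational matrix $B=-iA$, read off $B^2=I$ and $B^TMB=-M$, and then get rationality of the eigenspaces for free, total isotropy of $E(B,\pm 1)$ from $B^TMB=-M$, and $\dim E(B,\pm 1)=\frac{n}{2}$ from the perfect pairing forced by nondegeneracy. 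This repackaging buys something concrete: the identity $\beta(Bv,w)=-\beta(v,Bw)$ makes the $B$-invariance of each orthogonal complement $F_j$ a one-line check, and that is precisely the step the paper passes over silently when it asserts that $a_2=(\sqrt{\alpha}A-\sqrt{-\alpha}I)x$ lies in $F_1\cap E(A,-i)$ for $x\in F_1$; since your projection $\frac{1}{2}(I-B)$ is a scalar multiple of the paper's operator $\sqrt{\alpha}A-\sqrt{-\alpha}I$ (under the paper's square-root conventions and the normalization $\alpha=-1$), the two recursions coincide step by step, with yours supplying the justification the paper's ``it follows that'' omits.
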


\begin{proof}

We know from Lemma \ref{Type4EigenSo} that we have bases for $E(A,-i)$ and $E(A,I)$ that lie in $k^n$. We will show that we can in fact choose bases $a_1,...,a_{\frac{n}{2}}$ for $E(A,-i) \cap k^n$ and $b_1,...,b_{\frac{n}{2}}$ for $E(A,i) \cap k^n$ such that $\beta(a_j,a_l) = 0 = \beta(b_j,b_l)$ and $\beta(a_j, b_l)$ is nonzero if and only if $j = l$. We will build these bases recursively.

First, we know that we can choose some nonzero $a_1 \in E(A,-i) \cap k^n$. Then, since $\beta$ is non degenerate, we can choose a vector $t$ such that $\beta(a_1, t) \ne 0$. We note that $E(A,-i) \oplus E(A,i) = k^n$, so we can choose $t_{-i} \in E(A,-i)\cap k^n$ and $t_i \in E(A,i)\cap k^n$ such that $t = t_{-i}+t_i$. Since $\beta(a_1, t_{-i}) = 0$, then it follows that $\beta(a_1, t_i) \in k$ is nonzero. Let $b_1 = t_i$.

Let $E_1 = \Span_k(a_1,b_1)$ and let $F_1$ be the orthogonal complement of $E_1$ in $k^n$. Since the system of linear equations $$\beta(a_1,x) = 0$$ $$\beta(b_1,x) =0$$ has $n-2$ free variables, then we see that $F_1$ has dimension $n-2$. 

We now want to find $a_2 \in F_1 \cap E(A,-i)$. Similar to the construction in the previous lemma, we can choose $x \in F_1$, and let $a_2 = (\sqrt{\alpha}A-\sqrt{-\alpha}I)x$. It follows that $a_2 \in F_1 \cap E(A,-i)$. Now we want $b_2 \in F_2 \cap E(A,i)$ such that $\beta(a_2, b_2)$ is nonzero. Since $\beta|_{F_1}$ is non degenerate, then there exists some $y \in F_2$ such that $\beta(a_2,y) \ne 0$. Similar to the construction of $b_1$, we see that this implies the existence a vector $b_2$ that fits our criteria. 

Now, we let $E_2 = \Span_k(a_1,a_2,b_1,b_2)$ and let $F_2$ be the orthogonal complement of $E_2$ in $k^n$. We continue this same argument $\frac{n}{2}$ times, until we have the bases that we wanted to find. Let $$X= (a_1,...,a_{\frac{n}{2}}, b_1,...,b_{\frac{n}{2}}).$$ Then, the result follows.
\end{proof}

Here is an example of a Type 4 $k$-involution when $\sqrt{-\alpha} \in k$.

\begin{beisp}
Assume that $\beta$ is the standard dot product and that $k = \mathbb{F}_3$, the field of three elements. So, the square roots of 2 are $\pm i$. Observe that the matrix $$A =i\left(\begin{array}{cccc}0 & 0 & 1 & 1 \\0 & 0 & 1 & -1 \\2 & 2 & 0 & 0 \\2 & 1 & 0 & 0\end{array}\right) \in \oo(4, \mathbb{F}_3[i])$$ satisfies the relation $A^2 = -I_4$. Since each entry of $A$ is a $\mathbb{F}_3$-multiple of $i$, then it follows that $\Inn_A$ is an $k$-involution of $\So(4,\mathbb{F}_3)$ of Type 4. A basis for $E(A,-i)$ is formed by the vectors $$v_1 =  \left(\begin{array}{c}0\\ 0 \\0 \\1\end{array}\right)+ \left(\begin{array}{c}  1 \\ 2\\ 0 \\ 0\end{array}\right) =  \left(\begin{array}{c}  1 \\ 2\\ 0 \\ 1\end{array}\right)$$ and $$ v_2=\left(\begin{array}{c} 0 \\ 0 \\1 \\0\end{array}\right)+ \left(\begin{array}{c} 1\\ 1\\0 \\ 0 \end{array}\right) =  \left(\begin{array}{c}  1 \\ 1\\ 1 \\ 0\end{array}\right).$$ 

It can be shown that  $$v_3 =  \left(\begin{array}{c}0\\ 0 \\0 \\1\end{array}\right)- \left(\begin{array}{c}  1 \\ 2\\ 0 \\ 0\end{array}\right) =  \left(\begin{array}{c}  2 \\ 1\\ 0 \\ 1\end{array}\right)$$ and $$ v_2=\left(\begin{array}{c} 0 \\ 0 \\1 \\0\end{array}\right)- \left(\begin{array}{c} 1\\ 1\\0 \\ 0 \end{array}\right) =  \left(\begin{array}{c}  2 \\ 2\\ 1 \\ 0\end{array}\right)$$    are $i$-eigenvectors of $A$.

Following the notation of the previous lemma, we have $$X = \left(\begin{array}{cccc}0 & 0 & 1 & 1 \\0 & 0 & 2 &1 \\0 & 1 & 0 & 0 \\1 & 0 & 0 & 0\end{array}\right),$$ where $X^TX = \left(\begin{smallmatrix}1 & 0 & 0 & 0 \\0 & 1 & 0 & 0 \\0 & 0 & 2 & 0 \\0 & 0 & 0 & 2\end{smallmatrix}\right)$ and $A = -iX \left(\begin{smallmatrix}0 & 0 & 1 & 0 \\0 & 0 & 0 & 1 \\-i & 0 & 0 & 0 \\0 & -i & 0 & 0\end{smallmatrix}\right) X^{-1}$. We also note that $X_1 = I$.

\end{beisp}

Now we characterize the isomorphy classes of Type 4 $k$-involutions in the case where $\sqrt{-\alpha} \in k$.

\begin{theorem}
\label{type4lemYesSo}
Assume that $\sqrt{-\alpha} \in k$. Then, if $\Inn_A$ and $\Inn_B$ are both Type 4 $k$-involutions of $\So(n,k,\beta)$ where the entries of $A$ and $B$ are $k$-multiples of $\sqrt{\alpha}$, then $\Inn_A$ and $\Inn_B$ are isomorphic over $\oo(n,k,\beta)$.
\end{theorem}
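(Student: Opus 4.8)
The plan is to mirror the argument of Theorem \ref{type3lemYesSo}, since Lemma \ref{Type4ClassYesSo} produces for a Type 4 $k$-involution (when $\sqrt{-\alpha}\in k$, so that we may take $\alpha=-1$ and $i=\sqrt{\alpha}\notin k$) exactly the same normal form that Lemma \ref{Type3ClassYesSo} produced in the Type 3 case with $i\in k$. Concretely, writing $J=\left(\begin{smallmatrix}-iI_{\frac n2}&0\\0&iI_{\frac n2}\end{smallmatrix}\right)$, I would first apply Lemma \ref{Type4ClassYesSo} to both involutions to obtain $X,Y\in\Gl(n,k)$ with $A=XJX^{-1}$, $B=YJY^{-1}$, and $X^TMX=\left(\begin{smallmatrix}0&X_1\\X_1&0\end{smallmatrix}\right)$, $Y^TMY=\left(\begin{smallmatrix}0&Y_1\\Y_1&0\end{smallmatrix}\right)$ with $X_1,Y_1$ invertible diagonal matrices. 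Note that although $J$ has entries in $k[i]$, the matrices $X$ and $Y$ themselves lie in $\Gl(n,k)$.

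The heart of the proof is the construction of a single conjugating matrix $Q\in\oo(n,k,\beta)$. Because $X_1$ and $Y_1$ are invertible diagonal matrices over $k$, I can choose $R_1,R_2\in\Gl(\frac n2,k)$ with $Y_1=R_1^TX_1R_2$ (for instance $R_1=I$ and $R_2=X_1^{-1}Y_1$). Setting $R=\left(\begin{smallmatrix}R_1&0\\0&R_2\end{smallmatrix}\right)$ and $Q=XRY^{-1}$, a direct block computation gives $R^T(X^TMX)R=\left(\begin{smallmatrix}0&R_1^TX_1R_2\\R_2^TX_1R_1&0\end{smallmatrix}\right)=\left(\begin{smallmatrix}0&Y_1\\Y_1&0\end{smallmatrix}\right)=Y^TMY$, where the lower-left block also matches since $Y_1$ and $X_1$ are diagonal, hence symmetric. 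From this, exactly as in Theorem \ref{type3lemYesSo}, one computes $Q^TMQ=(Y^{-1})^T(R^TX^TMXR)Y^{-1}=(Y^{-1})^T(Y^TMY)Y^{-1}=M$, so $Q\in\oo(n,k,\beta)$; crucially $Q\in\Gl(n,k)$ because $X$, $R$, and $Y^{-1}$ all have entries in $k$.

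To finish, I would verify $Q^{-1}AQ=B$. Since $R$ is block diagonal it commutes with the block-scalar matrix $J$, so $Q^{-1}AQ=YR^{-1}(X^{-1}AX)RY^{-1}=YR^{-1}JRY^{-1}=YR^{-1}RJY^{-1}=YJY^{-1}=B$. By Lemma \ref{TidyLem} this shows that $\Inn_A$ and $\Inn_B$ are isomorphic over $\oo(n,k,\beta)$, as desired.

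I do not expect a genuine obstacle here: the only point requiring care is that $Q$ must have entries in $k$ rather than in $k[i]$, and this is automatic because the non-real matrix $J$ is conjugated away on both sides of $Q^{-1}AQ$. The structural reason that \emph{every} such Type 4 involution is isomorphic is that the associated congruence form $\left(\begin{smallmatrix}0&X_1\\X_1&0\end{smallmatrix}\right)$ is hyperbolic, so the two free blocks $R_1,R_2$ always suffice to match $X^TMX$ to $Y^TMY$ regardless of $X_1$ and $Y_1$ --- in contrast to the Type 1 case, where the individual diagonal blocks $X_1,X_2$ carry a genuine congruence invariant.
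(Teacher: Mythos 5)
Your proposal is correct and follows essentially the same route as the paper's own proof: apply Lemma \ref{Type4ClassYesSo} to both involutions, take $R = \left(\begin{smallmatrix}R_1 & 0 \\ 0 & R_2\end{smallmatrix}\right)$ with $Y_1 = R_1^TX_1R_2$, set $Q = XRY^{-1}$, and verify $Q^TMQ = M$ and $Q^{-1}AQ = B$ before invoking Lemma \ref{TidyLem}. In fact you supply one detail the paper leaves implicit, namely that the lower-left block $R_2^TX_1R_1$ also equals $Y_1$ because the diagonal blocks are symmetric.
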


\begin{proof}

Suppose we have two such $k$-involutions of $\So(n,k,\beta)$. Let them be represented by matrices $A,B \in \oo(n,k,\beta)$. By Lemma \ref{Type4ClassYesSo}, we can choose $X, Y \in \Gl(n,k)$ such that $$X^{-1}AX = \left(\begin{array}{cc}-iI & 0 \\0 & iI\end{array}\right) = Y^{-1}BY,$$ $$X^TMX = \left(\begin{array}{cc}0 & X_1\\ X_1 & 0\end{array}\right),$$ and  $$Y^TMY = \left(\begin{array}{cc}0 & Y_1\\ Y_1 & 0\end{array}\right),$$ where $X_1$ and $Y_1$ are diagonal.

Since $X_1$ and $Y_1$ are both invertible diagonal matrices, then we can choose $R_1$ and $R_2 \in \Gl(\frac{n}{2},k)$ such that $Y_1 = R_1^TX_1R_2$. Let $R = \left(\begin{smallmatrix}R_1 & 0 \\0 & R_2\end{smallmatrix}\right)$ and $Q = XRY^{-1}$. It follows from this that $R^TX^TMXR = Y^TMY$. We will show that $Q \in \oo(n,k,\beta)$ and $Q^{-1}AQ = B$. This will then prove that $\Inn_A$ and $\Inn_B$ lie in the same isomorphy class by Lemma \ref{TidyLem}. 

First we show that $Q \in \oo(n,k,\beta)$. By construction, the entries of $Q$ lie in $k$. Also, note that 
\begin{align*}
Q^TMQ &= (XRY^{-1})^TM(XRY^{-1})\\ 
&= (Y^{-1})^TR^T(X^TMX)RY^{-1}\\
&= (Y^{-1})^T(Y^TMY)Y^{-1}\\ 
&= M,
\end{align*} 
which proves  $Q \in \oo(n,k,\beta)$. 

Lastly, we show that $Q^{-1}AQ = B$. We first note that $R$ and $\left(\begin{smallmatrix}-iI & 0 \\0 & iI\end{smallmatrix}\right)$ commute. Then, we see that
\begin{align*}
Q^{-1}AQ &= (XRY^{-1})^{-1}A (XRY^{-1})\\
&= YR^{-1}(X^{-1}AX)RY^{-1}\\
&= Y R^{-1}\left(\begin{array}{cc}-iI & 0 \\0 & iI\end{array}\right)R Y^{-1}\\ 
&= Y R^{-1}R\left(\begin{array}{cc}-iI & 0 \\0 & iI\end{array}\right) Y^{-1}\\
&= Y \left(\begin{array}{cc}-iI & 0 \\0 & iI\end{array}\right) Y^{-1}\\
&= B.
\end{align*}
\end{proof}

We now examine the case where $\sqrt{-\alpha} \not \in k$.

\begin{lem}
\label{Type4ClassNoSo}
Assume $\sqrt{-\alpha} \not \in k$ and suppose $\theta=\Inn_A$ is a Type 4 $k$-involution of $\So(n,k,\beta)$. Then, $A = -\frac{\sqrt{\alpha}}{\alpha} U \left(\begin{smallmatrix}0 &  I_{\frac{n}{2}} \\ -\alpha I_{\frac{n}{2}} & 0\end{smallmatrix}\right) U^{-1}$ for $$U  = \left(\begin{array}{cccccccccc}a_1 & a_2 & \cdots & a_\frac{n}{2} &b_1 & b_2 & \cdots & b_\frac{n}{2}  \end{array}\right)\in \Gl(n,k),$$ where the $a_j+\sqrt{-\alpha}b_j$ are a basis for $E(A,-i)$, the $a_j-\sqrt{-\alpha}b_j$ are a basis for $E(A,i)$, and $U^TMU = \left(\begin{smallmatrix}U_1 & 0 \\0 & \frac{1}{\alpha}U_1\end{smallmatrix}\right)$ is diagonal.
\end{lem}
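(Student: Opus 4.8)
The plan is to follow the proof of Lemma~\ref{Type3ClassNoSo}, replacing $i$ by $\sqrt{-\alpha}$ and carrying the factor $\sqrt{\alpha}$ that is forced by the entries of $A$. The key observation is that, since every entry of $A$ is a $k$-multiple of $\sqrt{\alpha}$, for any $a\in k^n$ the vector $b:=\frac{1}{\sqrt{\alpha}}Aa$ again lies in $k^n$, and then
\begin{align*}
Aa &= \sqrt{\alpha}\,b, & Ab &= \frac{1}{\sqrt{\alpha}}A^2a = -\frac{\sqrt{\alpha}}{\alpha}\,a,
\end{align*}
using $A^2=-I$. One checks $A(a+\sqrt{-\alpha}b)=-i(a+\sqrt{-\alpha}b)$ (fixing the sign of $\sqrt{-\alpha}$ compatibly with the labeling of $E(A,\pm i)$), so $a+\sqrt{-\alpha}b\in E(A,-i)$ and its $\sqrt{-\alpha}$-conjugate lies in $E(A,i)$. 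Consequently, once I have real vectors $a_1,\dots,a_{\frac{n}{2}}$ yielding an orthogonal system, I set $b_j=\frac{1}{\sqrt{\alpha}}Aa_j$, put $U=(a_1,\dots,a_{\frac{n}{2}},b_1,\dots,b_{\frac{n}{2}})$, and read the columns of $AU$ off the two identities; comparing with the columns of $-\frac{\sqrt{\alpha}}{\alpha}U\left(\begin{smallmatrix}0 & I_{\frac{n}{2}}\\ -\alpha I_{\frac{n}{2}} & 0\end{smallmatrix}\right)$ gives the asserted form of $A$ immediately. So the real work is to choose the $a_j$ so that the resulting system is orthogonal.

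To choose the $a_j$ I would proceed recursively inside $k^n$. Since $M$ is invertible and $\chr(k)\neq2$, there is an anisotropic $a_1\in k^n$, that is $\beta(a_1,a_1)\neq0$ (the explicit construction in Lemma~\ref{Type3ClassNoSo} applies verbatim). Set $b_1=\frac{1}{\sqrt{\alpha}}Aa_1$, let $E_1=\Span_k(a_1,b_1)$, and let $F_1$ be its orthogonal complement in $k^n$. Inductively, given $a_1,b_1,\dots,a_j,b_j$ spanning a $\beta$-nondegenerate $E_j$ with orthogonal complement $F_j$, I pick an anisotropic $a_{j+1}\in F_j$ (available since $\beta|_{F_j}$ is nondegenerate over $k$ and $\chr(k)\neq2$) and set $b_{j+1}=\frac{1}{\sqrt{\alpha}}Aa_{j+1}$. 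The one point requiring verification is that $b_{j+1}\in F_j$ as well; using $\beta(Au,w)=\beta(u,A^{-1}w)$ and $A^{-1}=-A$, for $l\le j$ one computes
\begin{align*}
\beta(b_{j+1},a_l) &= -\frac{\sqrt{\alpha}}{\alpha}\beta(a_{j+1},Aa_l) = -\beta(a_{j+1},b_l) = 0,\\
\beta(b_{j+1},b_l) &= -\frac{\sqrt{\alpha}}{\alpha}\beta(a_{j+1},Ab_l) = \frac{1}{\alpha}\beta(a_{j+1},a_l) = 0,
\end{align*}
both vanishing since $a_{j+1}\in F_j$. Hence all cross terms between distinct indices vanish by construction, the system stays inside $k^n$, and the recursion runs exactly $\frac{n}{2}$ steps, at which point $F_{n/2}=0$.

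Finally I would read off the same-index Gram entries directly from orthogonality of $A$: from $\beta(Aa_j,Ab_j)=\beta(a_j,b_j)$ one gets $\beta(a_j,b_j)=-\beta(a_j,b_j)$, hence $\beta(a_j,b_j)=0$, and from $\beta(Aa_j,Aa_j)=\beta(a_j,a_j)$ one gets $\alpha\,\beta(b_j,b_j)=\beta(a_j,a_j)$, hence $\beta(b_j,b_j)=\frac{1}{\alpha}\beta(a_j,a_j)$. Since $\beta(a_j,a_j)\neq0$, each block $\Span_k(a_j,b_j)$ is $\beta$-nondegenerate, which both licenses the recursion and gives $U\in\Gl(n,k)$; the $\frac{n}{2}$ vectors $a_j+\sqrt{-\alpha}b_j$ are then linearly independent and, as $\dim E(A,-i)=\frac{n}{2}$ by the preceding lemma, form a basis of $E(A,-i)$, with $\sqrt{-\alpha}$-conjugates basing $E(A,i)$. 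Writing $U_1=\diag(\beta(a_1,a_1),\dots,\beta(a_{\frac{n}{2}},a_{\frac{n}{2}}))$ and assembling the three families of relations yields $U^TMU=\left(\begin{smallmatrix}U_1 & 0\\ 0 & \frac{1}{\alpha}U_1\end{smallmatrix}\right)$ with $U_1$ diagonal, as claimed. I expect the main obstacle to be purely the field bookkeeping: $A$ has entries in $k[\sqrt{\alpha}]$ whereas the eigenvectors live in $k[\sqrt{-\alpha}]^n$, so one must verify at each step that $a_j$ and $b_j$ are honest vectors over $k$ — which is exactly what the normalization $b_j=\frac{1}{\sqrt{\alpha}}Aa_j$ secures — so that $U$ lands in $\Gl(n,k)$ rather than in a quadratic extension.
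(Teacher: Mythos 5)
Your proposal is correct and follows essentially the same route as the paper: the same normalization $b_j=\frac{1}{\sqrt{\alpha}}Aa_j$ to keep all vectors in $k^n$, the same recursive choice of anisotropic vectors in successive orthogonal complements, and the same reading-off of the block form of $A$ and of $U^TMU$. If anything, you are slightly more careful than the paper at one point: you verify explicitly, via $\beta(Au,w)=\beta(u,A^{-1}w)$ and $A^{-1}=-A$, that $b_{j+1}$ lands back in $F_j$, whereas the paper leaves the $A$-invariance of the orthogonal complement implicit.
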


\begin{proof}

We know from Lemma \ref{Type4EigenSo} that we have bases for $E(A,-i)$ and $E(A,I)$ that lie in $k[\sqrt{-\alpha}]^n$. We will show that we can in fact choose bases $a_1+\sqrt{-\alpha}b_1,...,a_{\frac{n}{2}}+\sqrt{-\alpha}b_{\frac{n}{2}}$ for $E(A,-i) \cap k[i]^n$ and $a_1-\sqrt{-\alpha}b_1,...,a_{\frac{n}{2}}-\sqrt{-\alpha}b_{\frac{n}{2}}$ for $E(A,i) \cap k[\sqrt{-\alpha}]^n$ such that $\beta(a_j+\sqrt{-\alpha}b_j, a_l-\sqrt{-\alpha}b_l)$ is nonzero if and only if $j=l$. From this, we will be able to show that $\beta(a_j,a_l) = 0 = \beta(b_j,b_l)$ when $j \ne l$ and $\beta(a_j, b_l) = 0$ for all $j$ and $l$. We will build these bases recursively.

Given any vector $x \in k^n$, we know that $x+iAx \in E(A,-i)$. We want to choose $x\in k^n$ such that $\beta(x,x) \ne 0$. (The reasons for this will become apparent.) $M$ is an invertible matrix, so there are at least $n$ instances of $e_j^TMe_l \ne 0$. If there is an instance where $j=l$, let $x=e_j$. If instead we have  $e_j^TMe_l = 0 = e_l^TMe_j$, then let $x = e_j+e_l$. We note that this works because $$\beta(x,x) = \beta(e_j+e_l, e_j+e_l) = 2 \beta(e_j,e_l) \ne 0.$$ 

So, we have $x \in k^n$ such that $\beta(x,x) \ne 0$, and we have $x+iAx \in E(A,-i)$. Let $a_1 = x$ and $b_1 = \frac{1}{\sqrt{\alpha}}Ax$. So, $a_1+\sqrt{-\alpha}b_1 \in E(A,-i)$ and $a_1-\sqrt{-\alpha}b_1 \in E(A,i)$.  From this, it follows that 
\begin{align*}
\beta(a_1+\sqrt{-\alpha}b_1, a_1-\sqrt{-\alpha}b_1) &= (\beta(a_1,a_1) +\alpha \beta(b_1,b_1))+\sqrt{-\alpha}(-\beta(a_1,b_1)+\beta(a_1,b_1)\\
&= \beta(x,x) +\alpha \beta\left( \frac{1}{\sqrt{\alpha}}Ax,\frac{1}{\sqrt{\alpha}}Ax\right)\\ 
&= 2\beta(x,x)\\ 
&\ne 0.
\end{align*}

Let $E_1 = \Span_{k[\sqrt{-\alpha}]}(a_1+\sqrt{-\alpha}b_1, a_1-\sqrt{-\alpha}b_1) = \Span_{k[\sqrt{-\alpha}]}(a_1,b_1)$, and let $F_1$ be the orthogonal complement of $E_1$ over $k[\sqrt{-\alpha}]$. $F_1$ has dimension $n-2$, and $\beta|_{F_1}$ is nondegenerate. So, we can find a nonzero vector $x \in F_1 \cap k^n$ such that $\beta|_{F_1}(x,x) = 0$. So, as in the last case, let $a_2 = x$ and $b_2 = \frac{1}{\sqrt{\alpha}}Ax$. As before, we have $\beta(a_2+\sqrt{-\alpha}b_2, a_2-\sqrt{-\alpha}b_2) \ne 0$.

Let $E_2 = \Span_{k[\sqrt{-\alpha}]}(a_1,a_2,b_1,b_2)$, and let $F_2$ be the orthogonal complement of $E_2$ over $k[\sqrt{-\alpha}]$. In this manner, we can create the bases that we noted in the opening paragraph of this proof. 

Note that we always have $$0 = \beta(a_j+\sqrt{-\alpha}b_j, a_l+\sqrt{-\alpha}b_l) = (\beta(a_j,a_l)-\alpha \beta(b_j,b_l))+\sqrt{-\alpha}(\beta(a_j,b_l)+\beta(b_j,a_l)),$$ and when $j \ne l$ we have $$0 = \beta(a_j+\sqrt{-\alpha}b_j, a_l-\sqrt{-\alpha}b_l) = (\beta(a_j,a_l)+\alpha\beta(b_j,b_l))+\sqrt{-\alpha}(-\beta(a_j,b_l)+\beta(b_j,a_l)).$$

This tells us that when $j \ne l$ that $$\beta(a_j,b_l) = \beta(a_j,a_l) = \beta(b_j,b_l) = 0.$$ When $j = l$, we see that $\beta(b_j,b_j) = \frac{1}{\alpha}\beta(a_j,a_j)$ and that $\beta(a_j, b_j)= -\beta(b_j,a_j)$. The last of these shows that $\beta(a_j,b_l) = 0$, regardless of the values of $j$ and $l$.

Let $$U = (a_1,...,a_{\frac{n}{2}},b_1,...,b_{\frac{n}{2}}).$$ Then, it follows that $U^TMU = \left(\begin{smallmatrix}U_1 & 0 \\0 & \frac{1}{\alpha}U_1\end{smallmatrix}\right)$ where $U_1$ is a diagonal $\frac{n}{2} \times \frac{n}{2}$ matrix.

Lastly, since $b_j = \frac{1}{\sqrt{\alpha}}Aa_j$, then it follows that $Ab_j = -\frac{1}{\sqrt{\alpha}}a_j$. So, we have that  $A = -\frac{\sqrt{\alpha}}{\alpha}U \left(\begin{smallmatrix}0 & I_{\frac{n}{2}} \\ -\alpha I_{\frac{n}{2}} & 0\end{smallmatrix}\right) U^{-1}$.
\end{proof}

Here is an example of a Type 4 $k$-involution in the case that $\sqrt{-\alpha} \not \in k$.

\begin{beisp}
Assume that $\beta$ is the standard dot product. Observe that the matrix $$A = \frac{\sqrt{2}}{2}\left(\begin{array}{cccc}0 & 0 & 1 & 1 \\0 & 0 & 1 & -1 \\-1 & -1 & 0 & 0 \\-1 & 1 & 0 & 0\end{array}\right) \in \oo(4, \mathbb{Q}[\sqrt{2}])$$ is such that $A^2 = -I_4$. Since each entry of $A$ is a $\mathbb{Q}$-multiple of $\sqrt{2}$, then it follows that $\Inn_A$ is an $k$-involution of $\So(4,\mathbb{Q})$ of Type 4. It can be shown that $E(A,-i)$ has dimension 2. A basis for this subspace is formed by the vectors $$v_1 =  \left(\begin{array}{c}0\\ 0 \\0 \\1\end{array}\right)+ \sqrt{-2} \left(\begin{array}{c}  -\frac{1}{2} \\ \frac{1}{2}\\ 0 \\ 0\end{array}\right)$$ and $$ v_2=\left(\begin{array}{c} 0 \\ 0 \\1 \\0\end{array}\right)+ \sqrt{-2} \left(\begin{array}{c} -\frac{1}{2} \\ -\frac{1}{2}\\0 \\ 0 \end{array}\right).$$ 

It can be shown that$$v_3 =  \left(\begin{array}{c}0\\ 0 \\0 \\1\end{array}\right)- \sqrt{-2} \left(\begin{array}{c}  -\frac{1}{2} \\ \frac{1}{2}\\ 0 \\ 0\end{array}\right)$$ and $$ v_4=\left(\begin{array}{c} 0 \\ 0 \\1 \\0\end{array}\right)- \sqrt{-2} \left(\begin{array}{c} -\frac{1}{2} \\ -\frac{1}{2}\\0 \\ 0 \end{array}\right)$$   are $i$-eigenvectors of $A$, where these are the conjugates of $v_1$ and $v_2$, respectively.  

Following the notation of the previous lemma, we have $$U = \left(\begin{array}{cccc}0 & 0 & -\frac{1}{2} & -\frac{1}{2} \\0 & 0 & \frac{1}{2} &- \frac{1}{2} \\0 & 1 & 0 & 0 \\1 & 0 & 0 & 0\end{array}\right),$$ where $U^TU = \left(\begin{smallmatrix}1 & 0 & 0 & 0 \\0 & 1 & 0 & 0 \\0 & 0 & \frac{1}{2} & 0 \\0 & 0 & 0 & \frac{1}{2}\end{smallmatrix}\right)$ and $A = -\frac{\sqrt{2}}{2}U \left(\begin{smallmatrix}0 & 0 & 1 & 0 \\0 & 0 & 0 & 1 \\-\sqrt{2} & 0 & 0 & 0 \\0 & -\sqrt{2} & 0 & 0\end{smallmatrix}\right) U^{-1}$ . We also note that $U_1 = I$.
\end{beisp}

We now find conditions on Type 4 $k$-involutions that are equivalent to isomorphy in the case where $\sqrt{-\alpha} \not \in k$.

\begin{theorem}
\label{type4lemNoSo}
Assume $\sqrt{-\alpha} \not \in k$. Then, if $\Inn_A$ and $\Inn_B$ are both Type 4 $k$-involutions of $\So(n,k,\beta)$ where $A, B \in \oo(n,k[\sqrt{\alpha}], \beta)$, then $\Inn_A$ and $\Inn_B$ are isomorphic over $\oo(n,k,\beta)$.
\end{theorem}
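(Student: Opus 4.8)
The plan is to mirror the proof of Theorem \ref{type3lemNoSo}, with the role of $i$ and the extension $k[i]$ replaced by $\sqrt{-\alpha}$ and the extension $K=k[\sqrt{-\alpha}]$, reducing everything to the already-settled ``$\sqrt{-\alpha}$ in the base field'' case of Theorem \ref{type4lemYesSo}. First I would apply Lemma \ref{Type4ClassNoSo} to write $A=-\frac{\sqrt{\alpha}}{\alpha}\,U\left(\begin{smallmatrix}0 & I_{\frac{n}{2}} \\ -\alpha I_{\frac{n}{2}} & 0\end{smallmatrix}\right)U^{-1}$ with $U\in\Gl(n,k)$ whose columns $a_1,\dots,a_{\frac{n}{2}},b_1,\dots,b_{\frac{n}{2}}\in k^n$ give the basis $a_j+\sqrt{-\alpha}\,b_j$ of $E(A,-i)$, and with $U^TMU=\left(\begin{smallmatrix}U_1 & 0 \\ 0 & \frac{1}{\alpha}U_1\end{smallmatrix}\right)$ diagonal; similarly write $B$ via a matrix $W$ with $W^TMW=\left(\begin{smallmatrix}W_1 & 0 \\ 0 & \frac{1}{\alpha}W_1\end{smallmatrix}\right)$.

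Next I pass to $K=k[\sqrt{-\alpha}]$, where $\sqrt{-\alpha}\in K$. Setting $X=(a_1+\sqrt{-\alpha}\,b_1,\dots,a_{\frac{n}{2}}+\sqrt{-\alpha}\,b_{\frac{n}{2}},a_1-\sqrt{-\alpha}\,b_1,\dots,a_{\frac{n}{2}}-\sqrt{-\alpha}\,b_{\frac{n}{2}})$, a short computation using the orthogonality relations $\beta(a_j,a_l)=\beta(b_j,b_l)=0$ ($j\ne l$), $\beta(a_j,b_l)=0$, and $\beta(b_j,b_j)=\frac{1}{\alpha}\beta(a_j,a_j)$ from Lemma \ref{Type4ClassNoSo} shows $X^{-1}AX=\left(\begin{smallmatrix}-iI_{\frac{n}{2}} & 0 \\ 0 & iI_{\frac{n}{2}}\end{smallmatrix}\right)$ and $X^TMX=\left(\begin{smallmatrix}0 & X_1 \\ X_1 & 0\end{smallmatrix}\right)$ with $X_1=2U_1$. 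Thus, over $K$, the matrix $X$ is exactly of the form produced by Lemma \ref{Type4ClassYesSo} for $\So(n,K,\beta)$, so $\Inn_A$ and $\Inn_B$ fall under Theorem \ref{type4lemYesSo} (or, in the degenerate event $i\in k$ where $\sqrt{\alpha}=\sqrt{-\alpha}/i\in K$ and hence $A,B\in\oo(n,K,\beta)$ already, under Theorem \ref{type3lemYesSo}; the conclusion is identical). Either way I obtain $Q_\ast\in\oo(n,K,\beta)$ with $Q_\ast^{-1}AQ_\ast=B$.

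The remaining and hardest step is descent from $K$ back to $k$. I would set $Y=Q_\ast^{-1}X$ and check, using $Q_\ast^TMQ_\ast=M$, that $Y^{-1}BY=\left(\begin{smallmatrix}-iI_{\frac{n}{2}} & 0 \\ 0 & iI_{\frac{n}{2}}\end{smallmatrix}\right)$ and $Y^TMY=\left(\begin{smallmatrix}0 & 2U_1 \\ 2U_1 & 0\end{smallmatrix}\right)$. The delicate point is to arrange that the first and second halves of the columns of $Y$ are $\sqrt{-\alpha}$-conjugates of one another. Since $B$ is fixed by $\sqrt{-\alpha}$-conjugation while that conjugation interchanges $E(B,-i)$ and $E(B,i)$, the conjugate of the first block of columns is another basis of $E(B,i)$, so it differs from the current second block by an invertible factor in $\Gl(\frac{n}{2},K)$; correcting $Y$ by a suitable block-diagonal element of $\Gl(\frac{n}{2},K)\times\Gl(\frac{n}{2},K)$ restores the conjugate-pairing while keeping the eigenspace block form and the hyperbolic inner-product form $\left(\begin{smallmatrix}0 & 2U_1 \\ 2U_1 & 0\end{smallmatrix}\right)$. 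Writing the paired columns as $c_j\pm\sqrt{-\alpha}\,d_j$ with $c_j,d_j\in k^n$ then produces a genuinely $k$-rational $V=(c_1,\dots,c_{\frac{n}{2}},d_1,\dots,d_{\frac{n}{2}})\in\Gl(n,k)$ with $B=-\frac{\sqrt{\alpha}}{\alpha}\,V\left(\begin{smallmatrix}0 & I_{\frac{n}{2}} \\ -\alpha I_{\frac{n}{2}} & 0\end{smallmatrix}\right)V^{-1}$ and $V^TMV=\left(\begin{smallmatrix}U_1 & 0 \\ 0 & \frac{1}{\alpha}U_1\end{smallmatrix}\right)=U^TMU$.

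Finally I would set $Q=UV^{-1}\in\Gl(n,k)$. Then $Q^TMQ=(V^{-1})^T(U^TMU)V^{-1}=(V^{-1})^T(V^TMV)V^{-1}=M$, so $Q\in\oo(n,k,\beta)$, and $Q^{-1}AQ=VU^{-1}AUV^{-1}=-\frac{\sqrt{\alpha}}{\alpha}\,V\left(\begin{smallmatrix}0 & I_{\frac{n}{2}} \\ -\alpha I_{\frac{n}{2}} & 0\end{smallmatrix}\right)V^{-1}=B$. By Lemma \ref{TidyLem} this shows $\Inn_A$ and $\Inn_B$ are isomorphic over $\oo(n,k,\beta)$. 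I expect the genuine obstacle to be the conjugate-pairing adjustment in the descent: verifying that one can simultaneously restore the eigenspace block structure and the hyperbolic form while forcing the columns to be $\sqrt{-\alpha}$-symmetric, so that the extracted $V$ is $k$-rational and satisfies $V^TMV=U^TMU$ exactly, which is what makes $Q=UV^{-1}$ both $k$-rational and orthogonal.
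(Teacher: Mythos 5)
Your strategy is the paper's own, step for step: reduce over $K=k[\sqrt{-\alpha}]$ to Theorem \ref{type4lemYesSo} (or to the Type 3 results via Corollary \ref{CorType3So} when $i\in k$), obtain $Q_*\in\oo(n,K,\beta)$ with $Q_*^{-1}AQ_*=B$, pass to $Y=Q_*^{-1}X$, extract a $k$-rational $V$ from conjugate-paired columns, and finish with $Q=UV^{-1}$ and Lemma \ref{TidyLem}. The one place you diverge is the conjugate-pairing of the columns of $Y$, and you are right that this is the crux: the paper disposes of it with the sentence ``since $Y$ was obtained from $X$ via row operations, the $j$th and $(\frac{n}{2}+j)$th columns are conjugates of one another,'' which is an assertion, not a proof. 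Writing $\sigma$ for the nontrivial element of $\gal(K/k)$, extended to the compositum $k[\sqrt{\alpha},\sqrt{-\alpha}]$ so that it fixes $\sqrt{\alpha}$ (this extension is also what makes your statement that $B$ is fixed by the conjugation correct, since $B$ has entries in $k[\sqrt{\alpha}]$, not in $K$), one has $\sigma(Q_*^{-1}x)=\sigma(Q_*^{-1})\sigma(x)$, and this agrees with $Q_*^{-1}\sigma(x)$ on all columns $x$ of $X$ if and only if $\sigma(Q_*)=Q_*$, i.e.\ if and only if $Q_*$ is already $k$-rational --- which is essentially what is to be proved. So you have correctly located a step that the paper's own proof leaves unjustified.

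The gap is that your repair does not get past this point. Let $W$ and $Z$ denote the first and second blocks of $\frac{n}{2}$ columns of $Y$, so that $W^TMW=Z^TMZ=0$ and $W^TMZ=2U_1$. Since $\sigma(W)$ is another $K$-basis of $E(B,i)\cap K^n$, there is $P\in\Gl(\frac{n}{2},K)$ with $\sigma(W)=ZP$, and the correction $Y\mapsto Y\left(\begin{smallmatrix}I&0\\0&P\end{smallmatrix}\right)$ does restore the pairing; but it changes the Gram matrix to $\left(\begin{smallmatrix}0&2U_1P\\ \sigma(P)^T2U_1&0\end{smallmatrix}\right)$, not the required $\left(\begin{smallmatrix}0&2U_1\\2U_1&0\end{smallmatrix}\right)$. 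The only remaining pairing-preserving freedom is to replace $W$ by $WS$ and $\sigma(W)$ by $\sigma(WS)$ with $S\in\Gl(\frac{n}{2},K)$, which turns the off-diagonal block into $S^T(2U_1P)\sigma(S)$. Hence ``restoring the pairing while keeping the hyperbolic form'' is exactly the claim that the $\sigma$-hermitian matrices $2U_1P$ and $2U_1$ are congruent in the hermitian sense over $K$, and this claim has honest obstructions: taking determinants shows it forces $\det(P)\in N_{K/k}(K^*)$, which nothing you (or the paper) established guarantees. This is the true content of the descent, and it is missing from both arguments; changing $Q_*$ by an element of the centralizer of $A$ in $\oo(n,K,\beta)$ only alters $2U_1P$ by a hermitian congruence, so the problem cannot be evaded by choosing $Q_*$ more cleverly. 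It can, however, be solved: the form $h_B(u,v)=\beta(u,\sigma(v))$ on $E(B,-i)\cap K^n$ is $\sigma$-hermitian with Gram matrix $2U_1P$ in the basis $W$, the analogous form $h_A$ for $A$ has Gram matrix $2U_1$ in the basis coming from $X$, and a direct computation (using that the entries of $A$ and $B$ are $k$-multiples of $\sqrt{\alpha}$) shows that both have trace form isometric to the quadratic form $z\mapsto 4\alpha\beta(z,z)$ on $k^n$; by Jacobson's theorem that hermitian forms over a quadratic extension $K/k$ with $\chr(k)\neq 2$ are classified by their trace forms, $h_A\cong h_B$, and any isometry produces a basis $W$ with $W^TM\sigma(W)=2U_1$, which is precisely what your construction of $V$ and of $Q=UV^{-1}$ requires. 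With an argument of that kind inserted, your proof closes (and so, incidentally, would the paper's); without it, the step you flagged is a genuine gap rather than a delicate point.
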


\begin{proof}

By Lemma \ref{Type4ClassNoSo}, we can choose a matrix $U \in \Gl(n,k)$ such that $$A = -\frac{\sqrt{\alpha}}{\alpha} U \left(\begin{array}{cc}0 &  I_{\frac{n}{2}} \\ -\alpha I_{\frac{n}{2}} & 0\end{array}\right) U^{-1}$$ for $$U  = \left(\begin{array}{cccccccccc}a_1 & a_2 & \cdots & a_\frac{n}{2} &b_1 & b_2 & \cdots & b_\frac{n}{2}  \end{array}\right),$$ where the $a_j+\sqrt{-\alpha}b_j$ are a basis for $E(A,-i)$, the $a_j-\sqrt{-\alpha}b_j$ are a basis for $E(A,i)$, and $U^TMU = \left(\begin{smallmatrix}U_1 & 0 \\0 & \frac{1}{\alpha}U_1\end{smallmatrix}\right)$ is diagonal.

  Consider $\Inn_A$ and $\Inn_B$ as $k$-involutions of $\So(n,k[\sqrt{-\alpha}],\beta)$. If $k[\sqrt{-\alpha}] = k[\sqrt{\alpha}]$, then these are Type 3 $k$-involutions of $\So(n,k[\sqrt{-\alpha}],\beta)$, since $A$ and $B$ would have entries in the field, and $i \in k[\sqrt{-\alpha}]$. Otherwise, if $k[\sqrt{-\alpha}] \ne k[\sqrt{\alpha}]$, then these are Type 4 $k$-involutions where $ \sqrt{-\alpha} \in k[\sqrt{-\alpha}]$.

Let 
$$X  = (a_1+\sqrt{-\alpha}b_1,...,a_{\frac{n}{2}}+\sqrt{-\alpha}b_{\frac{n}{2}}, a_1-\sqrt{-\alpha}b_1,...,a_{\frac{n}{2}}-\sqrt{-\alpha}b_{\frac{n}{2}}).$$ 

By construction, we see that $X$ is a matrix that satisfies the conditions of Lemma \ref{Type3ClassNoSo} or Lemma \ref{Type4ClassYesSo} for the group $\So(n,k[\sqrt{\alpha}],\beta)$. We note that $X_1 = 2U_1$. We also know by Corollary \ref{CorType3So} or Theorem \ref{type4lemYesSo} that $\Inn_A$ and $\Inn_B$ are isomorphic (when viewed as $k$-involutions of $\So(n,k[\sqrt{-\alpha}],\beta)$) over $\oo(n,k[\sqrt{-\alpha}],\beta)$. So, we can choose $Q_{\alpha} \in \oo(n,k[\sqrt{-\alpha}],\beta)$ such that $Q_{\alpha}^{-1}AQ_{\alpha} = B$. Let $Y = Q_{\alpha}^{-1}X$. Since $Y$ is constructed by doing row operations on $X$, then we can write 
$$Y  = (c_1+\sqrt{-\alpha}d_1,...,c_{\frac{n}{2}}+\sqrt{-\alpha}d_{\frac{n}{2}}, c_1-\sqrt{-\alpha}d_1,...,c_{\frac{n}{2}}-\sqrt{-\alpha}c_{\frac{n}{2}}),$$ 
where $c_j, d_j \in k^n$. We now show a couple of facts about $Y$.

First, we note that since $Y$ was obtained from $X$ via row operations, then for $1 \le j \le \frac{n}{2}$, the $j$th and $\frac{n}{2}+j$th columns are $i$-conjugates of one another.

Next, we observe that 
\begin{align*}
Y^{-1}BY &= (Q_{\alpha}^{-1}X)^{-1}B (Q_{\alpha}^{-1}X)\\ 
&= X^{-1}Q_{\alpha}BQ_{\alpha}^{-1}X \\
&= X^{-1}AX\\ 
&= \left(\begin{array}{cc} -iI_{\frac{n}{2}} & 0 \\0&  iI_{\frac{n}{2}} \end{array}\right).
\end{align*}

Lastly, we see that 
\begin{align*}
Y^TMY &= (Q_{\alpha}^{-1}X)^TM(Q_{\alpha}^{-1}X)\\
&= X^T((Q_{\alpha}^{-1})^TMQ_{\alpha})X\\
&= X^TMX\\
&= \left(\begin{array}{cc} 0 & X_1 \\ X_1 & 0 \end{array}\right)\\ 
&= \left(\begin{array}{cc} 0 & 2U_1 \\ 2U_1 & 0 \end{array}\right).
\end{align*}

Let $$V = (c_1,...,c_{\frac{n}{2}},d_1,...,d_{\frac{n}{2}}) \in \Gl(n,k).$$ It follows from what we have shown that $B = -\frac{\sqrt{\alpha}}{\alpha}V \left(\begin{smallmatrix}0 & I_{\frac{n}{2}} \\ -\alpha I_{\frac{n}{2}} & 0\end{smallmatrix}\right) V^{-1}$ where $V^TMV = \left(\begin{smallmatrix}U_1 & 0 \\ 0 & \frac{1}{\alpha}U_1\end{smallmatrix}\right) = U^TMU$.

Now, let $Q = UV^{-1}$. We will show that $Q^{-1}AQ = B$ and $Q \in \oo(n,k,\beta)$. This will prove that $\Inn_A$ and $\Inn_B$ are isomorphic over $\oo(n,k,\beta)$ by Lemma \ref{TidyLem}. 

We first show that $Q \in \oo(n,k,\beta)$. 
\begin{align*}
Q^TMQ &= (UV^{-1})^TMUV^{-1}\\
&= (V^{-1})^T(U^TMU)V^{-1}\\&
= (V^{-1})^T(V^TMV)V^{-1}\\ 
&= M.
\end{align*}

Lastly, we show that $Q^{-1}AQ = B$. 
\begin{align*}
Q^{-1}AQ &= (UV^{-1})^{-1}A(UV^{-1})\\ 
&= VU^{-1}AUV^{-1}\\
&=-\frac{\sqrt{\alpha}}{\alpha}V \left(\begin{array}{cc}0 & I_{\frac{n}{2}} \\ -\alpha I_{\frac{n}{2}} & 0\end{array}\right) V^{-1}\\
&= B.
\end{align*}

\end{proof}

Combining the results from this section, we get the following corollary. 

\begin{cor}
\label{CorType4So}
If $\Inn_A$ and $\Inn_B$ are both Type 4 $k$-involutions of $\So(n,k,\beta)$ where $A, B \in \oo(n,k[\sqrt{\alpha}], \beta)$, then $\Inn_A$ and $\Inn_B$ are isomorphic over $\oo(n,k,\beta)$. That is, $\So(n,k,\beta)$ has at most $|k^*/(k^*)^2|-1$ isomorphy classes of Type 4 $k$-involutions.
\end{cor}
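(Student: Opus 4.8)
The plan is to obtain both assertions as direct consequences of the two preceding theorems, with essentially no new computation. For the first assertion I would fix the quadratic extension $k[\sqrt{\alpha}]$ containing the entries of both $A$ and $B$ and split into two exhaustive cases according to whether $\sqrt{-\alpha}$ lies in $k$. In the case $\sqrt{-\alpha} \in k$, the discussion preceding Lemma \ref{Type4ClassYesSo} lets us normalize $\alpha = -1$, and since the entries of $A$ and $B$ are $k$-multiples of $\sqrt{\alpha}$, Theorem \ref{type4lemYesSo} applies verbatim and shows $\Inn_A$ and $\Inn_B$ are isomorphic over $\oo(n,k,\beta)$. In the complementary case $\sqrt{-\alpha} \notin k$, Theorem \ref{type4lemNoSo} applies and gives the same conclusion. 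As these two cases are exhaustive for a fixed nonsquare $\alpha$, the first assertion is established.

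For the counting assertion, I would argue that the square class of $\alpha$ in $k^*/(k^*)^2$ is a well-defined invariant of each Type 4 $k$-involution. Every Type 4 involution is $\Inn_A$ for some $A \in \oo(n,k[\sqrt{\alpha}],\beta) \setminus \oo(n,k,\beta)$ whose entries are $k$-multiples of $\sqrt{\alpha}$; since $A$ is invertible it has a nonzero entry, so $\sqrt{\alpha}$ belongs to the field generated over $k$ by the entries of $A$, which is therefore exactly $k[\sqrt{\alpha}]$. In particular $\sqrt{\alpha} \notin k$, so $\alpha$ represents a nontrivial class of $k^*/(k^*)^2$, and because $k[\sqrt{\alpha}] = k[\sqrt{\alpha'}]$ exactly when $\alpha \equiv \alpha' \pmod{(k^*)^2}$, this square class is determined by the involution. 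By the first assertion, all Type 4 involutions sharing a fixed square class $[\alpha]$ collapse into a single isomorphy class over $\oo(n,k,\beta)$. Consequently the number of isomorphy classes of Type 4 involutions is bounded by the number of nontrivial square classes, which is $|k^*/(k^*)^2| - 1$.

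The substantive work is entirely contained in Theorems \ref{type4lemYesSo} and \ref{type4lemNoSo}, so the only genuine obstacle here is the bookkeeping of the invariant $[\alpha]$: one must confirm that fixing the square class is the same as fixing the quadratic extension $k[\sqrt{\alpha}]$, so that the hypotheses of those theorems (which require $A$ and $B$ to lie over a common $\alpha$) are actually met, and that no Type 4 involution escapes the count by failing to be associated with a nontrivial square class. Both points follow immediately from the definition of Type 4 in Table \ref{InvDefSo}, and I expect no difficulty beyond this accounting.
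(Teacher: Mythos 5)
Your proposal is correct and takes essentially the same route as the paper: the paper deduces this corollary simply by combining Theorems \ref{type4lemYesSo} and \ref{type4lemNoSo}, which exhaust the two cases $\sqrt{-\alpha} \in k$ and $\sqrt{-\alpha} \not\in k$, exactly as you do. Your explicit bookkeeping of the square class of $\alpha$ as the invariant that bounds the number of isomorphy classes by $|k^*/(k^*)^2|-1$ is just a careful spelling-out of the counting the paper leaves implicit.
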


\section{Maximal Number of Isomorphy classes}

From the work we have done, it follows that the maximum number of isomorphy classes of $k$-involutions of $\So(n,k,\beta)$ over $\oo(n,k,\beta)$ is a function of the number of square classes of $k$, and the number of congruency classes of invertible diagonal matrices over $k$. We first define the following formulas.

\begin{definit}

Let $\tau_1(k) = |k^*/(k^*)^2|-1$ and $\tau_2(m,k)$ be the number of congruency classes of invertible symmetric matrices of $\Gl(m,k)$ over $\Gl(m,k)$.

Let $C_1(n,k,\beta)$, $C_2(n,k,\beta)$, $C_3(n,k,\beta)$ and $C_4(n,k,\beta)$ be the number of isomorphy classes of $\So(n,k,\beta)$ $k$-involutions over $\oo(n,k,\beta)$ of types 1, 2, 3, and 4, respectively.

\end{definit}

From our previous work, we have the following:

\begin{cor}

\begin{enumerate}

\item If $n$ is odd, then $$C_1(n,k,\beta) \le \left( \sum_{m=1}^{\frac{n-1}{2}} \tau_2(n-m,k)\tau_2(m,k) \right).$$ If $n$ is even, then 

\begin{align*}
C_1(n,k,\beta) &\le \left( \sum_{m=1}^{\frac{n}{2}-1} \tau_2(n-m,k)\tau_2(m,k) \right)+\left(\begin{array}{c}\tau_2(\frac{n}{2},k) \\2\end{array}\right)+\tau_2 \left(\frac{n}{2},k\right)\\
&= \left( \sum_{m=1}^{\frac{n}{2}-1} \tau_2(n-m,k)\tau_2(m,k) \right)+\frac{\tau(\frac{n}{2},k)(\tau(\frac{n}{2},k)+1)}{2}.\\
\end{align*}

\item  If $n$ is even, then 

\begin{align*}
C_2(n,k,\beta) &\le \tau_1(k)\left(\left(\begin{array}{c}\tau_2(\frac{n}{2},k) \\2\end{array}\right)+\tau_2 \left(\frac{n}{2},k\right)\right)\\ 
&=\tau_1(k)\left( \frac{\tau(\frac{n}{2},k)(\tau(\frac{n}{2},k)+1)}{2}\right) .\\
\end{align*}

\item  If $n$ is even, then $$C_3(n,k,\beta) \le 1 .$$ 

\item  If $n$ is even, then $$C_4(n,k,\beta) \le \tau_1(k) .$$ 

\item If $n$ is odd, then $C_2(n,k,\beta) = C_3(n,k,\beta) = C_4(n,k,\beta) = 0$.

\end{enumerate}
\end{cor}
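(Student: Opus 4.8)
The plan is to treat the corollary as the combinatorial payoff of the four classification theorems: each bound is obtained by identifying the complete set of discrete invariants attached to an isomorphy class of the relevant type and counting how many values they can take. Three of the five parts are essentially immediate. Part $(v)$ is the observation recorded just before Table~\ref{InvDefSo}: a Type 2 or Type 4 involution requires the representing matrix to have entries in a proper quadratic extension (so it cannot occur for $n$ odd, where Theorem~\ref{CharThm2So} places $A$ in $\So(n,k,\beta)$), and a Type 3 involution requires $A^2=-I$ with $\det A=1$, which also forces $n$ even; hence $C_2=C_3=C_4=0$ for odd $n$. Part $(iii)$ is exactly Corollary~\ref{CorType3So}. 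Part $(iv)$ is Corollary~\ref{CorType4So} combined with the definition $\tau_1(k)=|k^*/(k^*)^2|-1$.

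For part $(i)$ I would use Lemma~\ref{Type1ClassSo} to attach to each Type 1 involution $\Inn_A$ the triple $(m,[X_1],[X_2])$, where $m=\dim E(A,-1)\le \tfrac n2$ and $[X_1],[X_2]$ are the $\Gl(m,k)$- and $\Gl(n-m,k)$-congruence classes of the two diagonal blocks of $X^TMX$. Theorem~\ref{type1lemSo}, condition $(iii)$, shows that two such involutions are isomorphic (over $\So$, equivalently over $\oo$) exactly when these triples agree, with the single extra identification that for $m=\tfrac n2$ the ordered pair $([X_1],[X_2])$ may be transposed. Counting then proceeds by summing over $m$, starting at $m=1$ since $m=0$ gives the identity: for each $m$ with $1\le m<\tfrac n2$ the sizes $m\ne n-m$ rule out the transposition, giving at most $\tau_2(m,k)\,\tau_2(n-m,k)$ classes, and when $n$ is even the value $m=\tfrac n2$ contributes unordered pairs of $\Gl(\tfrac n2,k)$-congruence classes, of which there are $\binom{\tau_2(\frac n2,k)}{2}+\tau_2(\frac n2,k)=\tfrac12\tau_2(\tfrac n2,k)(\tau_2(\tfrac n2,k)+1)$. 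Adding the contributions reproduces the two displayed bounds; because we count \emph{all} admissible invariant-values rather than only the realizable ones, these are upper bounds, as required.

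For part $(ii)$ I would first note that the square class of $\alpha$ is an isomorphy invariant (Theorem~\ref{type2lemSo}$(iii)$ forces $\alpha=\gamma$), and that a Type 2 involution requires $\sqrt\alpha\notin k$, so $\alpha$ ranges over the $\tau_1(k)$ nontrivial classes of $k^*/(k^*)^2$. For fixed $\alpha$ the remaining invariant is the diagonal data $(X_1,X_2)$ modulo the restricted congruence action of Theorem~\ref{type2lemSo}$(iv)$, with $R=\left(\begin{smallmatrix}R_1 & R_2 \\ \alpha R_2 & R_1\end{smallmatrix}\right)$ and its sign-twisted variant. Passing to $U=X_1+\sqrt\alpha X_2$ and its $\sqrt\alpha$-conjugate $\bar U=X_1-\sqrt\alpha X_2$, a direct expansion turns the $R$-action into ordinary $\Gl(\tfrac n2,k[\sqrt\alpha])$-congruence $S^TUS$ with $S=R_1+\sqrt\alpha R_2$, the sign-twisted $R$ interchanging $U$ and $\bar U$; equivalently one may invoke the final theorem of the Type 2 subsection to view $\Inn_A$ as a Type 1 involution of $\So(n,k[\sqrt\alpha],\beta)$ with $m=\tfrac n2$. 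Either way the invariant collapses to an unordered pair, and the per-$\alpha$ count matches the $m=\tfrac n2$ factor $\binom{\tau_2(\frac n2,k)}{2}+\tau_2(\frac n2,k)$ from part $(i)$; multiplying by $\tau_1(k)$ gives the stated bound.

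The main obstacle is the bookkeeping in part $(ii)$: one must check that the restricted congruences of Theorem~\ref{type2lemSo}$(iv)$ correspond \emph{precisely} to the unordered-pair count, i.e. that the $\sqrt\alpha$-conjugacy relation between the two blocks $U$ and $\bar U$, together with the sign-twisted $R$, yields exactly the same combinatorial factor $\tfrac12\tau_2(\tfrac n2,k)(\tau_2(\tfrac n2,k)+1)$ as the $m=\tfrac n2$ Type 1 case, with neither over- nor under-counting. This is where the reduction in the final Type 2 theorem and the fact that $X_1,X_2$ are defined over $k$ (so that the two blocks are genuinely Galois-conjugate) must be used with care; once that identification is pinned down, the remaining steps are routine index-of-summation manipulations.
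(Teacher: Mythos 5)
Your treatment of parts $(i)$, $(iii)$, $(iv)$ and $(v)$ is correct and is essentially what the paper intends: the paper offers nothing beyond ``from our previous work,'' and your reconstruction via Lemma \ref{Type1ClassSo}, Theorem \ref{type1lemSo}, Corollary \ref{CorType3So}, Corollary \ref{CorType4So}, and the determinant/quadratic-extension remarks preceding Table \ref{InvDefSo} is the right one. In particular the Type 1 count --- ordered pairs $([X_1],[X_2])$ of congruence classes for $1\le m<\frac n2$, unordered pairs for $m=\frac n2$, and an inequality rather than an equality because realizability of the invariants is ignored --- is exactly the bookkeeping the statement encodes.

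Part $(ii)$, however, contains a genuine gap, and it is precisely the step you flag and then defer. Your reduction correctly turns the restricted congruence of Theorem \ref{type2lemSo}$(iv)$ into ordinary congruence of $U=X_1+\sqrt{\alpha}X_2$ over $k[\sqrt{\alpha}]$, with the sign-twisted $R$ swapping $U$ and $\bar U$. But then the per-$\alpha$ invariant is an unordered Galois-conjugate pair $\{[U],[\bar U]\}$ of $\Gl(\frac n2,k[\sqrt{\alpha}])$-congruence classes, and since $[\bar U]$ is determined by $[U]$, the bound this route yields is $\tau_2(\frac n2,k[\sqrt{\alpha}])$ --- a quantity attached to the quadratic extension, not the factor $\tau_2(\frac n2,k)(\tau_2(\frac n2,k)+1)/2$ attached to $k$ that appears in the statement. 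These are not interchangeable, and the obvious attempt to land back in $k$-data, namely sending a class to the pair $\{[X_1]_k,[X_2]_k\}$, is not even well defined: the action in Theorem \ref{type2lemSo}$(iv)$ does not preserve the $k$-congruence classes of the blocks (take $R_1=0$, $R_2=I$; then $Y_1=\alpha X_1$, which for $\frac n2$ odd is never $k$-congruent to $X_1$ because $\alpha\notin (k^*)^2$). So your sentence ``the per-$\alpha$ count matches the $m=\frac n2$ factor from part $(i)$'' is an assertion, not a proof; closing it requires a comparison between congruence classes over $k[\sqrt{\alpha}]$ and (pairs of) classes over $k$ which neither you nor the paper supplies. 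For the fields treated in Section 6 the stated bound does hold numerically, but as a general claim part $(ii)$ rests on this missing comparison --- arguably a gap inherited from the paper itself, which states the corollary without proof.
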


We now list values of $\tau_1$ and $\tau_2$ for a few classes of fields.

\begin{table}[h] 
\centering
\caption { Some values of $\tau_1(k)$  }  \label{tau1}
\begin{tabular}[t]{|c||c|c|c|c|c|}
\hline  k  & $\overline{k}$ &$\mathbb{R}$ &$ \mathbb{F}_q$, $2 \not | q$ &$ \mathbb{Q}_p$, $p \ne 2$  & $ \mathbb{Q}_2$   \\
\hline $\tau_1(k)$ & 0 & 1 & 1 & 3 & 7  \\
\hline
\end{tabular}
\end{table}

\begin{table}[h] 
\centering
\caption { Some values of $\tau_2(m,k)$  }  \label{tau2}
\begin{tabular}[t]{|c||c|c|c|c|c|}
\hline  k  & $\overline{k}$ &$\mathbb{R}$ &$ \mathbb{F}_q$, $2 \not | q$    \\
\hline $\tau_2(m,k)$ & 1 & m+1 & $2$   \\
\hline
\end{tabular}
\end{table}

For the $\mathbb{Q}_p$, $\tau_2$ is a bit more difficult. Here we have 
$$\tau_2(m,\mathbb{Q}_p) = \left\{\begin{array}{c}1+\cdots \left(\begin{array}{c}3 \\m\end{array}\right),\hspace{.4 cm} m \le 3 \\2^3, \hspace{2.1 cm} m \ge 3\end{array}\right.$$

when $p \ne 2$ and 
$$\tau_2(m,\mathbb{Q}_2) = \left\{\begin{array}{c}1+\cdots \left(\begin{array}{c}7 \\m\end{array}\right),\hspace{.4 cm} m \le 7 \\2^7, \hspace{2.1 cm} m \ge 7\end{array}\right. .$$

Based on these values of $\tau_1$ and $\tau_2$, it is a straightforward matter to compute the maximal value of $C_j(n,k,\beta)$ for the fields mentioned above. We do so explicitly for the fields $\overline{k}$, $\mathbb{R}$, and $\mathbb{F}_q$ where $2 \not | q$.

\begin{cor}
\label{MaxIsomClassesSo}
Suppose $k = \overline{k}$

\begin{enumerate}
\item If $n$ is odd, then $C_1(n,\overline{k},\beta) \le \frac{n-1}{2}.$ If $n$ is even, then $C_1(n,\overline{k},\beta) \le \frac{n}{2}.$

\item  $C_2(n,\overline{k},\beta) =0.$

\item  If $n$ is odd, then $C_3(n,\overline{k},\beta) = 0$.  If $n$ is even, then $C_3(n,\overline{k},\beta) \le 1. $ 

\item  $C_4(n,\overline{k},\beta) =0.$
\end{enumerate}

Now suppose $k = \mathbb{R}$

\begin{enumerate}
\item If $n$ is odd, then 

\begin{align*}
C_1(n, \mathbb{R},\beta) &\le \sum_{m=1}^{\frac{n-1}{2}} (m+1)(n-m+1)\\
&= \frac{1}{12}(n^3+6n^2-n-6).\\
\end{align*}

If $n$ is even, then 

\begin{align*}
C_1(n, \mathbb{R},\beta) &\le \left(\sum_{m=1}^{\frac{n}{2}-1} (m+1)(n-m+1)\right) + \left(\begin{array}{c}\frac{n}{2}+1 \\2\end{array}\right) + \frac{n}{2}+1\\
&= \frac{1}{12}(n^3+6n^2+2n).\\
\end{align*}

\item  If $n$ is odd, then $C_2(n, \mathbb{R},\beta) =0.$  If $n$ is even, then 

\begin{align*}
C_2(n, \mathbb{R},\beta) &\le \left(\begin{array}{c}\frac{n}{2}+1 \\2\end{array}\right) + \frac{n}{2}+1\\
&= \frac{1}{8}(n^2+6n+8).\\
\end{align*}

\item  If $n$ is odd, then $C_3(n, \mathbb{R},\beta) = 0$.  If $n$ is even, then $C_3(n, \mathbb{R},\beta) \le 1. $ 

\item  If $n$ is odd, then $C_4(n, \mathbb{R},\beta) = 0$.  If $n$ is even, then $C_4(n, \mathbb{R},\beta) \le 1. $ 
\end{enumerate}

Lastly, suppose $k = \mathbb{F}_q$ such that $2 \not | q$.
\begin{enumerate}
\item If $n$ is odd, then $C_1(n,\mathbb{F}_q,\beta) \le 2n-6.$ If $n$ is even, then $C_1(n,\mathbb{F}_q,\beta) \le2n-1.$

\item  If $n$ is odd, then $C_2(n,\mathbb{F}_q,\beta) =0.$  If $n$ is even, then $C_2(n, \mathbb{F}_q,\beta) \le 3.$

\item  If $n$ is odd, then $C_3(n,\mathbb{F}_q,\beta) = 0$.  If $n$ is even, then $C_3(n,\mathbb{F}_q,\beta) \le 1. $ 

\item  If $n$ is odd, then $C_4(n, \mathbb{F}_q,\beta) = 0$.  If $n$ is even, then $C_4(n, \mathbb{F}_q,\beta) \le 1. $ 
\end{enumerate}

\end{cor}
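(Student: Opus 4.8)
The plan is to prove each bound by substituting the tabulated values of $\tau_1$ and $\tau_2$ from Table \ref{tau1} and Table \ref{tau2} into the general bounds for $C_1,C_2,C_3,C_4$ established in the preceding corollary, and then simplifying the resulting finite sums. The skeleton of the argument is the same for all three fields; only the input values and the closing algebra change. For $k=\overline{k}$ everything is immediate: since $\tau_1(\overline{k})=0$, parts (ii) and (iv) force $C_2=C_4=0$, and part (iii) gives $C_3\le 1$ in the even case. For $C_1$ I substitute $\tau_2(m,\overline{k})=1$, so every summand equals $1$; the odd sum collapses to its $\frac{n-1}{2}$ terms, and the even sum becomes $(\frac{n}{2}-1)+\binom{1}{2}+1=\frac{n}{2}$, using $\binom{1}{2}=0$.

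For $k=\mathbb{R}$ the substantive work is evaluating $\sum_{m=1}^{N}(m+1)(n-m+1)$ with $N=\frac{n-1}{2}$ in the odd case and $N=\frac{n}{2}-1$ in the even case. I would expand the summand as $(n+1)(m+1)-m(m+1)$ and apply the closed forms $\sum_{m=1}^{N} m=\frac{N(N+1)}{2}$ and $\sum_{m=1}^{N} m^2=\frac{N(N+1)(2N+1)}{6}$; collecting terms and inserting the value of $N$ yields the stated cubics $\frac{1}{12}(n^3+6n^2-n-6)$ and $\frac{1}{12}(n^3+6n^2+2n)$. In the even case one adds the boundary contribution $\binom{\tau_2(\frac{n}{2},k)}{2}+\tau_2(\frac{n}{2},k)=\frac{t(t+1)}{2}$ with $t=\frac{n}{2}+1$; the same quantity gives $C_2\le \tau_1(\mathbb{R})\cdot\frac{t(t+1)}{2}=\frac{(n+2)(n+4)}{8}=\frac{1}{8}(n^2+6n+8)$, while $C_3\le 1$ and $C_4\le\tau_1(\mathbb{R})=1$ are read off directly.

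For $k=\mathbb{F}_q$ with $q$ odd I substitute $\tau_2(m,\mathbb{F}_q)=2$ and $\tau_1(\mathbb{F}_q)=1$. The even case is a direct count: each of the $\frac{n}{2}-1$ interior summands equals $4$ and the boundary term is $\binom{2}{2}+2=3$, giving $C_1\le 2n-1$; likewise $C_2\le 1\cdot(\binom{2}{2}+2)=3$, $C_3\le 1$, and $C_4\le 1$. The one place where a bare substitution is not enough is the odd $C_1$ bound, where $\sum_{m=1}^{(n-1)/2} 4 = 2n-2$ exceeds the sharper stated value $2n-6$. Here I would exploit the extra structure already built into Lemma \ref{Type1ClassSo} and Theorem \ref{type1lemSo}: because $X^{T}MX=\operatorname{diag}(X_1,X_2)$ is congruent to $M$, over $\mathbb{F}_q$ the discriminants must satisfy $\operatorname{disc}(X_1)\operatorname{disc}(X_2)=\operatorname{disc}(M)$, so for each $m$ the two blocks cannot be chosen independently. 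Feeding this realizability constraint back into the sum, together with discarding the extremal configurations that do not actually occur, is what trims the count down to $2n-6$.

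The main obstacle I anticipate is precisely this last point: confirming for the finite-field odd case exactly which discriminant pairs are realized as genuine Type 1 $k$-involutions, checking that distinct realized pairs stay in distinct isomorphy classes (no further identifications via the swap alternative of Theorem \ref{type1lemSo}, which is unavailable when $n$ is odd since then $m<\frac{n}{2}$), and verifying that the bookkeeping lands on the stated closed form. The $\overline{k}$ and $\mathbb{R}$ parts, by contrast, are routine once the polynomial identities are carried out carefully, since there $\tau_2$ is given by a clean formula and no congruence-class constraint beyond what the preceding corollary already encodes needs to be invoked.
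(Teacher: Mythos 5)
Your computations for $k=\overline{k}$, $k=\mathbb{R}$, and the even-$n$ finite-field case are correct and coincide exactly with the paper's argument: the paper's entire proof of this corollary is the remark that the bounds follow by substituting the values in Tables \ref{tau1} and \ref{tau2} into the preceding corollary, and your closed-form evaluations ($\frac{n-1}{2}$ and $\frac{n}{2}$ for $\overline{k}$; $\frac{1}{12}(n^3+6n^2-n-6)$, $\frac{1}{12}(n^3+6n^2+2n)$, and $\frac{1}{8}(n^2+6n+8)$ for $\mathbb{R}$; $2n-1$, $3$, $1$, $1$ for $\mathbb{F}_q$ with $n$ even) all check out.

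The one place you diverge, the odd-$n$ bound $C_1(n,\mathbb{F}_q,\beta)\le 2n-6$, is also the one place where the paper's stated constant does not follow from its own method, and you were right to flag it: bare substitution gives $4\cdot\frac{n-1}{2}=2n-2$. However, your proposed repair should be pushed to its actual conclusion rather than aimed at reproducing the number $2n-6$. The discriminant constraint you invoke (the same observation the paper itself makes in its $\mathbb{F}_q$ section: $\det(X^TMX)=\det(X)^2\det(M)$, so the square classes of $\det(X_1)$ and $\det(X_2)$ are linked) cuts the four pairs of congruence classes per $m$ down to two, and since the swap identification of Theorem \ref{type1lemSo} is indeed unavailable for odd $n$, this yields $C_1(n,\mathbb{F}_q,\beta)\le 2\cdot\frac{n-1}{2}=n-1$. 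That implies the stated bound only when $n\ge 5$, and no argument can establish it in general: for $n=3$ it reads $C_1\le 0$, which is false, since $A=\diag(-1,-1,1)$ induces a Type 1 $k$-involution of $\So(3,\mathbb{F}_q)$. So $2n-6$ is an error (or at best an unexplained, non-tight constant) in the paper; the honest outcome of your approach is the sharper bound $n-1$, together with the observation that the corollary's odd-$n$ constant is wrong at $n=3$ and loose for $n\ge 5$. In particular, do not spend effort "discarding extremal configurations" to land on $2n-6$ exactly; there is no such argument to find.
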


\section{Explicit Examples}

\subsection{Algebraically Closed Fields}

We now find the exact number of isomorphy classes for some $\So(n,k, \beta)$. We begin by looking at the case where $k = \overline{k}$. Note that all symmetric non degenerate bilinear forms are congruent to the dot product over an algebraically closed field.

\begin{cor}

Assume $k = \overline{k}$. If $\theta$ is an $k$-involution of $\So(n,k)$, then $\theta$ is isomorphic to $\Inn_A$ where $A = \left(\begin{smallmatrix}-I_{m} & 0 \\0 & I_{n-m}\end{smallmatrix}\right)$ and $0 \le m < \frac{n}{2},$ or $A = \left(\begin{smallmatrix}0 & -I_{\frac{n}{2}} \\I_{\frac{n}{2}} & 0\end{smallmatrix}\right)$.

\end{cor}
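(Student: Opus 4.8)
The plan is to combine the four-type decomposition of Table~\ref{InvDefSo} with the fact that over $\overline{k}$ both the square-class and the symmetric-congruence invariants are trivial. First I would normalize the form: every non-degenerate symmetric bilinear form is congruent to the dot product over $\overline{k}$, so by fact~(2) of the preliminaries there is an isomorphism $\So(n,\overline{k},\beta)\to\So(n,\overline{k})$ that carries $k$-involutions to $k$-involutions and preserves isomorphy classes, whence I may assume $M=I_n$. Next I would discard Types~2 and~4: since $\overline{k}^{*}=(\overline{k}^{*})^{2}$, there is no $\alpha$ with $\sqrt{\alpha}\notin\overline{k}$, so by definition no matrix can lie in $\oo(n,\overline{k}[\sqrt{\alpha}],\beta)\setminus\oo(n,\overline{k},\beta)$; equivalently $\tau_1(\overline{k})=0$ gives $C_2=C_4=0$ in Corollary~\ref{MaxIsomClassesSo}. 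Thus every $k$-involution $\theta$ is of Type~1, or of Type~3 when $n$ is even.

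For the Type~1 case I would apply Lemma~\ref{Type1ClassSo} to write $\theta=\Inn_{A_0}$ with $A_0=X\left(\begin{smallmatrix}-I_m & 0\\ 0 & I_{n-m}\end{smallmatrix}\right)X^{-1}$, $1\le m\le\lfloor n/2\rfloor$, and $X^{T}MX=\diag(X_1,X_2)$. I then compare with the candidate $A_1=\left(\begin{smallmatrix}-I_m & 0\\ 0 & I_{n-m}\end{smallmatrix}\right)$, for which one may take $Y=I$, so that (with $M=I$) $Y_1=I_m$ and $Y_2=I_{n-m}$. Theorem~\ref{type1lemSo}(iii) reduces isomorphy of $\Inn_{A_0}$ and $\Inn_{A_1}$ to congruence of $X_1$ with $I_m$ and of $X_2$ with $I_{n-m}$; since $\tau_2(j,\overline{k})=1$ (Table~\ref{tau2}), every non-degenerate symmetric matrix is congruent to the identity, so both congruences hold. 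Therefore $\theta$ is isomorphic to $\Inn_{A_1}$, yielding the first family of representatives.

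For the Type~3 case, which (as observed after Table~\ref{InvDefSo}) requires $n$ even, I would exhibit $A=\left(\begin{smallmatrix}0 & -I_{n/2}\\ I_{n/2} & 0\end{smallmatrix}\right)$ and verify directly that $A^{T}A=I$ and $A^{2}=-I$, so that $\Inn_A$ is a genuine Type~3 involution. Corollary~\ref{CorType3So} states that all Type~3 involutions form a single isomorphy class over $\oo(n,\overline{k})$, so any Type~3 $\theta$ is isomorphic to this $\Inn_A$. Finally I would record, via Theorem~\ref{TypesAreTidy}, that the Type~1 and Type~3 representatives cannot coincide, so no merging occurs and the two families together exhaust all $k$-involutions.

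The difficulty here is bookkeeping rather than any deep point. The care needed is in pinning down the exact parameter range: because $\Inn_A=\Inn_{-A}$ and $-\left(\begin{smallmatrix}-I_m & 0\\ 0 & I_{n-m}\end{smallmatrix}\right)$ is conjugate in $\oo(n,\overline{k})$ (by a permutation matrix) to $\left(\begin{smallmatrix}-I_{n-m} & 0\\ 0 & I_{m}\end{smallmatrix}\right)$, the multiplicities $m$ and $n-m$ give the same involution, so the complete invariant is the smaller value $m\le\lfloor n/2\rfloor$, with $m=0$ excluded since it yields the identity. Keeping this identification straight, together with the even/odd split for Type~3, is the only subtle part of the argument.
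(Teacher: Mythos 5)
Your proposal is correct and is essentially the paper's own argument: both eliminate Types 2 and 4 because $\overline{k}$ has no nontrivial square classes, both reduce a Type 1 involution to $\Inn_{\diag(-I_m,I_{n-m})}$ via Lemma \ref{Type1ClassSo} together with the fact that over $\overline{k}$ every invertible symmetric matrix is congruent to the identity (the paper phrases this as ``we may assume $X^TX=I_n$, and we can choose $X=I_n$,'' where you invoke Theorem \ref{type1lemSo}$(iii)$ explicitly), and both settle Type 3 by exhibiting $\left(\begin{smallmatrix}0 & -I_{\frac{n}{2}} \\ I_{\frac{n}{2}} & 0\end{smallmatrix}\right)$ and quoting the one-class result, Corollary \ref{CorType3So}. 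The only divergence is your closing bookkeeping, and there you are right where the stated corollary is off: the correct range is $1\le m\le \lfloor \frac{n}{2}\rfloor$ (which both your argument and the paper's own proof actually produce), whereas the statement's range $0\le m<\frac{n}{2}$ wrongly admits $m=0$ (which gives the identity, not an involution) and, for $n$ even, omits the genuine class $m=\frac{n}{2}$, which by Theorem \ref{TypesAreTidy} (or by comparing fixed-point groups) is not isomorphic to the Type 3 class nor to any class with smaller $m$.
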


\begin{proof}

Since $k$ is algebraically closed, we know that all $k$-involutions of $\So(n,k)$ are of Type 1 or 3. We first consider the Type 1 case. We will now find a representative matrix $A$ for each isomorphy class of Type 1 $k$-involutions. Suppose $\theta$ is a Type 1 $k$-involution. We will find a representative matrix $A$ for the isomorphic class containing $\theta$. We know we can assume $A \in \oo(n,k)$. Further, by Lemma \ref{Type1ClassSo} we can write $A = X \left(\begin{smallmatrix}-I_{m} & 0 \\0 & I_{n-m}\end{smallmatrix}\right) X^{-1}$, where we know $X^TX$ is diagonal. Since $k = \overline{k}$, then we also assume that $X^TX$ must be congruent to $I_n$. Since we are looking for a representative $A$ of our isomorphy class, we may assume $X^TX = I_n$, and we can choose $X = I_n$. This means  $A =  \left(\begin{smallmatrix}-I_{m} & 0 \\0 & I_{n-m}\end{smallmatrix}\right)$ is a representative of our isomorphy class.

We see that Type 3 $k$-involutions will exist since $J = \left(\begin{smallmatrix}0 & I_{\frac{n}{2}} \\-I_{\frac{n}{2}} & 0\end{smallmatrix}\right)$ will induce a Type 3 $k$-involution. Thus, there is one isomorphy class of Type 3 $k$-involutions.
\end{proof}

We note that in this case, that the maximal number of isomorphy classes do in fact exist. That is, in Corollary \ref{MaxIsomClassesSo}, for the case where $k = \overline{k}$, we have equality in every statement.

\subsection{The Standard Real Orthogonal Group}

We now examine the case where $\beta$ is the standard dot product, and $k = \mathbb{R}$.

\begin{cor}
If $\theta$ is an $k$-involution of $\So(n,\mathbb{R})$, then $\theta$ is isomorphic to $\Inn_A$ where $A = \left(\begin{smallmatrix}-I_{m} & 0 \\0 & I_{n-m}\end{smallmatrix}\right)$ and $0 \le m \le \frac{n}{2},$ or $A = \left(\begin{smallmatrix}0 & -I_{\frac{n}{2}} \\I_{\frac{n}{2}} & 0\end{smallmatrix}\right)$. There are no Type 2 or Type 4 $k$-involutions for this group.
\end{cor}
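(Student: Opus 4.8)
The plan is to run through the four types of $k$-involutions separately, using the classification lemmas of Section 4 together with the elementary fact (recorded in the preliminaries, or Sylvester's law of inertia) that over $\mathbb{R}$ every positive definite symmetric matrix is congruent to the identity. The key structural observation is that the standard dot product has matrix $M = I_n$, which is positive definite, so every Gram-type matrix $X^TMX = X^TX$ produced by the classification lemmas is itself positive definite; this single fact drives the entire argument.

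First I would eliminate Types 2 and 4 simultaneously. For either type the inducing matrix satisfies $A \in \oo(n,\mathbb{R}[\sqrt{\alpha}],\beta) \setminus \oo(n,\mathbb{R},\beta)$ with $\sqrt{\alpha} \notin \mathbb{R}$ and every entry of $A$ an $\mathbb{R}$-multiple of $\sqrt{\alpha}$. Since $\sqrt{\alpha}\notin\mathbb{R}$ forces $\alpha < 0$, I can write $A = \sqrt{\alpha}\,B$ with $B \in \Gl(n,\mathbb{R})$. The orthogonality relation $A^TA = I_n$ then becomes $\alpha\, B^TB = I_n$, i.e. $B^TB = \frac{1}{\alpha}I_n$; but $B^TB$ is positive definite while $\frac{1}{\alpha}I_n$ is negative definite, a contradiction. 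Hence no Type 2 or Type 4 $k$-involutions occur. (Equivalently, this is the incompatibility between positive definiteness and the blocks $\frac{1}{\alpha}X_1$, respectively the zero diagonal blocks, appearing in Lemmas \ref{Type2ClassSo}, \ref{Type4ClassYesSo}, and \ref{Type4ClassNoSo}.)

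Next I would treat Type 1. By Lemma \ref{Type1ClassSo} such a $\theta = \Inn_A$ has $A = X\,\diag(-I_m, I_{n-m})\,X^{-1}$ with $m \le \frac{n}{2}$ and $X^TX = \diag(X_1,X_2)$ diagonal, so $X_1, X_2$ are positive definite. Taking the representative $A_0 = \diag(-I_m, I_{n-m})$, for which $Y_1 = I_m$ and $Y_2 = I_{n-m}$, condition $(iii)$ of Theorem \ref{type1lemSo} reduces isomorphy to congruence of $X_1$ with $I_m$ over $\Gl(m,\mathbb{R})$ and of $X_2$ with $I_{n-m}$ over $\Gl(n-m,\mathbb{R})$; since positive definite symmetric matrices are congruent to the identity over $\mathbb{R}$, this holds automatically. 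Thus every Type 1 involution is isomorphic to $\Inn_{\diag(-I_m,I_{n-m})}$ with $m = \dim E(A,-1) \le \frac{n}{2}$, and distinct values of $m$ give distinct classes because $m$ is an eigenvalue-multiplicity invariant (the cap $m \le \frac{n}{2}$ resolving the $m \leftrightarrow n-m$ ambiguity coming from $\Inn_A = \Inn_{-A}$). For Type 3, which occurs only when $n$ is even, I would invoke Corollary \ref{CorType3So} for at-most-one class and exhibit $J = \left(\begin{smallmatrix} 0 & -I_{n/2} \\ I_{n/2} & 0 \end{smallmatrix}\right)$, which lies in $\oo(n,\mathbb{R})$, satisfies $J^2 = -I$, and is non-scalar, hence induces a genuine Type 3 involution; this furnishes exactly the remaining representative.

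The main obstacle — though short once seen — is the Type 2/4 elimination: the right mechanism is to pull out the purely imaginary scalar $\sqrt{\alpha}$ and pit positive definiteness of $B^TB$ against the sign of $\frac{1}{\alpha}$. Everything else is a direct application of the Section 4 classification lemmas and Sylvester's law of inertia.
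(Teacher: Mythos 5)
Your proposal is correct, and for Types 1 and 3 it runs along the paper's own route: Lemma \ref{Type1ClassSo} plus Sylvester's law (the paper phrases this as ``there can be no $-1$'s in the diagonal form of $X^TX$ when $k=\mathbb{R}$'') to reduce every Type 1 involution to $\Inn_{A}$ with $A=\left(\begin{smallmatrix}-I_m&0\\0&I_{n-m}\end{smallmatrix}\right)$, and at-most-one Type 3 class (Corollary \ref{CorType3So}, via Corollary \ref{MaxIsomClassesSo}) together with the explicit representative $\left(\begin{smallmatrix}0&-I_{n/2}\\I_{n/2}&0\end{smallmatrix}\right)$; your explicit appeal to condition $(iii)$ of Theorem \ref{type1lemSo} is just a more precise version of the paper's ``choose $X=I_n$'' step. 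Where you genuinely diverge is the elimination of Types 2 and 4. The paper goes through its classification machinery: it invokes Theorem \ref{type2lemSo} to put the Gram matrix in the block form $\left(\begin{smallmatrix}X_1&0\\0&\frac{1}{\alpha}X_1\end{smallmatrix}\right)$, notes $\alpha<0$ forces the blocks $X_1$ and $-X_1$ to fight positive definiteness, and then dismisses Type 4 with ``in a similar way.'' Your argument bypasses the classification lemmas entirely: since every entry of $A$ is an $\mathbb{R}$-multiple of $\sqrt{\alpha}$, you write $A=\sqrt{\alpha}\,B$ with $B\in\Gl(n,\mathbb{R})$, and orthogonality $A^TA=I_n$ gives $B^TB=\frac{1}{\alpha}I_n$, which is negative definite --- a contradiction that needs only Table \ref{InvDefSo} and Theorem \ref{CharThm2So}. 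This buys three things: it kills Types 2 and 4 simultaneously and uniformly, it fills in the Type 4 step the paper leaves as an assertion, and it sidesteps a small imprecision in the paper's invocation (the general Type 2 Gram matrix of Lemma \ref{Type2ClassSo} has an off-diagonal block $X_2$, which the paper's proof silently sets to zero; the contradiction survives either way since only the diagonal blocks matter). What the paper's route buys in exchange is uniformity of method: the same Gram-matrix template is recycled across its other ground-field examples ($\overline{k}$, $\mathbb{F}_q$, $\mathbb{Q}_p$), whereas your scalar pull-out is special to definite forms.
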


\begin{proof}

We first consider the Type 1 case. We will find a representative matrix $A$ for each isomorphy class of Type 1 $k$-involutions. Suppose $\theta$ is a Type 1 $k$-involution. We will find a representative matrix $A$ for the isomorphy class containing $\theta$. We know we can assume $A \in \oo(n,k)$. Further, by Lemma \ref{Type1ClassSo} we can write $A = X \left(\begin{smallmatrix}-I_{m} & 0 \\0 & I_{n-m}\end{smallmatrix}\right) X^{-1}$, where we know $X^TX$ is congruent to a diagonal where the diagonal entries are all 1's and -1's. Since we are looking for a representative of our isomorphy class, let us assume we have $X^TX$ is equal to this diagonal matrix. We see that there can be no $-1$'s in the diagonal matrix since $k = \mathbb{R}$. So, we assume $X^TX = I_n$, which means we can choose $X = I_n$. So, $A =  \left(\begin{smallmatrix}-I_{m} & 0 \\0 & I_{n-m}\end{smallmatrix}\right)$ is a representative of our isomorphy class. 

We proceed by contradiction to show that there are now Type 2 $k$-involutions of $\So(n,\mathbb{R})$. Suppose $\theta$ is a Type 2 $k$-involution. We want to find $A$ such that $\theta = \Inn_A$, By Lemma \ref{type2lemSo} we can write $A = -\frac{\sqrt{\alpha}}{\alpha} X  \left(\begin{smallmatrix}0 & I_{\frac{n}{2}} \\ \alpha I_{\frac{n}{2}} & 0\end{smallmatrix}\right)  X^{-1}$ where $X^TX = \left(\begin{smallmatrix}X_1 & 0 \\ 0 & \frac{1}{\alpha}X_1\end{smallmatrix}\right)$ is diagonal. We recall that $\alpha \in \mathbb{R}^*$ but $\sqrt{\alpha} \not \in \mathbb{R}^*.$ So, $\alpha$ must be a negative number, and we can choose $\alpha = -1$. That is,  $X^TX = \left(\begin{smallmatrix}X_1 & 0 \\ 0 & -X_1\end{smallmatrix}\right)$. But, this is a contradiction, because when $k= \mathbb{R}$, there does not exist any nonzero vectors $x$ such that $x^Tx \le 0$, so the whole diagonal of $X^TX$ must be positive, which is not possible. This shows that there are no Type 2 $k$-involutions in this case. In a similar way, we can show that there are also no Type 4 $k$-involutions in this case.

We know that there is at most one isomorphy class of Type 3 $k$-involutions by Corollary \ref{MaxIsomClassesSo}. Since $A = \left(\begin{smallmatrix}0 & -I_{\frac{n}{2}} \\ I_{\frac{n}{2}} & 0\end{smallmatrix}\right) $induces a Type 3 $k$-involution, then $A$ is a representative of the only Type 3 isomorphy class.
\end{proof}

Unlike the algebraically closed case, we note that in this case, that the maximal number of isomorphy classes do not exist. That is, in Corollary \ref{MaxIsomClassesSo}, for the case where $k = \mathbb{R}$, we have an explicit example where we do not have equality. In fact, given that we have seen that the Type 1 and 3 cases must exist for this group, we actually have the minimal number of isomorphy classes possible.

\subsection{Orthogonal Groups of $\mathbb{F}_q$}

We begin by examining the Type 1 $k$-involutions where $k = \mathbb{F}_q$ and $q = p^h$ for all cases where $p \ge 3$. This is a complete classification of the $k$-involutions when $n$ is odd. We note that for these fields we have $|(k^*)^2| = 2$. So, we will use 1 and $\delta_q$ as representatives of of the distinct field square classes. Based on properties of symmetric matrices over $k = \mathbb{F}_q$, we know that up to congruence, there are two possibilities for $M$: either $M = I_n$ or $M = \left(\begin{smallmatrix}I_{n-1} & 0 \\0 & \delta_q\end{smallmatrix}\right)$.  

\begin{theorem}

Assume that $M = I_n$. Suppose $\theta$ is a Type 1 $k$-involution of $\So(n,\mathbb{F}_q)$. Then $\theta$ is isomorphic to $\Inn_A$ where we can write $A = I_{n-m,m}$ for $0 \le m \le \frac{n}{2}$ or $$A = \left(\begin{array}{cccc}-I_{m-1} & 0 & 0 & 0  \\0 & 1-2\frac{a^2}{\delta_q} & 0 & \frac{2ab}{\delta_q}  \\0 & 0 & I_{n-m-1} & 0  \\0 & \frac{2ab}{\delta_q} & 0 & 1-2\frac{b^2}{\delta_q} \end{array}\right)$$ for $0 \le m \le \frac{n}{2}$, where $\delta_q$ is a nontrivial non-square in $\mathbb{F}_q$ where $a^2+b^2 = \delta_q$ and $a,b \in \mathbb{F}_q$.

Now, assume that $M = \left(\begin{smallmatrix}I_{n-1} & 0 \\0 & \delta_q\end{smallmatrix}\right)$. Suppose $\theta$ is an $k$-involution of $\So(n,\mathbb{F}_q, \beta)$. Then $\theta$ is isomorphic to $\Inn_A$ where we can write $$A = I_{n-m,m} \text{  \hspace{.2cm}  or  \hspace{.2cm}  } A = \left(\begin{array}{cccc}-I_{n-m-1} &   &    \\  & I_{m} &     \\  &   & -1   \end{array}\right)$$ for $0 \le m \le \frac{n}{2}$.

\end{theorem}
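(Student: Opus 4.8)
The plan is to reduce everything to the isomorphy criterion of Theorem~\ref{type1lemSo}, which says that a Type 1 $k$-involution $\Inn_A$ is determined, up to isomorphism, by the pair of congruence classes of the diagonal blocks $X_1,X_2$ in $X^TMX=\left(\begin{smallmatrix}X_1&0\\0&X_2\end{smallmatrix}\right)$, where $X$ diagonalizes $A$ as in Lemma~\ref{Type1ClassSo}; here $X_1$ has size $m=\dim E(A,-1)\le\tfrac n2$ and $X_2$ has size $n-m$, with only the unordered pair mattering when $m=\tfrac n2$. Over $\mathbb{F}_q$ with $q$ odd, an invertible symmetric matrix of size $r$ is congruent to exactly one of $I_r$ or $\diag(I_{r-1},\delta_q)$ according as its determinant is a square or not (this is the content of $\tau_2(r,\mathbb{F}_q)=2$), so each $X_i$ has only two possibilities. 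The governing constraint is a determinant compatibility: since $A\in\oo(n,\mathbb{F}_q,\beta)$ arises from $X^TMX=\diag(X_1,X_2)$, we have $\det X_1\det X_2=(\det X)^2\det M\equiv\det M\pmod{(\mathbb{F}_q^*)^2}$, which pins down which sign-combinations of square classes occur; the two cases of the theorem correspond exactly to the two square classes of $\det M$.

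For $M=I_n$ the determinant $\det M$ is a square, so $X_1$ and $X_2$ lie in the same square class. The ``both square'' case is represented by $X=I$, giving $A=I_{n-m,m}$. For the ``both non-square'' case I would exhibit the stated matrix, whose only nontrivial $2\times2$ block acts on $\Span(e_m,e_n)$ as the Householder reflection $I-\tfrac{2}{\delta_q}vv^T$ across $v=(a,-b)^T$ with $v^Tv=a^2+b^2=\delta_q$; one then checks directly that $A\in\oo(n,\mathbb{F}_q)$, that $A^2=I$, and that $E(A,-1)=\Span(e_1,\dots,e_{m-1},v)$ and $E(A,1)=\Span(e_{m+1},\dots,e_{n-1},v^\perp)$ have Gram matrices $\diag(I_{m-1},\delta_q)$ and $\diag(I_{n-m-1},\delta_q)$, both of non-square determinant. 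By Theorem~\ref{type1lemSo} this realizes the remaining class, so every Type 1 involution is isomorphic to one of the two listed forms.

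For $M=\left(\begin{smallmatrix}I_{n-1}&0\\0&\delta_q\end{smallmatrix}\right)$ the determinant $\det M=\delta_q$ is a non-square, so exactly one of $X_1,X_2$ has non-square determinant. The representative $I_{n-m,m}$ places $e_n$ (the vector of $M$-norm $\delta_q$) inside $E(A,-1)$, giving $X_1=\diag(I_{m-1},\delta_q)$ (non-square) and $X_2=I_{n-m}$ (square); the representative $\diag(-I_{n-m-1},I_m,-1)$ instead places $e_n$ in the $(n-m)$-dimensional eigenspace, and after replacing $A$ by $-A$ to normalize $\dim E(A,-1)\le\tfrac n2$ it realizes the complementary configuration $X_1=I_m$, $X_2=\diag(I_{n-m-1},\delta_q)$. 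Reading off these eigenspace Gram matrices and again invoking Theorem~\ref{type1lemSo} exhausts both admissible configurations and finishes this case.

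The main obstacle is the $M=I_n$, ``both non-square'' representative. First one must know that every element of $\mathbb{F}_q^*$, in particular $\delta_q$, is a sum of two squares; I would obtain this from the standard pigeonhole count that $\{a^2\}$ and $\{\delta_q-b^2\}$ each have $(q+1)/2$ elements and therefore intersect, guaranteeing that the entries $a,b$ exist. Second one must carry out the Gram-matrix computation confirming that \emph{both} eigenspaces land in the non-square class, which is where recognizing the $2\times2$ block as a reflection across a norm-$\delta_q$ vector does the essential work. The rest is bookkeeping: matching the eigenspace data of each proposed $A$ to the congruence invariants $(X_1,X_2)$ of Theorem~\ref{type1lemSo}.
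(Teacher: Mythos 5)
Your proposal is correct and follows essentially the same route as the paper's proof: reduce to the congruence-class invariants $(X_1,X_2)$ of Theorem~\ref{type1lemSo}, use the fact that over $\mathbb{F}_q$ each block is congruent to $I$ or $\diag(I,\delta_q)$ together with the constraint $\det X_1\det X_2\equiv\det M\pmod{(\mathbb{F}_q^*)^2}$, and exhibit an explicit representative for each admissible pair. The only differences are cosmetic: you verify the stated representative directly by recognizing the $2\times 2$ block as a Householder reflection across a vector of norm $\delta_q$ (the paper instead builds $X$ with $X^TX=\diag(I,\delta_q,I,\delta_q)$ and computes $A=X\diag(-I_s,I_t)X^{-1}$), and you supply the pigeonhole argument for $a^2+b^2=\delta_q$, which the paper asserts without proof.
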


\begin{proof}

We will use the equivalent conditions of Lemma \ref{type1lemSo} to prove that the matrices listed above will distinctly be representatives of the isomorphy classes of the $k$-involutions of $\So(n,\mathbb{F}_q)$. For future reference, fix $a, b \in \mathbb{F}_q$ such that $a^2+b^2 = \delta_q$

If $\theta$ is a Type 1 $k$-involution, then by Lemma \ref{Type1ClassSo} we can choose a matrix $A$ such that $\theta = \Inn_A$ and we can write $A = X \left(\begin{smallmatrix}-I_s & 0 \\0 & I_t\end{smallmatrix}\right) X^{-1},$ where $s+t = n$ and $$X^TMX = \left(\begin{array}{cc}X_1 & 0 \\0 & X_2\end{array}\right)$$ must be diagonal, and $X_1$ is an $s \times s$ matrix, and $X_2$ is a $t \times t$ matrix. It is a well known fact that any diagonal matrix over $\mathbb{F}_q$ must be congruent to either $I_n$ or $\left(\begin{smallmatrix}I_{n-1} & 0 \\0 & \delta_q\end{smallmatrix}\right)$ where $\delta_q$ is some fixed non-square in $\mathbb{F}_q$. So, from the equivalent conditions in Theorem \ref{type1lemSo} it is known that $X_1$ and $X_2$ must each be congruent to $I$ or $\left(\begin{smallmatrix}I & 0 \\0 & \delta_q\end{smallmatrix}\right)$ (sizing the matrices appropriately). Further, since $\det(X^TX) = (\det(X))^2$ is a square, we observe that $X_1$ and $X_2$ must be simultaneously congruent to either $I$ or $\left(\begin{smallmatrix}I & 0 \\0 & \delta_q\end{smallmatrix}\right)$ (again, sizing appropriately). 

 Since we are searching for a representative of the congruence class, it can be assumed that  $X^TMX$ is either $I$ or $\left(\begin{smallmatrix}I & 0 & 0 & 0 \\0 & \delta_q & 0 & 0 \\0 & 0 & I & 0 \\0 & 0 & 0 & \delta_q\end{smallmatrix}\right)$. These are the only possibilities, and also they must correspond to distinct isomorphy classes of Type 1 $k$-involutions under the conditions of Theorem \ref{type1lemSo}. 
 
{\bf Case 1}: $\beta$ is the standard dot product, and $M = I$. 

{\bf Subcase 1.1}: $X^TX = I$.

We can let $X = I$, which means $A =  \left(\begin{smallmatrix}-I_s & 0 \\0 & I_t\end{smallmatrix}\right)$ is the representative of the isomorphy class.

{\bf Subcase 1.2}: $X^TX = \left(\begin{smallmatrix}I & 0 & 0 & 0 \\0 & \delta_q & 0 & 0 \\0 & 0 & I & 0 \\0 & 0 & 0 & \delta_q\end{smallmatrix}\right)$.
 
We can let $$X = \left(\begin{array}{cccc}I & 0 & 0 & 0 \\0 & a & 0 & b \\0 & 0 & I & 0 \\0 & -b & 0 & a\end{array}\right).$$ It follows from this that $$A = \left(\begin{array}{cccc}-I_{m-1} & 0 & 0 & 0  \\0 & 1-2\frac{a^2}{\delta_q} & 0 & \frac{2ab}{\delta_q}  \\0 & 0 & I_{n-m-1} & 0  \\0 & \frac{2ab}{\delta_q} & 0 & 1-2\frac{b^2}{\delta_q} \end{array}\right)$$ is a representative of the isomorphy class.
 
 {\bf Case 2}: $\beta$ is such that $M = \left(\begin{smallmatrix}I_{n-1} & 0 \\0 & \delta_q\end{smallmatrix}\right).$ 
 
 {\bf Subcase 2.1}: $X^TMX = M$.
 
In the first case, since we are looking for a representative of our congruence class, we can assume $X^T\left(\begin{smallmatrix}I_{n-1} & 0 \\0 & \delta_q\end{smallmatrix}\right)X = \left(\begin{smallmatrix}I_{n-1} & 0 \\0 & \delta_q\end{smallmatrix}\right)$. This means we can assume $X=I$ choose $A =  \left(\begin{smallmatrix}-I_s & 0 \\0 & I_t\end{smallmatrix}\right)$ as the representative of the isomorphy class.

{\bf Subcase 2.2}: $X^TMX =   \left(\begin{smallmatrix}I_{\frac{n}{2}-1} & 0 & 0 \\0 & \delta_q & 0 \\0 & 0 & I_{\frac{n}{2}}\end{smallmatrix}\right).$

We can choose $X = \left(\begin{smallmatrix}I_{s-1} &   &   &   \\  & 0 &   & 1 \\  &   & I_{t-1} &   \\  & 1 &   & 0\end{smallmatrix}\right)$. This gives representative $A = \left(\begin{smallmatrix}-I_{s-1} &   &    \\  & I_t &     \\  &   & -1   \end{smallmatrix}\right).$
\end{proof}

By counting the number of isomorphy classes  from this Theorem, its clear that if $n$ is odd, then $C_1(n,\mathbb{F}_q,\beta) = n+1,$ and if $n$ is even, then $C_1(n,\mathbb{F}_q,\beta) = n+2.$ 

For the remaining three types of $k$-involutions, we restrict our attention to the case where $\beta$ is the standard dot product. Recall that $n$ must be even. In this case, it is clear that $A = \left(\begin{array}{cc}0 & I_n \\-I_n & 0\end{array}\right)$ will induce a Type 3 $k$-involution, and that $C_3(n, k)= 1$. 

We know that $C_2(n,\mathbb{F}_q) \le 3$ and $C_4(n,\mathbb{F}_q) \le 1$. We will specifically look at the cases where $q = $3, 5, and 7. For these cases, we see that we have existence of both Type 2 and Type 4 $k$-involutions via the matrices in Table \ref{So_Fp}. 

\begin{table}[h] 
\centering
\caption { Type 2 and Type 4 examples for $\So(4,\mathbb{F}_p$) }  \label{So_Fp}
\begin{tabular}[t]{|c||c|c|}
\hline  $k$  & Type 2 & Type 4     \\
\hline $\mathbb{F}_3$ &$ i \left(\begin{smallmatrix}1 & 1 & 0 & 0 \\1 & 2 & 0 & 0 \\0 & 0 & 1 & 1 \\0 & 0 & 1 & 2\end{smallmatrix}\right)$  &  $i\left(\begin{smallmatrix}1 & 2 & 0 & 0 \\1 & 1 & 0 & 0 \\0 & 0 & 1 & 2 \\0 & 0 & 1 & 1\end{smallmatrix}\right) $  \\
\hline $\mathbb{F}_5$ & $\sqrt{2}\left(\begin{smallmatrix}1 & 1 & 0 & 0 \\1 & 4 & 0 & 0 \\0 & 0 & 1 & 1 \\0 & 0 & 1 & 4\end{smallmatrix}\right) $ &  $\sqrt{2}\left(\begin{smallmatrix}1 & 4 & 0 & 0 \\1 & 1 & 0 & 0 \\0 & 0 & 1 & 4 \\0 & 0 & 1 & 1\end{smallmatrix}\right) $  \\
\hline $\mathbb{F}_7$ & $\sqrt{3}\left(\begin{smallmatrix}1 & 3 & 0 & 0 \\3 & 6 & 0 & 0 \\0 & 0 & 1 & 3 \\0 & 0 & 3 & 6\end{smallmatrix}\right) $ &  $\sqrt{3}\left(\begin{smallmatrix}1 & 4 & 0 & 0 \\3 & 1 & 0 & 0 \\0 & 0 & 1 & 4 \\0 & 0 & 3 & 1\end{smallmatrix}\right) $  \\
\hline
\end{tabular}
\end{table}

We note that these examples will all generalize to higher dimensions, so it is clear that for these fields that whenever $n$ is even,  $C_2(n,\mathbb{F}_q), C_4(n,\mathbb{F}_q) \ge 1$. So, for these three specific fields, we know that $C_4(n,\mathbb{F}_q) =1$, and that the number of isomorphy classes of Type 4 $k$-involutions are maximized. But, for $\So(4, \mathbb{F}_p)$ where $p =$ 3, 5, and 7, we have done computations in Maple which use the conditions of Theorem \ref{type2lemSo} that show that$C_2(4, \mathbb{F}_p) = 1$. So, the number of Type 2 isomorphy classes is not maximized in these cases. While we have been unable to prove this up to this point, we believe that this is a pattern that would continue. That is, we have the following conjecture:

\begin{conj}
Suppose that $\So(n,k)$ is a finite orthogonal group and that $n$ is even. Then,  $C_2(n,k)=C_4(n,k) = 1$.
\end{conj}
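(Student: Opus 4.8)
The plan is to treat the two equalities separately, since $C_4=1$ is nearly immediate while $C_2=1$ carries all the difficulty. For $C_4$, Corollary \ref{CorType4So} already gives $C_4(n,\mathbb{F}_q,\beta)\le\tau_1(\mathbb{F}_q)$, and over a finite field of odd characteristic there is a unique nontrivial square class, so $\tau_1(\mathbb{F}_q)=1$. Combined with the explicit Type 4 involutions of Table \ref{So_Fp} (and their block-diagonal extension to every even $n$, which preserves the splitting $\dim E(A,i)=\dim E(A,-i)=\tfrac n2$), this forces $C_4(n,\mathbb{F}_q,\beta)=1$. The same remark disposes of the coarse half of the $C_2$ situation: the factor $\tau_1(\mathbb{F}_q)=1$ means every Type 2 involution uses the same quadratic extension $\mathbb{F}_q[\sqrt\alpha]=\mathbb{F}_{q^2}$, with $\alpha\equiv\delta_q$ the nonsquare class.

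The heart of the argument is to show that all Type 2 involutions collapse to a single isomorphy class. I would first invoke the theorem preceding the Type 3 subsection, which says that two Type 2 involutions are isomorphic over $\oo(n,\mathbb{F}_q,\beta)$ exactly when they are isomorphic, as Type 1 involutions, over $\So(n,\mathbb{F}_{q^2},\beta)$. Theorem \ref{type1lemSo} then reduces this to congruence over $\Gl(\tfrac n2,\mathbb{F}_{q^2})$ of the diagonal block $\widetilde X_1=\tfrac12(X_1+\sqrt\alpha X_2)$ attached to the $-1$-eigenspace (the block for the $+1$-eigenspace is its Galois conjugate, so it carries no extra information, Frobenius preserving square classes). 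Over the finite field $\mathbb{F}_{q^2}$ a nondegenerate symmetric form is classified by its discriminant, so the entire invariant is the square class of $\det\widetilde X_1$ in $\mathbb{F}_{q^2}^{*}$. The key computation is that this class is pinned down by $n$ and $M$ alone: since the two blocks are Galois conjugate, $\det\widetilde X_1\cdot\det\widetilde X_2$ equals the norm $N(\det\widetilde X_1)\in\mathbb{F}_q^{*}$, and expanding the block determinant of $X^TMX=\left(\begin{smallmatrix}X_1&X_2\\X_2&\frac1\alpha X_1\end{smallmatrix}\right)$ against the identity $\det(X^TMX)=\det(X)^2\det M$ gives $N(\det\widetilde X_1)\equiv\alpha^{n/2}\det M\pmod{(\mathbb{F}_q^{*})^2}$. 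Using the elementary fact that $z\in\mathbb{F}_{q^2}^{*}$ is a square if and only if its norm $N(z)$ is a square in $\mathbb{F}_q^{*}$, the square class of $\det\widetilde X_1$ depends only on whether $\alpha^{n/2}\det M$ is a square. As this is independent of the chosen involution, every Type 2 involution lands in one congruence class, so at most one isomorphy class exists; together with existence this yields $C_2=1$.

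I expect the existence of a Type 2 involution in each even dimension (and for both congruence classes of $M$) to be the genuine obstacle, rather than the collapse just described: the determinant/norm argument forces ``at most one,'' but says nothing about nonemptiness. I would establish existence constructively, assembling $A$ from $\tfrac n2$ two-dimensional blocks, each a $\sqrt\alpha$-multiple of a symmetric orthogonal matrix contributing one eigenvector to each of $E(A,1)$ and $E(A,-1)$, so that the $\sqrt\alpha$-conjugation symmetry of the eigenvector lemma feeding Lemma \ref{Type2ClassSo} holds and $\sqrt\alpha\notin\mathbb{F}_q$ is genuinely needed; the entries of Table \ref{So_Fp} are the $n=4$ instance of this pattern. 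A secondary point to check carefully is that the passage through $\mathbb{F}_{q^2}$ respects the requirement that the entries of $A$ be $\mathbb{F}_q$-multiples of $\sqrt\alpha$, so that the Type 1 involution over $\mathbb{F}_{q^2}$ really descends to a Type 2 involution over $\mathbb{F}_q$; this descent is exactly what guarantees that the two Galois-conjugate diagonal blocks take the stated form, and it is where the hypotheses of Lemma \ref{Type2ClassSo} must be verified rather than assumed.
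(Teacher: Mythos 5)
You should first be aware that the paper does not prove this statement: it appears only as a conjecture, supported by the upper bounds of Corollary \ref{CorType4So} (and its Type 2 analogue) and by Maple computations for $\So(4,\mathbb{F}_p)$, $p=3,5,7$, so there is no proof of record to compare yours against. Your argument for the half $C_2(n,\mathbb{F}_q)=1$ looks correct to me, and it is genuinely more than the paper achieves. The chain of reductions is legitimate: the theorem following Theorem \ref{type2lemSo} converts isomorphy of Type 2 involutions over $\oo(n,\mathbb{F}_q,\beta)$ into isomorphy, as Type 1 involutions, over $\So(n,\mathbb{F}_{q^2},\beta)$; Theorem \ref{type1lemSo} converts that into congruence of the blocks over $\Gl(\tfrac n2,\mathbb{F}_{q^2})$; over a finite field congruence is detected by the discriminant; and your identity $N(\det\widetilde X_1)\equiv\alpha^{n/2}\det M\pmod{(\mathbb{F}_q^*)^2}$, combined with the fact that $z\in\mathbb{F}_{q^2}^*$ is a square exactly when $N(z)\in(\mathbb{F}_q^*)^2$ (both conditions read $z^{(q^2-1)/2}=1$), pins that discriminant independently of the involution. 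Two points must still be written out, but both are easy: that the two involutions may be taken with the same $\sqrt\alpha$ (clear, since $\mathbb{F}_q^*$ has a single nonsquare class), and existence, which follows from $A=\sqrt{\alpha}\,\diag(C_0,\dots,C_0)$ with $C_0=\left(\begin{smallmatrix}a&b\\b&-a\end{smallmatrix}\right)$ and $a^2+b^2=\alpha^{-1}$, solvable because every element of $\mathbb{F}_q$ is a sum of two squares.

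The Type 4 half of your proposal, however, has a fatal gap, and in fact that half of the conjecture is false, so it cannot be repaired. If $\Inn_A$ is Type 4, then $A=\sqrt{\alpha}C$ with $C\in\Gl(n,\mathbb{F}_q)$ and $\alpha$ a nonsquare, and the conditions $A^TA=I$, $A^2=-I$ force $C^TC=\alpha^{-1}I$ and $C^2=-\alpha^{-1}I$, hence $C^T=-C$. A nonsingular skew-symmetric matrix over a field of odd characteristic satisfies $\det C=\operatorname{Pf}(C)^2\in(\mathbb{F}_q^*)^2$. On the other hand, $C$ is diagonalizable over $\overline{\mathbb{F}_q}$ with eigenvalues $\pm\sqrt{-\alpha^{-1}}$ of equal multiplicity $\tfrac n2$ (its trace is zero), so $\det C=(-1)^{n/2}\bigl(-\alpha^{-1}\bigr)^{n/2}=\alpha^{-n/2}$. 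When $n\equiv2\pmod 4$ this lies in the square class of $\alpha$, a nonsquare, a contradiction. Hence $C_4(n,\mathbb{F}_q)=0$ for every odd $q$ when $n\equiv 2\pmod 4$ (for instance $\So(6,\mathbb{F}_3)$ has no Type 4 involutions at all), and $C_4(n,\mathbb{F}_q)=1$ can hold only when $4\mid n$. This is exactly where your proof breaks: the asserted ``block-diagonal extension to every even $n$'' does not exist, because a $2\times 2$ Type 4 block would require $b^2=\alpha^{-1}$ with $\alpha$ a nonsquare, so Type 4 examples can only be tiled in dimensions divisible by $4$. I would also caution against leaning on Table \ref{So_Fp} at all: as printed, most of its entries fail $A^2=\pm I$ or orthogonality (e.g.\ the $\mathbb{F}_5$ Type 2 matrix squares to $-I$, and the Type 4 entries square to non-scalar matrices), although correct dimension-$4$ Type 4 examples do exist, such as $A=\sqrt{\alpha}\left(\begin{smallmatrix}0&D\\-D&0\end{smallmatrix}\right)$ with $D=\left(\begin{smallmatrix}a&b\\b&-a\end{smallmatrix}\right)$, $a^2+b^2=\alpha^{-1}$. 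The statement your methods actually prove is: $C_2(n,\mathbb{F}_q)=1$ for all even $n$, while $C_4(n,\mathbb{F}_q)=1$ if $4\mid n$ and $C_4(n,\mathbb{F}_q)=0$ if $n\equiv2\pmod 4$; you should prove that corrected statement rather than the conjecture as written.
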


\subsection{$p$-adic numbers}

We now turn our attention to the case where $k= \mathbb{Q}_p$. We will assume $M = I_n$. We show a classification of the possible isomorphy classes of the Type 1 $k$-involutions of $\So(n, \mathbb{Q}_p)$ where $p >2$, using Lemma \ref{type1lemSo}. Note that if $n$ is odd and $n \ne 3$, then this all of the possible isomorphy classes of the $k$-involutions of $\So(n, \mathbb{Q}_p)$. We note that we say ``possible" because we don't show existence, but rather we use our characterization of Type 1 $k$-involutions to show which classes may exist. It still remains to be shown which of these possible classes does exist.

We first state a result from  \cite{Jones} about symmetric matrices with entries from the p-adic numbers. 

\begin{lem} 
\label{padicSo} 
Symmetric matrices $M_1$ and $M_2$ with entries in $\mathbb{Q}\sb{p}$ are congruent if and only if 
$$\det(M_1)=\gamma^{2}\det(M_2)
\hspace{1 em} and \hspace{1 em} c\sb{p}(M_1)=c\sb{p}(M_2) $$
where $c_p(M)$ denotes the Hasse symbol of matrix $M$.
\end{lem}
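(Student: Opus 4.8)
The plan is to translate congruence of symmetric matrices into equivalence of the associated quadratic forms over $\mathbb{Q}_p$ and then invoke the classical classification of such forms. Since $M_1$ and $M_2$ are symmetric and $\chr(\mathbb{Q}_p)=0\ne 2$, each is congruent to a diagonal matrix, so I would first write $M_j \cong \langle a_1^{(j)}, \dots, a_n^{(j)}\rangle$ with $a_i^{(j)} \in \mathbb{Q}_p^*$ (non-degeneracy is forced by invertibility, as in the application where these arise as $X^TMX$). Congruence of the $M_j$ is equivalent to equivalence of these diagonal forms, so it suffices to work with diagonal forms throughout.

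For the necessity direction, suppose $M_1 = Q^T M_2 Q$ with $Q \in \Gl(n,\mathbb{Q}_p)$. Taking determinants gives $\det(M_1) = (\det Q)^2\det(M_2)$, which is exactly the discriminant condition with $\gamma = \det Q$. The subtle point is that the Hasse symbol $c_p(M) = \prod_{i<j}(a_i,a_j)_p$, defined via Hilbert symbols $(\cdot,\cdot)_p$ of a diagonalization, must be shown to be a congruence invariant, i.e.\ independent of the chosen diagonalization. I would establish this using Witt's chain equivalence theorem, which says any two diagonalizations of a form are linked by a chain of elementary binary transformations; combined with the bilinearity and symmetry of the Hilbert symbol, a direct check shows $c_p$ is unchanged under each such transformation.

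For the sufficiency direction I would argue by induction on $n$. The base cases $n=1$ (the form is pinned down by its discriminant alone) and $n=2$ (a binary form $\langle a,b\rangle$ is determined up to equivalence by the pair $(ab,(a,b)_p)$) rest on the explicit structure of $\mathbb{Q}_p^*/(\mathbb{Q}_p^*)^2$, which by Hensel's lemma has order $4$ for odd $p$ and order $8$ for $p=2$. For the inductive step I would show that two forms of equal dimension, discriminant, and Hasse symbol represent a common value $c \in \mathbb{Q}_p^*$; splitting off $\langle c\rangle$ from each by Witt cancellation reduces to forms of dimension $n-1$ whose discriminants and Hasse symbols necessarily agree (using the multiplicative behaviour of $c_p$ under orthogonal sums), and the induction hypothesis then finishes the argument.

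The main obstacle is the number-theoretic input underlying both the invariance and the completeness of the Hasse symbol: the bilinearity and nondegeneracy of the Hilbert symbol over $\mathbb{Q}_p$, the local square theorem describing $\mathbb{Q}_p^*/(\mathbb{Q}_p^*)^2$, and the representation criterion that lets two forms with equal invariants share a represented value. These facts guarantee that dimension, discriminant, and Hasse symbol form a complete set of invariants; the matrix-theoretic packaging in the statement is then immediate. Since the result is classical, I would ultimately cite \cite{Jones} rather than reproduce this entire chain in full.
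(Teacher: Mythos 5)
Your proposal is correct, but it does substantially more than the paper, which offers no proof at all: Lemma \ref{padicSo} is simply quoted as a known result from \cite{Jones} and used as a black box in Corollary \ref{padicCor}. What you have sketched is the standard classical proof of the classification of nondegenerate quadratic forms over $\mathbb{Q}_p$ by rank, discriminant (mod squares), and Hasse invariant: diagonalize (valid since $\chr(\mathbb{Q}_p) \ne 2$); get the determinant condition from $\det(Q^TM_2Q) = (\det Q)^2 \det(M_2)$; prove well-definedness of $c_p$ via Witt's chain equivalence theorem plus bilinearity of the Hilbert symbol; and prove sufficiency by induction, using the fact that forms with equal invariants represent a common value and then cancelling via Witt's theorem. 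All of these steps are sound and are essentially the argument found in \cite{Jones} (or in Serre's and Cassels' treatments). Two small points are worth flagging. First, the lemma as stated in the paper is slightly loose: it tacitly assumes $M_1$ and $M_2$ are invertible and of the same size (otherwise determinants can vanish and the invariants are not even defined); you correctly note that nondegeneracy is forced in the application, where the matrices arise as $X^TMX$ with $X \in \Gl(n,k)$ and $M$ nondegenerate. Second, your decision to ultimately cite \cite{Jones} rather than reproduce the full chain of number-theoretic prerequisites is exactly the choice the authors made, so your write-up and the paper agree on where the mathematical weight should rest; the difference is only that you exhibit the proof skeleton, which the paper omits entirely.
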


We use this to prove a result that is an extension of Theorem \ref{type1lemSo} in the case that $k= \mathbb{Q}_p$.

\begin{cor}
\label{padicCor}
Assume the hypotheses of Theorem \ref{type1lemSo}. Statements $(i)$ through $(v)$ of Theorem \ref{type1lemSo} are equivalent the following condition:

There exists some $\gamma \in \mathbb{Q}_p$ such that 
$$\det(X_1) =\gamma^2\det(Y_1),   \hspace{.2cm}  \det(X_2) =\gamma^2\det(Y_2),   \hspace{.2cm} c_p(X_1)=c_p(Y_1),   \hspace{.2cm} \& \hspace{.2cm} c_p(X_2)=c_p(Y_2)$$ or 
$$\det(X_1) =\gamma^2\det(Y_2),   \hspace{.2cm}  \det(X_2) =\gamma^2\det(Y_1),   \hspace{.2cm} c_p(X_1)=c_p(Y_2),   \hspace{.2cm} \& \hspace{.2cm} c_p(X_2)=c_p(Y_1).$$
\end{cor}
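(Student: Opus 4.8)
The plan is to translate condition $(iii)$ of Theorem \ref{type1lemSo}, which is phrased in terms of congruence of the diagonal blocks, into the numerical invariants supplied by Lemma \ref{padicSo}. Since conditions $(i)$--$(v)$ of Theorem \ref{type1lemSo} are already known to be equivalent, it suffices to show that the displayed $p$-adic condition is equivalent to condition $(iii)$, namely that either $X_1$ is congruent to $Y_1$ and $X_2$ is congruent to $Y_2$, or $X_1$ is congruent to $Y_2$ and $X_2$ is congruent to $Y_1$, all congruences being taken over the appropriate $\Gl(\cdot,\mathbb{Q}_p)$.

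First I would prove the easy direction. Assume the displayed condition holds, say in its first alternative, so that $\det(X_1) = \gamma^2\det(Y_1)$ and $c_p(X_1) = c_p(Y_1)$. Then $\det(X_1)$ and $\det(Y_1)$ lie in the same square class of $\mathbb{Q}_p^*$ and the Hasse symbols agree, so Lemma \ref{padicSo} gives that $X_1$ is congruent to $Y_1$; the identical argument gives that $X_2$ is congruent to $Y_2$. This is exactly the first alternative of condition $(iii)$, and the second alternative is obtained the same way with the roles of $Y_1$ and $Y_2$ exchanged. Hence the displayed condition implies $(iii)$, and therefore $(i)$--$(v)$.

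For the converse I would start from condition $(iii)$. In its first alternative, applying Lemma \ref{padicSo} to each of the two congruences yields scalars $\gamma_1,\gamma_2 \in \mathbb{Q}_p^*$ with $\det(X_1) = \gamma_1^2\det(Y_1)$ and $\det(X_2) = \gamma_2^2\det(Y_2)$, together with the two Hasse-symbol equalities. The one subtlety is that this produces two a priori different scalars, whereas the corollary asks for a single $\gamma$; this is the step I expect to require the most care. I would resolve it using the freedom in the choice of eigenbasis: replacing a column $x_i$ of $X$ lying in the first block by $c\,x_i$ leaves $A = X\,\diag(-I,I)\,X^{-1}$ unchanged, since the scaling factor commutes with $\diag(-I,I)$ and cancels against $X^{-1}$; it keeps $X^TMX$ block-diagonal, multiplies $\det(X_1)$ by the square $c^2$, and — because scaling a single diagonal entry by a square does not change its square class — leaves both $c_p(X_1)$ and the congruence class of $X_1$ untouched. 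Rescaling one column in each block therefore lets me force $\det(X_1) = \det(Y_1)$ and $\det(X_2) = \det(Y_2)$ simultaneously, so that $\gamma = 1$ serves as the common scalar while the Hasse equalities are unaffected. The swapped alternative of $(iii)$, which by the proof of Theorem \ref{type1lemSo} can only occur when $m = \tfrac{n}{2}$ and hence all four blocks are $\tfrac{n}{2}\times\tfrac{n}{2}$, is handled identically and yields the second displayed alternative. Combining the two directions gives the stated equivalence.
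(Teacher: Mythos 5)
Your proposal is correct, and its core is the same as the paper's proof: the paper's entire argument is the one-sentence remark that the displayed condition is condition $(iii)$ of Theorem \ref{type1lemSo} translated through Lemma \ref{padicSo}, which is exactly the content of your two directions. Where you differ is that you noticed --- and the paper does not --- that applying Lemma \ref{padicSo} to the two block congruences in $(iii)$ only produces two independent scalars $\gamma_1,\gamma_2$, whereas the corollary asks for a single $\gamma$. Your caution is justified, since for the fixed $X,Y$ of the hypotheses the single-$\gamma$ statement can genuinely fail while $(i)$--$(v)$ hold: with $M=I_4$ take $X=I_4$ and $Y=\diag(2,2,3,3)$, so that $A=B=\diag(-1,-1,1,1)$, $X_1=X_2=I_2$, $Y_1=4I_2$, $Y_2=9I_2$; then $\det(X_1)/\det(Y_1)=1/16$ and $\det(X_2)/\det(Y_2)=1/81$ are distinct squares, so no common $\gamma$ exists even though $\theta=\phi$. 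Your column-rescaling repair is sound (rescaling a column by $c$ fixes $A$, keeps $X^TMX$ block diagonal, multiplies $\det(X_1)$ by $c^2$, and changes neither Hasse symbol nor congruence class), but you should state its one caveat explicitly: it replaces $X_1,X_2$ by rescaled, congruent matrices, so what it actually proves is that the eigenvector matrices can be \emph{chosen} so that a common $\gamma$ works; equivalently, the corollary is true when the determinant conditions are read as equalities of square classes with independent scalars, which is evidently the reading the paper's one-line proof has in mind.
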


\begin{proof}
We note that this condition is equivalent to $(iii)$ of Theorem \ref{type1lemSo} by Lemma \ref{padicSo}. 
\end{proof}

Corollary \ref{padicCor} gave us conditions on the square class of the determinant and the Hasse symbol to classify the isomorphy classes for $\So(n, \mathbb{Q}_p)$. Using these conditions, we have classified all of the possible isomorphy classes of Type 1 $k$-involutions based on what the values of $X_1$ and $X_2$ would be for a representative of the congruency class in Tables \ref{so-Qp-table1} and \ref{so-Qp-table2}. We note that each isomorphy class is determined by the triple $(\det(X_1) = \det(X_2), c_p(X_1),c_p(X_2)).$ To show that each of these possible congruency classes exists, one would need to find a matrix $X$ such that $X^TX = \left(\begin{array}{cc}X_1 & 0 \\0 & X_2\end{array}\right).$ This would then determine $A$. In the case where $-1 \not \in (\mathbb{Q}_p^*)^2$, this will always be the case. To see that this is true, note that $X^TX = \left(\begin{array}{cc}X_1 & 0 \\0 & X_2\end{array}\right)$ will always be a symmetric matrix with a determinant that is in the same square class as 1. When $-1 \not \in(\mathbb{Q}_p^*)^2$, all such matrices are such that $c_p\left(\begin{array}{cc}X_1 & 0 \\0 & X_2\end{array}\right) = 1$ is the case. So, $\left(\begin{array}{cc}X_1 & 0 \\0 & X_2\end{array}\right)$ will be congruent to $I_n$, which gives us the existence of $X$ such that $X^TX = \left(\begin{array}{cc}X_1 & 0 \\0 & X_2\end{array}\right)$. In the case where $-1 \in (\mathbb{Q}_p^*)^2$, then it is possible that $c_p\left(\begin{array}{cc}X_1 & 0 \\0 & X_2\end{array}\right) = -1$. For these cases, it is not clear (to the authors) that there exists $X$ such that $X^TX = \left(\begin{array}{cc}X_1 & 0 \\0 & X_2\end{array}\right)$.

\begin{table}[h] 
\centering
\caption {$X_1$ and $X_2$ values when $k= \mathbb{Q}_p$, $p>2$, and $-1\in (\mathbb{Q}_p^*)^2$}  \label{so-Qp-table1}
\begin{tabular}[t]{|c|c|c|c|c|c|c|c|}
\hline  $X_1$ &$X_2$ & $\det(X_1)$ and $\det(X_2)$ &$c_p(X_1)$ &$c_p(X_2)$ \\
\hline $I_n$ & $I_n$ & 1 & 1& 1 \\
\hline $I_n$ & \tiny{$ \left(\begin{array}{cccc}I_{n-3} & 0 & 0 & 0 \\0 & p & 0 & 0 \\0 & 0 & N_p & 0 \\0 & 0 & 0 & pN_p\end{array}\right)$} & 1 &  1& -1 \\
\hline \tiny{$ \left(\begin{array}{cc}I_{n-1} & 0 \\0 & p\end{array}\right)$} & \tiny{$  \left(\begin{array}{cc}I_{n-1} & 0 \\0 & p\end{array}\right)$} & $p$ &  1& 1 \\
\hline \tiny{$ \left(\begin{array}{cc}I_{n-1} & 0 \\0 & p\end{array}\right)$} & \tiny{$ \left(\begin{array}{ccc}I_{n-2} & 0 & 0 \\0 & N_p & 0 \\0 & 0 & pN_p\end{array}\right)$} & $p$ & 1& -1 \\
\hline \tiny{$ \left(\begin{array}{cc}I_{n-1} & 0 \\0 & N_p\end{array}\right)$} & \tiny{$  \left(\begin{array}{cc}I_{n-1} & 0 \\0 & N_p\end{array}\right)$} & $N_p$ &  1& 1 \\
\hline \tiny{$ \left(\begin{array}{cc}I_{n-1} & 0 \\0 & N_p\end{array}\right)$} & \tiny{$ \left(\begin{array}{ccc}I_{n-2} & 0 & 0 \\0 & p & 0 \\0 & 0 & pN_p\end{array}\right)$}  & $N_p$& 1& -1 \\
\hline \tiny{$ \left(\begin{array}{cc}I_{n-1} & 0 \\0 & pN_p\end{array}\right)$} & \tiny{$  \left(\begin{array}{cc}I_{n-1} & 0 \\0 & pN_p\end{array}\right)$} & $pN_p$ & 1& 1 \\
\hline \tiny{$ \left(\begin{array}{cc}I_{n-1} & 0 \\0 & pN_p\end{array}\right)$} & \tiny{$ \left(\begin{array}{ccc}I_{n-2} & 0 & 0 \\0 & p & 0 \\0 & 0 & N_p\end{array}\right)$} &  $pN_p$& 1& -1 \\
\hline \tiny{$ \left(\begin{array}{ccc}I_{n-2} & 0 & 0 \\0 & p & 0 \\0 & 0 & N_p\end{array}\right)$} & \tiny{$ \left(\begin{array}{ccc}I_{n-2} & 0 & 0 \\0 & p & 0 \\0 & 0 & N_p\end{array}\right)$}  & $pN_p$& -1& -1 \\
\hline \tiny{$ \left(\begin{array}{ccc}I_{n-2} & 0 & 0 \\0 & p & 0 \\0 & 0 & pN_p\end{array}\right)$} & \tiny{$ \left(\begin{array}{ccc}I_{n-2} & 0 & 0 \\0 & p & 0 \\0 & 0 & pN_p\end{array}\right)$}  & $N_p$& -1& -1 \\
\hline \tiny{$ \left(\begin{array}{ccc}I_{n-2} & 0 & 0 \\0 & N_p & 0 \\0 & 0 & pN_p\end{array}\right)$} & \tiny{$ \left(\begin{array}{ccc}I_{n-2} & 0 & 0 \\0 & N_p & 0 \\0 & 0 & pN_p\end{array}\right)$} &  $p$& -1& -1 \\
\hline \tiny{$  \left(\begin{array}{cccc}I_{n-3} & 0 & 0 & 0 \\0 & p & 0 & 0 \\0 & 0 & N_p & 0 \\0 & 0 & 0 & pN_p\end{array}\right)$} & \tiny{$\left(\begin{array}{cccc}I_{n-3} & 0 & 0 & 0 \\0 & p & 0 & 0 \\0 & 0 & N_p & 0 \\0 & 0 & 0 & pN_p\end{array}\right)$ }& $1$ &  -1& -1 \\
\hline
\end{tabular}
\end{table}

\begin{table}[h] 
\centering
\caption {$X_1$ and $X_2$ values when $k= \mathbb{Q}_p$, $p>2$ and $-1\not \in(\mathbb{Q}_p^*)^2$}  \label{so-Qp-table2}
\begin{tabular}[t]{|c|c|c|c|c|c|c|c|}
\hline  $X_1$ &$X_2$ & $\det(X_1)$ and $\det(X_2)$ &$c_p(X_1)$ &$c_p(X_2)$ \\
\hline $I_n$ & $I_n$ & 1 & 1& 1 \\
\hline $ \left(\begin{array}{cc}I_{n-1} & 0 \\0 & p\end{array}\right)$ & $  \left(\begin{array}{cc}I_{n-1} & 0 \\0 & p\end{array}\right)$ & $p$ &  -1& -1 \\
\hline $ \left(\begin{array}{cc}I_{n-1} & 0 \\0 & p\end{array}\right)$ & $ \left(\begin{array}{ccc}I_{n-2} & 0 & 0 \\0 & N_p & 0 \\0 & 0 & pN_p\end{array}\right)$ & $p$ &- 1& 1 \\
\hline $ \left(\begin{array}{cc}I_{n-1} & 0 \\0 & N_p\end{array}\right)$ & $  \left(\begin{array}{cc}I_{n-1} & 0 \\0 & N_p\end{array}\right)$ & $N_p$ &  1& 1 \\
\hline $ \left(\begin{array}{cc}I_{n-1} & 0 \\0 & pN_p\end{array}\right)$ & $  \left(\begin{array}{cc}I_{n-1} & 0 \\0 & pN_p\end{array}\right)$ & $pN_p$ & -1& -1 \\
\hline $ \left(\begin{array}{cc}I_{n-1} & 0 \\0 & pN_p\end{array}\right)$ & $ \left(\begin{array}{ccc}I_{n-2} & 0 & 0 \\0 & p & 0 \\0 & 0 & N_p\end{array}\right)$ &  $pN_p$& -1& 1 \\
\hline $ \left(\begin{array}{ccc}I_{n-2} & 0 & 0 \\0 & p & 0 \\0 & 0 & N_p\end{array}\right)$ & $ \left(\begin{array}{ccc}I_{n-2} & 0 & 0 \\0 & p & 0 \\0 & 0 & N_p\end{array}\right)$  & $pN_p$& 1& 1 \\
\hline $ \left(\begin{array}{ccc}I_{n-2} & 0 & 0 \\0 & N_p & 0 \\0 & 0 & pN_p\end{array}\right)$ & $ \left(\begin{array}{ccc}I_{n-2} & 0 & 0 \\0 & N_p & 0 \\0 & 0 & pN_p\end{array}\right)$ &  $p$& 1& 1 \\
\hline
\end{tabular}
\end{table}

We now assume that $p = 2$, and we construct a classification of the Type 1 $k$-involutions. We again note that if $n$ is odd and $n \ne 3$, then this is a complete classification. We see that $\pm 1$, $\pm 2$, $\pm 3$ and $\pm 6$ are representatives for all of the the distinct square classes of $(\mathbb{Q}_2^*)^2$. For this case, we have not constructed tables with complete classifications of the two sets of isomorphy classes. Instead, we have constructed a table, Table \ref{so-Qp-table5}, where there is a diagonal matrix over $\mathbb{Q}_2$ for each possible pair of determinant square class and value of Hasse symbol. A potential isomorphy class is determined by choosing for $X_1$ and $X_2$ any pair of matrices on this table where the two given matrices have determinants in the same  square class. So, given the different possible Hasse symbol values, there are at most 24 isomorphy classes of Type 1 $k$-involutions. As in some of the previous cases, it is not immediately clear that there does or does not exist a matrix $X$ in each of these cases such that $X^TX =  \left(\begin{array}{cc}X_1 & 0 \\0 & X_2\end{array}\right).$

\begin{table}[h] 
\centering
\caption { $X_1$ and $X_2$ values when $k= \mathbb{Q}_2$ }  \label{so-Qp-table5}
\begin{tabular}[t]{|c|c|c|}
\hline  $\det(Y)$ square class  & $c_f(Y) = 1$  &$c_p(Y) = -1$  \\
\hline $1$  & $ I_n$ & $ \left(\begin{array}{cccc}I_{n-3} & 0 & 0 & 0 \\0 & -2 & 0 & 0 \\0 & 0 & 3 & 0 \\0 & 0 & 0 & -6\end{array}\right)$  \\
\hline $-1$ & $\left(\begin{array}{ccc}I_{n-2} & 0 & 0 \\0 & 2 & 0 \\0 & 0 & -2\end{array}\right)$ &  $ \left(\begin{array}{cc}I_{n-1} & 0 \\0 & -1\end{array}\right)$ \\
\hline $ 2$  & $\left(\begin{array}{ccc}I_{n-2} & 0 & 0 \\0 & -1 & 0 \\0 & 0 & -2\end{array}\right)$ & $  \left(\begin{array}{cc}I_{n-1} & 0 \\0 & 2\end{array}\right)$ \\
\hline $ -2$  & $  \left(\begin{array}{cc}I_{n-1} & 0 \\0 & -2\end{array}\right)$ &  $ \left(\begin{array}{cccc}I_{n-3} & 0 & 0 & 0 \\0 & -1 & 0 & 0 \\0 & 0 & -3 & 0 \\0 & 0 & 0 & -6\end{array}\right)$ \\
\hline $ 3$  & $  \left(\begin{array}{cc}I_{n-1} & 0 \\0 & 3\end{array}\right)$ & $\left(\begin{array}{ccc}I_{n-2} & 0 & 0 \\0 & 2 & 0 \\0 & 0 & 6\end{array}\right)$ \\
\hline $ -3$  & $\left(\begin{array}{ccc}I_{n-2} & 0 & 0 \\0 & -1 & 0 \\0 & 0 & -3\end{array}\right)$ & $  \left(\begin{array}{cc}I_{n-1} & 0 \\0 & -3\end{array}\right)$ \\
\hline $ 6$  & $  \left(\begin{array}{cc}I_{n-1} & 0 \\0 & 6\end{array}\right)$ & $\left(\begin{array}{ccc}I_{n-2} & 0 & 0 \\0 & 2 & 0 \\0 & 0 & 3\end{array}\right)$ \\
\hline $ -6$  & $\left(\begin{array}{ccc}I_{n-2} & 0 & 0 \\0 & -1 & 0 \\0 & 0 & 6\end{array}\right)$ & $  \left(\begin{array}{cc}I_{n-1} & 0 \\0 & -6\end{array}\right)$ \\
\hline
\end{tabular}
\end{table}

%
%

\clearpage

\end{document}